\documentclass[11pt]{article}
\usepackage{enumerate}
\usepackage[OT1]{fontenc}
\usepackage[usenames]{color}
\usepackage{smile}
\usepackage[colorlinks,
linkcolor=red,
anchorcolor=blue,
citecolor=blue
]{hyperref}            
\usepackage{mathrsfs,fullpage}
\usepackage[protrusion=true, expansion=true]{microtype}
\usepackage{float,subfigure,caption,adjustbox,cancel,multirow,framed,xcolor}
\usepackage{amsfonts,amsmath,amssymb,amsthm,url,xspace,mathtools}
\usepackage{tikz}
\usepackage{verbatim,comment}
\usetikzlibrary{arrows,shapes}
\usepackage{authblk}
\usepackage[bottom]{footmisc}
\usepackage{lscape,diagbox}
\usepackage{algorithm}
\usepackage{algorithmicx,algpseudocode}
\usepackage[title]{appendix}
\ifx\counterwithout\undefined\usepackage{chngcntr}\fi
\counterwithout{equation}{section}

\usepackage{math}
\usepackage{paper-macros}
\usepackage{xurl,rotating,nicefrac}
\allowdisplaybreaks[1]
\mathtoolsset{showonlyrefs=true}
\allowdisplaybreaks

\usepackage{alphalph}
\algrenewcommand{\algorithmiccomment}[1]{\hfill$\blacktriangleright$ #1}
\algnewcommand{\LongComment}[1]{\hfill$\triangleright$ #1}

\newcommand{\papertitle}{TR-SSQP: A Trust-Region Method for Constrained Stochastic Optimization under Heavy-Tailed Noise}
\title{\papertitle}
\author[1]{Haoxuan Wang}
\author[2]{Yuchen Fang}
\author[1]{Sen Na}

\affil[1]{School of Industrial and Systems Engineering, Georgia Institute of Technology}
\affil[2]{Department of Mathematics, University of California, Berkeley}
\date{}

\begin{document}

\maketitle

\begin{abstract}

We consider stochastic nonlinear optimization problems with deterministic equality constraints. While unconstrained stochastic optimization is well understood, the interplay between optimality and feasibility in the constrained setting poses significant challenges. Moreover, existing theoretical guarantees for constrained stochastic methods predominantly rely on bounded-variance assumptions, leaving the heavy-tailed noise regime largely unexplored. To address this gap, we propose a novel trust-region method within the stochastic sequential quadratic programming framework, termed TR-SSQP. Our method employs a normal-tangential decomposition in the step computation to balance optimality and feasibility. In addition, we incorporate a normalization mechanism in the design of the trust-region radius, together with Polyak momentum for gradient estimation, ensuring stable updates without gradient clipping. When the trust-region radius and the momentum parameter decay at appropriate rates, we establish global almost-sure convergence of the method. To the best of our knowledge, this is the first \textit{asymptotic} convergence result for constrained stochastic optimization under heavy-tailed noise. We demonstrate the promising performance of the proposed method through extensive numerical experiments, including comparisons among its variants and~with~existing constrained stochastic optimization methods.

\end{abstract}

\section{Introduction}\label{sec:intro}

We consider stochastic optimization problems with deterministic equality constraints of the form:$\quad$
\begin{equation}\label{Intro_StoProb}
\min_{\bx\in\mR^d}\;f(\bx) = \mE_{\xi\sim \mP}[F(\bx;\xi)],\quad\;\;\;\text{s.t.}\quad\; c(\bx)=\0,
\end{equation}
where $f:\mR^d\to\mR$ is the stochastic objective, $F(\cdot;\xi):\mR^d\to\mR$ denotes its noisy realization, and $c:\mR^d\to\mR^m$ encodes deterministic equality constraints. Problems of the form \eqref{Intro_StoProb} arise in a broad~range of applications, including optimal control \citep{Betts2010Practical}, portfolio optimization \citep{Cakmak2005Portfolio}, multistage stochastic programming \citep{Shapiro2021Lectures}, network design \citep{Santoso2005stochastic}, and physics-informed machine learning \citep{Cuomo2022Scientific}.

Stochastic optimization has been extensively studied in unconstrained settings, while constrained stochastic optimization is fundamentally more challenging: one must simultaneously reduce the objective and control feasibility violations throughout the iterative process. This coupling between optimality and feasibility introduces challenges in both algorithm design and theoretical analysis, particularly when the objective gradient is corrupted by noise. Among existing approaches, such as stochastic penalty methods \citep{Wang2017Penalty}, stochastic augmented Lagrangian methods \citep{Shi2026Momentum}, and stochastic sequential quadratic programming (SSQP) methods, the SSQP framework is especially attractive. It leverages local quadratic approximations of the objective and linear approximations of constraints, thereby inheriting the strong practical performance of deterministic SQP methods for nonlinear constrained problems \citep{Bertsekas2014Constrained,Nocedal2006Numerical}. Motivated by these advantages, several SSQP algorithms with global convergence guarantees have recently been developed for Problem \eqref{Intro_StoProb} \citep{Berahas2021Sequential,Berahas2024Stochastic,Curtis2024Stochastic,Na2022adaptive,Na2023Inequality,Fang2024Fully,Fang2024Trust,Fang2026Trust,Fang2026High,Berahas2025Sequential, Gao2025Online}.

Despite this progress, the existing convergence theory for SSQP methods falls into two categories, each with a significant limitation. The first assumes stochastic gradient estimates to be unbiased with bounded variance \citep{Berahas2021Sequential,Berahas2024Stochastic,Curtis2024Stochastic,Fang2024Fully}. The second removes the bounded-variance assumption, but compensates by requiring an increasing batch size together with objective function value estimates \citep{Na2022adaptive,Na2023Inequality,Fang2024Trust,Fang2026Trust,Berahas2025Sequential,Fang2026High}. The bounded-variance assumption has been increasingly recognized as restrictive in modern large-scale learning tasks \citep{Battash2024Revisiting,Zhang2020Adaptive,Ahn2024Linear,Garg2021Proximal}, where gradient noise commonly exhibits heavy-tailed behavior, meaning its variance may be infinite. This heavy-tailed regime, where one can assume only a bounded $\pp$-th moment for some $\pp \in (1, 2]$, has been well studied in unconstrained optimization \citep{Zhang2020Adaptive,Sadiev2023High,Cutkosky2020Momentum,Fang2026Normalization}, but remains largely unexplored in the constrained setting. Meanwhile, requiring objective value evaluations is also undesirable: it forces growing batch sizes that unavoidably increase the per-iteration cost and lead to suboptimal~overall~sample complexity \citep{Fang2026High}.

Another gap in the existing literature concerns the type of convergence guarantee. Existing analyses of stochastic gradient descent (SGD) under heavy-tailed noise \citep{Zhang2020Adaptive,Sadiev2023High,Cutkosky2020Momentum,Fang2026Normalization, Liu2025Nonconvex} establish only non-asymptotic, finite-iteration complexity bounds. These results ensure that a subsequence of iterates along the trajectory achieves a small stationarity measure, but do not characterize the behavior of the sequence as a whole. Moreover, in non-asymptotic analyses, hyperparameter choices typically depend on the total number of iterations, leaving the algorithm's behavior undefined when run indefinitely. Asymptotic convergence, which ensures that the first-order stationarity of the entire iterate sequence converges to zero (almost surely), is therefore an equally important and complementary guarantee.

In this paper, we address all the above gaps simultaneously by proposing TR-SSQP, a trust-region SSQP method for Problem \eqref{Intro_StoProb} that accommodates heavy-tailed stochastic gradient noise and admits a global asymptotic convergence guarantee. Our method is trust-region-based, which allows the direct use of indefinite Hessian approximations (if available) without modification, thereby fully exploiting curvature information of nonlinear problems. It is also objective-value-free, requiring no function evaluations, and operates under the heavy-tailed assumption that the gradient noise has a bounded $\pp$-th moment for some $\pp \in (1,2]$.

Our method incorporates three key ingredients. 
\textbf{First}, it employs a normal-tangential step decomposition to address the coupling between feasibility and optimality: the normal step restores feasibility while the tangential step reduces the objective along the linearized feasible manifold. 
\textbf{Second}, it incorporates a normalization mechanism \citep{Hubler2024Gradient,Sun2025Revisiting,Liu2025Nonconvex} into the trust-region radius design, together with Polyak momentum \citep{Cutkosky2020Momentum} for gradient estimation, enabling stable updates without gradient clipping. Gradient clipping can be sensitive to the clipping threshold \citep{Zhang2020Adaptive} and become unstable when Hessian estimates are statistically dependent on gradient estimates \citep{Fang2026Normalization}. This combination is nontrivial in the trust-region setting, where the step direction and stepsize are determined jointly; the normalization must therefore be applied carefully to account for their interaction. \textbf{Third}, a distinctive feature of TR-SSQP is its flexible batch size: the batch size can be either fixed or growing, and both regimes are handled within a single algorithmic framework and convergence theory (see Remark \ref{rem:3.6}). 
\begin{enumerate}[label=\textbf{(\alph*)}]

\item In the fixed-batch regime, TR-SSQP provably handles heavy-tailed gradient noise with as few as one stochastic gradient sample per iteration. The trust-region radius and the Polyak momentum weight can both be selected without knowing the moment order $\pp\in(1,2]$.

\item In the growing-batch regime, improved accuracy in gradient estimation strictly relaxes the requirements on the trust-region radius and the Polyak momentum weights. When the batch size increases polynomially at a proper rate (with the threshold depending on the moment order $\pp$), both the radius and the weights can decrease significantly more slowly than in the fixed-batch setting, enabling more aggressive updates at each step. 

\end{enumerate}

The above results reveal a \textit{quantitative} trade-off between per-iteration sampling cost and the aggressiveness of the updates. To the best of our knowledge, this aspect has not been explored in all existing work, even for unconstrained nonconvex optimization, where the batch size is typically required to be sufficiently large \cite[Theorem D.2]{Liu2025Nonconvex}, and only non-asymptotic guarantees are available. We defer further related work discussion in Appendix \ref{app:A}. We demonstrate the promising performance of the proposed method on benchmark nonlinear problem sets and regression tasks.

\noindent\textbf{Contributions.} Our main contributions are summarized as follows.
\begin{itemize}

\item We propose TR-SSQP, a method that combines normalization with Polyak momentum techniques to handle heavy-tailed gradient noise. The method employs a normal-tangential step decomposition to balance optimality and feasibility, allows the direct use of indefinite Hessian approximations without modification, and requires no objective function evaluations.
\item We establish global almost-sure first-order convergence of the proposed method, providing the first asymptotic convergence guarantee for equality-constrained stochastic optimization under finite $\pp$-th moment gradient noise.
\item We unify the fixed- and growing-batch regimes within a single algorithmic and theoretical framework, explicitly characterizing how the batch size governs the admissible trust-region and momentum schedules. In particular, in the fixed-batch setting, a universal schedule exists that is valid for any $\pp \in (1,2]$, even when $\pp$ is unknown. In the growing-batch setting, the conditions on radius and momentum are even relaxed, allowing more aggressive updates.

\item We demonstrate, through numerical experiments on CUTEst benchmark problems and constrained regression problems, that TR-SSQP exhibits promising robustness and empirical performance in heavy-tailed stochastic environments.
\end{itemize}

\noindent \textbf{Notation.}
We use $\|\cdot\|$ to denote the $\ell_2$ norm for vectors and the operator norm for matrices. Let $I$ be the identity matrix and $\0$ the zero vector or matrix, with dimensions clear from the context. For the constraints $c(\bx): \mR^d\rightarrow\mR^m$, we let $G(\bx) \coloneqq \nabla c(\bx) \in\mR^{m\times d}$ denote its Jacobian matrix and $c^i(\bx)$ denote the $i$-th constraint for $1\leq i\leq m$. Define $P(\bx) \coloneqq I-G(\bx)^{\top}[G(\bx)G(\bx)^{\top}]^{-1}G(\bx)$ as the projection matrix onto the null space of $G(\bx)$. Then, we let $Z(\bx)\in\mR^{d\times (d-m)}$ form an orthonormal basis of $\text{ker}(G(\bx))$ such that $Z(\bx)^{\top} Z(\bx)=I$ and $Z(\bx)Z(\bx)^{\top}=P(\bx)$. For any iteration index $k$, we write $\bc_k \coloneqq c(\bx_k)$ and $G_k \coloneqq G(\bx_k) = \nabla c(\bx_k)$ (similarly, $P_k \coloneqq P(\bx_k)$, etc.).

\section{TR-SSQP: Constrained Stochastic Optimization under Heavy-Tailed Noise}\label{sec:method}

In this section, we present the Trust-Region Stochastic Sequential Quadratic Programming (TR-SSQP) method for Problem \eqref{Intro_StoProb}, as summarized in Algorithm \ref{alg:TRSQP_HT}. We begin by introducing the Lagrangian formulation and the associated optimality, feasibility, and KKT residuals of the problem. We then describe in Section \ref{sec:prelim} a relaxation technique for computing the trial step and subsequently move to~the stochastic setting. We finally provide a detailed description of each step of TR-SSQP in Section~\ref{sec:alg}.$\quad$

The Lagrangian function of Problem \eqref{Intro_StoProb} is $\cL(\bx,\blambda)=f(\bx)+\blambda^{\top}c(\bx)$, where $\blambda\in\mR^m$ is the dual vector associated with the constraints. Under proper constraint qualifications, finding a first-order stationary point of \eqref{Intro_StoProb} is equivalent to finding a pair $(\bx^*,\blambda^*)$ such that
\begin{equation}\label{equ:kkt}
\nabla \cL(\bx^*,\blambda^*) = \begin{pmatrix}
\nabla_{\bx} \cL(\bx^*,\blambda^*)\\
\nabla_{\blambda} \cL(\bx^*,\blambda^*)
\end{pmatrix} =
\begin{pmatrix}
\nabla f(\bx^*)+G^{\top}(\bx^*)\blambda^*\\
c(\bx^*)
\end{pmatrix} = \begin{pmatrix}
\0\\
\0
\end{pmatrix}.
\end{equation}
We call $\|\nabla_{\bx} \cL(\bx,\blambda)\|$ the optimality residual, $\|\nabla_{\blambda} \cL(\bx,\blambda)\|$ (i.e., $\|c(\bx)\|$) the feasibility residual, and $\|\nabla \cL(\bx,\blambda)\|$ the KKT residual.

\subsection{Constraints infeasibility and relaxation}\label{sec:prelim}

SQP can be regarded as an application of (quasi-)Newton method to the KKT conditions \eqref{equ:kkt}. Given the current iterate $\bx_k$ and trust-region radius $\Delta_k$ at iteration $k$, we construct a quadratic approximation of the objective and a linear approximation of the constraints, along with a trust-region constraint. This leads to the following subproblem:
\begin{equation}\label{def:SQPsubproblem}
\min_{\Delta\bx\in\mR^d} \ \frac{1}{2}\Delta\bx^{\top} B_k\Delta\bx + \nabla f_k^{\top} \Delta\bx, \; \quad\; \text{s.t.}\quad\; \bc_k+G_k\Delta\bx=\0,\quad \|\Delta\bx\|\leq\Delta_k,
\end{equation}
where $B_k$ is typically an approximate Lagrangian Hessian $\nabla^2_{\bx} \cL_k$ to incorporate curvature information from both the objective and constraints, though it can also be as simple as the identity matrix.

However, when $\{\Delta\bx:\bc_k + G_k \Delta\bx = \0\} \cap \{\Delta\bx: \|\Delta\bx\| \le \Delta_k\} = \emptyset$, Problem \eqref{def:SQPsubproblem} is \textit{infeasible}. The infeasibility issue arises when the radius $\Delta_k$ is too small; specifically, if $\Delta_k < \|G_k^\top [G_k G_k^\top]^{-1} \bc_k\|$, as the right-hand side is the minimum norm of a solution to the linearized constraint. A natural remedy is not to enlarge $\Delta_k$, as doing so would conflict with the role of the trust-region constraint. Instead, we propose relaxing the linearized constraint to $\gamma_k\bc_k+G_k\Delta\bx=\0$ for a feasible $\gamma_k\in(0,1]$.

In particular, we are inspired by classical, deterministic trust-region methods in \cite{Omojokun1989Trust, Byrd1987Trust, Vardi1985Trust} and perform a normal-tangential step decomposition. The trial step $\Delta\bx_k$ is written as $\Delta\bx_k=\bw_k+\bt_k$, where $\bw_k\in\text{im}(G_k^{\top})$ is a normal step and $\bt_k\in\text{ker}(G_k)$ is a tangential step. Then, enforcing $\gamma_k\bc_k+ G_k\Delta\bx_k  =\0$ yields the closed-form expression of the normal step (assuming $G_k$ has full row rank)
\begin{equation}\label{eq:Sto_normal_step}
\bw_k \coloneqq \gamma_k \bv_k \coloneqq -\gamma_k \cdot G_k^{\top}[G_kG_k^{\top}]^{-1}\bc_k.
\end{equation}
For a prespecified $\theta\in(0,1)$, we define $\gamma_k$ to be
\begin{equation}\label{eq:gamma_def}
\gamma_k\coloneqq \min\left\{\theta\Delta_k / \|\bv_k\|, 1\right\}
\end{equation}
to ensure $\Vert \bw_k \Vert = \gamma_k \Vert \bv_k \Vert \leq \theta \Delta_k$. Note that when $\bv_k=\0$ (equivalently, $\bc_k = \0$), there is no need to choose $\gamma_k$ since $\gamma_k \bc_k=0$. That being said, in this case we have $\gamma_k=1$ for consistency with \eqref{eq:gamma_def}. With this $\bw_k$, we then compute the tangential step $\bt_k$ by solving
\begin{equation}\label{eq:Sto_tangential_step1}
\min_{\bt\in\mR^{d}}\;  q_k(\bt) \coloneqq\frac{1}{2}\bt^\top B_k\bt+(\nabla f_k+ B_k\bw_k)^\top\bt,\quad  \; \text{s.t.}\quad G_k\bt=\0,\quad \|\bt\|^2\leq \Delta_k^2-\|\bw_k\|^2.
\end{equation}
Here, the objective is just that of \eqref{def:SQPsubproblem} with $\Delta\bx = \bw_k+\bt$; the first constraint is due to $\bt_k\in \ker(G_k)$, and the second is the trust-region constraint. In fact, we can always express $\bt_k$ as $\bt_k=Z_k\bu_k$ for some $\bu_k\in\mR^{d-m}$, so that \eqref{eq:Sto_tangential_step1} can be rewritten in terms of $\bu_k$ as (we abuse the notation for $q_k$):$\quad$
\begin{equation}\label{eq:Byrd_tangential}
\min_{\bu\in\mR^{d-m}} \; q_k(\bu) \coloneqq \frac{1}{2}\bu^{\top}Z_k^{\top}B_kZ_k\bu+(\nabla f_k+B_k\bw_k)^{\top}Z_k\bu \; \quad\text{s.t.}\;\; \|\bu\|^2\leq\Delta_k^2-\|\bw_k\|^2.
\end{equation}
Problem \eqref{eq:Byrd_tangential} is a standard trust-region subproblem in unconstrained optimization. For our method, it is not necessary to solve \eqref{eq:Sto_tangential_step1} (or \eqref{eq:Byrd_tangential}) to optimality; instead, we only require $\bt_k = Z_k\bu_k$ to achieve a reduction in $q_k(\bu)$ of \textit{any fixed fraction} $\eta\in(0,1]$ of the Cauchy reduction, i.e., the reduction achieved by the Cauchy point $\bu_k^{CP}$:
\begin{equation}\label{equ:reduction}
q_k(\bu_k)-q_k(\0) \leq \eta\cdot (q_k(\bu_k^{CP})-q_k(\0)).
\end{equation}
The Cauchy point $\bu_k^{CP}$ of \eqref{eq:Byrd_tangential} is obtained by minimizing $q_k(\bu)$ along the steepest descent direction $-Z_k^{\top}(\nabla f_k+B_k\bw_k)$ within the trust region; see \cite[Chapter 4.1 (4.12)]{Nocedal2006Numerical} for its explicit form. The corresponding reduction is characterized by the following lemma. Our reduction condition \eqref{equ:reduction} can be easily satisfied by several standard procedures, including exact minimization, dogleg method, two-dimensional subspace minimization, and projected conjugate gradient methods. We refer to \cite{Nocedal2006Numerical} for further details.

\begin{lemma}\label{lemma:Ful_cauchy}
For any $k\geq 0$, the Cauchy reduction of Problem \eqref{eq:Byrd_tangential} satisfies
\begin{equation*}
q_k(\bu_k^{CP})-q_k(\0) \leq-\|Z_k^{\top}(\nabla f_k+B_k\bw_k)\| \Delta_k^t +\frac{1}{2}\|B_k\| \left(\Delta_k^t\right)^2,
\end{equation*}
where $\Delta_k^t \coloneqq \sqrt{\Delta_k^2 - \Vert \bw_k \Vert^2}$ is the trust-region radius for the tangential step.
\end{lemma}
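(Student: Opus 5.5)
Write $\bg_k \coloneqq Z_k^{\top}(\nabla f_k+B_k\bw_k)$ and $H_k \coloneqq Z_k^{\top}B_kZ_k$, so that $q_k(\bu)-q_k(\0)=\frac12\bu^\top H_k\bu+\bg_k^\top\bu$. The plan is to use only the defining property of the Cauchy point: it minimizes $q_k$ along the ray $\{-\tau \bg_k/\|\bg_k\| : 0\le\tau\le\Delta_k^t\}$. It therefore suffices to exhibit one feasible point on this ray whose reduction is at most the right-hand side of the claim. The natural candidate is the boundary point $\bar\bu_k \coloneqq -\Delta_k^t\,\bg_k/\|\bg_k\|$, which is feasible because $\|\bar\bu_k\|=\Delta_k^t$.

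First I dispose of the degenerate case $\bg_k=\0$. Then the steepest descent direction vanishes, the Cauchy point is $\bu_k^{CP}=\0$, and the left-hand side equals $0$. The right-hand side is $\frac12\|B_k\|(\Delta_k^t)^2\ge 0$, so the inequality holds. The same argument covers $\Delta_k^t=0$, since then both sides vanish or the right-hand side is nonnegative.

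For $\bg_k\neq\0$, optimality of the Cauchy point along the ray gives $q_k(\bu_k^{CP})-q_k(\0)\le q_k(\bar\bu_k)-q_k(\0)$. Evaluating at $\bar\bu_k$,
\[
q_k(\bar\bu_k)-q_k(\0) = -\|\bg_k\|\Delta_k^t+\tfrac12(\Delta_k^t)^2\,\frac{\bg_k^\top H_k\bg_k}{\|\bg_k\|^2}.
\]
The quadratic term is bounded by $\|H_k\|\le\|Z_k\|^2\|B_k\|=\|B_k\|$, using $Z_k^\top Z_k=I$, which gives $\|Z_k\|=1$. This yields the stated inequality.

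The only point needing care is that the Cauchy point is defined (as in \cite[(4.12)]{Nocedal2006Numerical}) as the minimizer of $q_k$ over the whole segment along $-\bg_k$ inside the trust region, both when $\bg_k^\top H_k\bg_k\le0$ and when it is positive. Hence comparing with the feasible endpoint is legitimate in every case, and no case split on the curvature is required.
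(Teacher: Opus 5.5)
Your proof is correct, and it takes a shorter route than the paper's. The paper plugs in the explicit Cauchy-point formula \cite[(4.12)]{Nocedal2006Numerical} and splits into two cases (in your notation):
\begin{itemize}
\item if $\|\bg_k\|^3\le\Delta_k^t\,\bg_k^\top Z_k^\top B_kZ_k\bg_k$, the Cauchy point is the interior minimizer along $-\bg_k$, and the reduction is at most $-\|\bg_k\|^2/(2\|B_k\|)$;
\item otherwise, it is the boundary point, and the reduction is bounded exactly as in your display.
\end{itemize}
A completing-the-square identity then shows $\|\bg_k\|\Delta_k^t-\tfrac12\|B_k\|(\Delta_k^t)^2\le\|\bg_k\|^2/(2\|B_k\|)$, so the interior-case bound implies the boundary-case one.

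You instead use the variational characterization: the Cauchy point minimizes $q_k$ over the whole segment, so it does no worse than the feasible endpoint. This makes the paper's comparison automatic and removes the case split. It also has two small side benefits. It avoids dividing by $\|B_k\|$ in the paper's combined expression, which is formally undefined if $B_k=\0$. And it handles $\bg_k=\0$ explicitly, a case where (4.12) is not defined.

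What the paper's exact case computation offers in return is the raw material for the classical Cauchy decrease $-\tfrac12\|\bg_k\|\min\{\Delta_k^t,\|\bg_k\|/\|B_k\|\}$. That bound stays informative for large radii, where the stated bound becomes vacuous (once $\Delta_k^t>2\|\bg_k\|/\|B_k\|$). Endpoint comparison alone cannot yield it. The paper does not need it, though, since only the stated form enters Lemma~\ref{lem:normal_tangent_contribution} and $\Delta_k\to0$.

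Finally, $\bg_k$ already denotes the mini-batch gradient in \eqref{eq:mk_gk_def}, so a different symbol would avoid a clash.
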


\begin{remark}
Our method specifies a parameter $\theta\in(0,1)$ to split the trust-region radius $\Delta_k$ into $\theta\Delta_k$ and $\sqrt{1-\theta^2}\Delta_k$ for normal and tangential steps, respectively. It is worth mentioning that a recent work \cite{Fang2024Fully} proposed a parameter-free radius decomposition, in which $\theta$ is chosen adaptively at each step based on feasibility and optimality residuals. Our analysis can be readily extended to this more sophisticated decomposition scheme; however, we consider specifying any $\theta\in(0,1)$ for clarity and simplicity. First, it is more efficient to implement. Second, our goal is not to design a new relaxation mechanism, but to develop a new theory for trust-region methods under heavy-tailed~noise. The present scheme has captured the essential features of radius decomposition.
\end{remark}

\begin{remark}[\textbf{Feasibility vs. Optimality}]
We note that the normal step $\bw_k$ reduces the feasibility residual while the tangential step $\bt_k$ reduces the optimality residual. To see this clearly, we recall that $\Delta\bx_k=\bw_k+\bt_k$ with $\bt_k\in\text{ker}(G_k)$, then
\begin{equation}\label{eq:constraint_violation}
\|\bc_k+G_k\Delta\bx_k\|-\|\bc_k\|=\|\bc_k+G_k\bw_k\|-\|\bc_k\| \stackrel{\eqref{eq:Sto_normal_step}}{=}-\gamma_k\|\bc_k\|\leq 0,
\end{equation}
where the inequality is strict whenever $\bc_k\neq \0$. Furthermore, define the least-squares Lagrange multiplier as $\blambda_k \coloneqq -[G_k G_k^{\top}]^{-1} G_k \nabla f_k$, then $ \nabla_{\bx} \cL_k = \nabla f_k + G_k^{\top} \blambda_k = P_k \nabla f_k$. Using $Z_k^{\top} Z_k = I$, $Z_k Z_k^{\top} = P_k$, and $P_k^2 = P_k$, we obtain
\begin{align*}
\|Z_k^{\top} (\nabla f_k+B_k\bw_k)\|^2 & =  (\nabla f_k+B_k\bw_k)^{\top} Z_k Z_k^{\top} (\nabla f_k+B_k\bw_k) \\
& =(\nabla f_k+B_k\bw_k)^{\top} P_k^2 (\nabla f_k+B_k\bw_k) = \|\nabla_{\bx}\cL_k + P_k B_k\bw_k\|^2.
\end{align*}
Consequently, Lemma \ref{lemma:Ful_cauchy} can be equivalently written as
\begin{equation}\label{eq:Ful_Cauchy_2}
q_k(\bu_k^{CP}) - q_k(\0) \leq  -\|\nabla_{\bx} \cL_k + P_k B_k \bw_k\| \Delta_k^t +\frac{1}{2}\|B_k\| \left(\Delta_k^t\right)^2.
\end{equation}
\end{remark}

\subsection{TR-SSQP with Polyak momentum}\label{sec:alg}

Following the step computation in Section \ref{sec:prelim}, we now introduce the TR-SSQP scheme, which is a stochastic method for Problem \eqref{Intro_StoProb} under heavy-tailed noise and is summarized in Algorithm~\ref{alg:TRSQP_HT}. The method involves three prespecified sequences $\{\Delta_k, \nu_k, N_k\}_{k\geq 0}$, denoting the trust-region radius, the Polyak momentum weight, and the batch size, respectively.

\begin{algorithm}[t]
\caption{Trust-Region SSQP with Heavy-Tailed Noise}
\label{alg:TRSQP_HT}
\begin{algorithmic}
\State \textbf{Input:} initial point $\bx_0$, initial momentum $\bbm_{-1}$, radius sequence $\{\Delta_k\}$, momentum weights $\{\nu_k\}$, batch sizes $\{N_k\}$, and two fractions $\theta \in(0,1)$, $\eta \in (0,1]$.
\For{$k=0,1,2,\ldots$}
\State Compute $\bc_k = c(\bx_k)$ and $G_k = \nabla c(\bx_k)$.
\State Draw i.i.d. samples $\{\xi_k^i\}_{i=1}^{N_k}\sim \mP$ and form $\bg_k$ and the Polyak momentum $\bbm_k$ via \eqref{eq:mk_gk_def}.
\State Compute $\bw_k = \gamma_k\bv_k$ using \eqref{eq:Sto_normal_step} and \eqref{eq:gamma_def}.
\State Compute $\bar{\bt}_k = Z_k \bar{\bu}_k$ with $\bar\bu_k$ from \eqref{eq:Byrd_tangential} satisfying \eqref{equ:reduction} for $\bar q_k$.
\State Form $\bar\Delta\bx_k=\bw_k+\bar\bt_k$ and update $\bx_{k+1}=\bx_k+\bar\Delta\bx_k$.
\EndFor
\end{algorithmic}
\end{algorithm}

Given the iterate $\bx_k$ and the triplet $(\Delta_k, \nu_k, N_k)$, we first compute $\bc_k = c(\bx_k)$ and $G_k = \nabla c(\bx_k)$. We then estimate the objective gradient by drawing i.i.d. samples $\{\xi_k^i\}_{i=1}^{N_k}\sim\mP$ and forming a mini-batch estimator, together with a Polyak moving average: 
\begin{equation}\label{eq:mk_gk_def}
\bbm_k \coloneqq (1-\nu_k)\bbm_{k-1} + \nu_k \bg_k \quad\quad\text{with}\quad\quad \bg_k \coloneqq \frac{1}{N_k}\sum_{i=1}^{N_k}\nabla F(\bx_k;\xi_k^i),
\end{equation}
where $\bbm_{-1}\in\mR^d$ is an initial momentum vector. We do not use $\bg_k$ directly in the heavy-tailed regime, since the noise $\bg_k-\nabla f_k$ may have only a finite $\mathfrak p$-th moment for some $\mathfrak p\in(1,2]$, making variance-based control of the raw mini-batch gradient unavailable. In contrast, the moving-average recursion \eqref{eq:mk_gk_def} smooths the stochastic estimate and yields a tracking error $\be_k \coloneqq \bbm_k-\nabla f_k$ that decomposes into an initial error, a gradient-drift term, and a recursively weighted martingale-noise term. This structure permits finite $\mathfrak p$-th moment control without gradient clipping. Moreover, our trust-region~bound $\|\Delta\bx_k\|\leq \Delta_k$ also prevents large gradient realizations from producing unbounded steps.

With the gradient estimate $\bbm_k$ and any symmetric matrix $B_k$ constructed as an approximation of the Lagrangian Hessian $\nabla_{\bx}^2 \cL_k$ (cf. Remark \ref{rem:2.4}), we then apply the constraint relaxation strategy in Section \ref{sec:prelim} to compute $\bar\Delta\bx_k$, replacing $\nabla f_k$ by $\bbm_k$ in Problem \eqref{def:SQPsubproblem}. 
In particular, $\bar\Delta\bx_k = \bw_k + \bar{\bt}_k$, where $\bw_k$ is from \eqref{eq:Sto_normal_step} and $\bar\bt_k = Z_k\bar\bu_k$ is from \eqref{eq:Byrd_tangential}. 
Let $\bar q_k(\cdot)$ denote the analogue of $q_k(\cdot)$ in \eqref{eq:Byrd_tangential} that replaces $\nabla f_k$ with $\bbm_k$. We only require $\bar{\bu}_k$ to satisfy the reduction condition \eqref{equ:reduction} for $\bar q_k(\cdot)$.
We do not write $\bar\bw_k$ here since its expression in \eqref{eq:Sto_normal_step} does not depend on the samples $\{\xi_k^i\}_i$; that is, stochasticity enters only through the tangential model at each step. Finally, we update $\bx_{k+1} = \bx_k + \bar\Delta\bx_k$ and~repeat~the~procedure.

In practice, we may simply set $\theta = 0.5$ in \eqref{eq:gamma_def} to balance the radius allocated to the normal and tangential components. By construction, $\|\bar\Delta\bx_k\|\leq \Delta_k$ and $\bc_k+G_k \bar\Delta\bx_k = (1-\gamma_k)\bc_k$ (cf. \eqref{eq:constraint_violation}), so the iterate remains within the trust region while reducing the linearized feasibility residual.

\begin{remark}[\textbf{Discussion on $B_k$}]\label{rem:2.4}
Unlike existing line-search-based SSQP methods \citep{Berahas2021Sequential, Berahas2023Accelerating, Na2022adaptive, Na2025Statistical, Na2023Inequality, Berahas2024Stochastic}, we do not require $B_k$ to be positive definite in the null space $\text{ker}(G_k)$, i.e., $Z_k^{\top}B_kZ_k\succ \0$. Our analysis only requires $B_k$ to be upper bounded. This advantage stems from the trust-region scheme; more precisely, the trust-region constraint ensures \eqref{eq:Byrd_tangential} to be bounded below even without convexity. This flexibility allows us to construct different choices of $B_k$ when forming the subproblem. In Section \ref{sec:experiment}, we consider several options: the identity matrix, quasi-Newton updates, estimated Hessians, and averaged estimated Hessians. In contrast, the aforementioned literature set $B_k=I$ without any~curvature~information in practice.
\end{remark}

\section{Convergence Analysis}\label{sec:analysis}

In this section, we first introduce the assumptions under which we then establish convergence guarantees for Algorithm \ref{alg:TRSQP_HT}. In particular, we leverage the $\ell_2$-penalized Lyapunov function $\phi_{\mu}(\bx) \coloneqq f(\bx) + \mu \| c(\bx) \|$ with a sufficiently large penalty parameter $\mu > 0$, and show that the KKT residual $\| \nabla \cL_k \|$ converges to zero almost surely from any initialization.

\subsection{Assumptions}\label{sec:assumptions}

Let $\mF_k \coloneqq \sigma(\{\{\xi_j^i\}_{i=1}^{N_j}\}_{j=0}^{k})$ denote the $\sigma$-algebra generated by all randomness up to (and including) the iteration $k$. Therefore, before performing the iteration $k$, $\bx_k, \bc_k, G_k$ are all $\mF_{k-1}$-measurable.
We begin with standard regularity and boundedness conditions commonly assumed in the literature on deterministic and stochastic constrained optimization; see, e.g., \citep{Byrd1987Trust,Powell1990trust,ElAlem1991Global,Berahas2021Sequential,Na2022adaptive,Curtis2024Stochastic,Fang2024Fully}.

\begin{assumption}\label{ass:regularity}
Let $\Omega\subseteq\mR^d$ be an open convex set containing the iterates $\{\bx_k\}$. We assume that the objective $f(\bx)$ is continuously differentiable and bounded below by $f_{\inf}$ on $\Omega$, and $\nabla f(\bx)$ is Lipschitz continuous on $\Omega$ with constant $L_{\nabla f}>0$. The constraint $c(\bx)$ is also continuously differentiable with $L_G$-Lipschitz continuous Jacobian $G(\bx) = \nabla c(\bx)$ on $\Omega$.
Moreover, there exist positive constants $\kappa_B$, $\kappa_c$, $\kappa_{\nabla f}$, $\kappa_{1,G}$, $\kappa_{2,G}$ such that for all $k\geq0$,
\begin{equation}\label{eq:regularity_bounds}
\|B_k\|\leq \kappa_B,\qquad \|\bc_k\|\leq \kappa_c,\qquad \|\nabla f_k\|\leq \kappa_{\nabla f},\qquad \kappa_{1,G} I\preceq G_kG_k^\top\preceq \kappa_{2,G} I.
\end{equation}
\end{assumption}

The condition on $G_kG_k^\top$ implies that $G_k$ has full row rank with bounded conditioning. Consequently, the projection $P_k = I- G_k^\top[G_kG_k^{\top}]^{-1}G_k$, the normal step $\bw_k$, and the least-squares multiplier estimates $\bar\blambda_k \coloneqq -[G_kG_k^{\top}]^{-1}G_k\bbm_k,$ are all well-defined. 
Consistent with the trust-region construction in Section \ref{sec:alg}, we only impose upper boundedness on $B_k$ and do not require $Z_k^\top B_kZ_k$ to be positive definite. We next introduce the noise assumptions for the stochastic gradient.

\begin{assumption}\label{ass:oracle}
For any $k \geq 0$, the samples $\{\xi_k^i\}_{i=1}^{N_k}\sim\mP$ are i.i.d. conditional on $\mF_{k-1}$ and are unbiased: $\mE[\nabla F(\bx_k;\xi_k^i)\mid\mF_{k-1}] = \nabla f_k$ for $1\leq i \leq N_k$. Moreover, there exist $\pp\in(1,2]$ and constants $\sigma_0 \geq 0$ such that
\begin{equation}\label{equ:heavy_tail}
\mE [\|\nabla F(\bx_k;\xi_k^i)-\nabla f_k\|^\pp\mid\mF_{k-1}] \leq \sigma_0^\pp \quad\; \text{ for } \;\; 1\leq i\leq N_k.
\end{equation}
\end{assumption}

Assumption \ref{ass:oracle} imposes uniformly bounded conditional $\pp$-th moments on the stochastic gradient noise, while allowing individual noise realizations to be unbounded. When $\pp=2$, it reduces to the classical bounded-variance condition. For $\pp<2$, the stochastic gradient may have infinite variance, so the analysis must instead rely on $\pp$-th moment bounds. Since the mini-batch estimator $\bg_k$ averages conditionally independent samples, standard moment inequalities yield finite $\pp$-th moment control of $\bg_k-\nabla f_k$, with a noise level decreasing with the batch size.

\subsection{Global almost-sure convergence}\label{sec:convergence}

In this subsection, we establish the global almost-sure convergence of Algorithm \ref{alg:TRSQP_HT}. For nonlinear problems, "global" refers to the convergence of a stationarity measure (e.g., KKT residual) to zero from \textit{any} initialization, in contrast to convergence to a global optimum, which is generally unachievable~without additional problem structures. However, these notions indeed coincide for convex problems.$\quad$

For later use, given the gradient estimate $\bbm_k$ at iteration $k$, we define the least-squares multiplier estimate and the projected objective gradient estimate as
\begin{equation*}
\bar\blambda_k = -[G_kG_k^{\top}]^{-1}G_k\bbm_k,\quad\quad\quad \bar\br_k \coloneqq P_k \bbm_k = \bbm_k + G_k^{\top} \bar\blambda_k.
\end{equation*}
Recalling \eqref{equ:kkt}, we see that $\bar\br_k$ serves as a surrogate measure for the optimality residual. Accordingly, we can incorporate it with the feasibility residual and define the stochastic KKT residual as $\| \bar\nabla \cL_k \| = \| (\bar\br_k^\top \ \bc_k^\top)\| \leq \Vert \bar\br_k \Vert + \Vert \bc_k \Vert$. We can also analogously define $\blambda_k$, $\br_k$, and $\nabla \cL_k$ by replacing $\bbm_k$ with the true gradient $\nabla f_k$. Recall also that $\be_k = \bbm_k-\nabla f_k$ is the tracking error of the gradient momentum.

For the three prespecified sequences $\{\Delta_k, \nu_k, N_k\}_{k\geq 0}$ of the method, we let
\begin{equation}\label{equ:seq}
\Delta_k = \Delta_0 / (k+1)^{a_1},  \qquad \nu_k = \nu_0 / (k+1)^{a_2}, \qquad N_k = \left\lceil N_0(k+1)^{a_3}\right\rceil,
\end{equation}
where $\Delta_0 > 0$, $0<\nu_0\leq 1$, $N_0\geq 1$ and $a_1, a_2 > 0$ and $a_3 \geq 0$. We will utilize the $\ell_2$-penalized Lyapunov function of the form: $\phi_\mu(\bx) = f(\bx) + \mu\|c(\bx)\|$, with $\mu>\kappa_{\nabla f}/\sqrt{\kappa_{1,G}}$ being any scalar, in order to analyze the convergence of the method. Intuitively, one can expect that if the iterates can drive down $\phi_\mu(\bx)$, then they will also drive down the KKT residual. To this end, we define the local model of the $\ell_2$ function at $\bx_k$ along the direction $\bd \in \mR^d$ as
\begin{equation*}
\ell_{\mu}(\bd; \bx_k, \nabla f_k) \coloneqq f_k + \nabla f_k^{\top} \bd + \mu \| \bc_k + G_k \bd\|,
\end{equation*}
which essentially linearizes the nonlinear objective and constraint functions. Then, we can compute the local model reduction as
\begin{equation}\label{equ:local_model_reduction_def}
\Delta \ell_{\mu}(\bd; \bx_k, \nabla f_k) \coloneqq \ell_{\mu}(\0; \bx_k, \nabla f_k) - \ell_{\mu}(\bd; \bx_k, \nabla f_k) = -\nabla f_k^{\top} \bd + \mu (\|\bc_k\| - \|\bc_k + G_k\bd\|),
\end{equation}
which captures the first-order predicted reduction of the Lyapunov function $\phi_\mu(\bx)$ at $\bx_k$.

We begin by analyzing the local model reduction \eqref{equ:local_model_reduction_def}. The following lemma quantifies the contributions of the normal and tangential steps.

\begin{lemma}\label{lem:normal_tangent_contribution}
Under Assumption \ref{ass:regularity}, there exist constants $\kappa_{\mathrm n}, \kappa_{\mathrm t}>0$ such that, for any $k \geq 0$,
\begin{align}
\text{normal:}\quad\quad & - \nabla f_k^{\top}\bw_k + \mu\left(\|\bc_k\|-\|\bc_k+G_k\bar\Delta\bx_k\|\right) \geq \kappa_{\mathrm n}\Delta_k\|\bc_k\|, \label{eq:normal_contribution} \\
\text{tangential:}\quad\quad  & - \nabla f_k^{\top}\bar\bt_k \geq \kappa_{\mathrm t}\Delta_k\|\bar\br_k\| - \Delta_k\|\be_k\| - 3\kappa_B\Delta_k^2. \label{eq:tangential_contribution}
\end{align}
\end{lemma}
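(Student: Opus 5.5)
The plan is to prove the two bounds separately. Each term is rewritten using the explicit structure of the normal step \eqref{eq:Sto_normal_step}--\eqref{eq:gamma_def} and the Cauchy decrease \eqref{equ:reduction} for $\bar q_k$. The uniform bounds of Assumption~\ref{ass:regularity} then make every constant independent of $k$. I also use that the radius sequence \eqref{equ:seq} satisfies $\Delta_k\le\Delta_0$ for all $k$.

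\emph{Normal part.} Since $\bar\bt_k\in\ker(G_k)$, \eqref{eq:constraint_violation} gives $\|\bc_k\|-\|\bc_k+G_k\bar\Delta\bx_k\|=\gamma_k\|\bc_k\|$. Next I bound the normal step. From $\kappa_{1,G}I\preceq G_kG_k^\top$ we get $\|G_k^\top[G_kG_k^\top]^{-1}\|\le 1/\sqrt{\kappa_{1,G}}$, so $\|\bv_k\|\le \|\bc_k\|/\sqrt{\kappa_{1,G}}$. Hence
\begin{equation*}
-\nabla f_k^\top\bw_k\ge -\kappa_{\nabla f}\gamma_k\|\bc_k\|/\sqrt{\kappa_{1,G}} .
\end{equation*}
Adding the two pieces gives a lower bound of $\gamma_k(\mu-\kappa_{\nabla f}/\sqrt{\kappa_{1,G}})\|\bc_k\|$, whose coefficient is positive by the choice of $\mu$. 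It remains to show $\gamma_k\gtrsim\Delta_k$. From $\|\bv_k\|\le\kappa_c/\sqrt{\kappa_{1,G}}$,
\begin{equation*}
\gamma_k=\min\{\theta\Delta_k/\|\bv_k\|,1\}\ge\min\{\theta\sqrt{\kappa_{1,G}}/\kappa_c,\,1/\Delta_0\}\,\Delta_k ,
\end{equation*}
where the case $\gamma_k=1$ uses $\Delta_k\le\Delta_0$. Therefore
\begin{equation*}
\kappa_{\mathrm n}=(\mu-\kappa_{\nabla f}/\sqrt{\kappa_{1,G}})\min\{\theta\sqrt{\kappa_{1,G}}/\kappa_c,1/\Delta_0\}
\end{equation*}
works. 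The case $\bc_k=\0$ is trivial.

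\emph{Tangential part.} I write $-\nabla f_k^\top\bar\bt_k=-\bbm_k^\top\bar\bt_k+\be_k^\top\bar\bt_k\ge-\bbm_k^\top\bar\bt_k-\Delta_k\|\be_k\|$, using $\|\bar\bt_k\|\le\Delta_k$. For the first term I expand $\bar q_k(\bar\bu_k)-\bar q_k(\0)=\tfrac12\bar\bt_k^\top B_k\bar\bt_k+(\bbm_k+B_k\bw_k)^\top\bar\bt_k$. I then combine \eqref{equ:reduction} with Lemma~\ref{lemma:Ful_cauchy} in the form \eqref{eq:Ful_Cauchy_2}, with $\nabla f_k$ replaced by $\bbm_k$ so that $\nabla_{\bx}\cL_k$ becomes $\bar\br_k$. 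This yields
\begin{equation*}
-\bbm_k^\top\bar\bt_k\ge \tfrac12\bar\bt_k^\top B_k\bar\bt_k+\bw_k^\top B_k\bar\bt_k+\eta\|\bar\br_k+P_kB_k\bw_k\|\Delta_k^t-\tfrac{\eta}{2}\kappa_B(\Delta_k^t)^2 .
\end{equation*}
Each remaining term is bounded using $\|\bw_k\|\le\theta\Delta_k$, $\|\bar\bt_k\|\le\Delta_k^t\le\Delta_k$, $\|P_k\|\le1$ and $\|B_k\|\le\kappa_B$:
\begin{itemize}
\item the quadratic term satisfies $\tfrac12\bar\bt_k^\top B_k\bar\bt_k\ge-\tfrac12\kappa_B\Delta_k^2$;
\item the cross term satisfies $\bw_k^\top B_k\bar\bt_k\ge-\theta\kappa_B\Delta_k^2$;
\item the reverse triangle inequality gives $\|\bar\br_k+P_kB_k\bw_k\|\ge\|\bar\br_k\|-\theta\kappa_B\Delta_k$.
\end{itemize}
Since $\Delta_k^t\ge\sqrt{1-\theta^2}\,\Delta_k$, the leading term is at least $\eta\sqrt{1-\theta^2}\,\Delta_k\|\bar\br_k\|$. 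The accumulated $\Delta_k^2$ coefficient is $(\tfrac12+\theta+\eta\theta+\tfrac{\eta}{2})\kappa_B\le 3\kappa_B$, since $\theta<1$ and $\eta\le1$. This gives \eqref{eq:tangential_contribution} with $\kappa_{\mathrm t}=\eta\sqrt{1-\theta^2}$.

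The main obstacle is the bookkeeping in the tangential part. The Cauchy decrease must be transferred from the stochastic model $\bar q_k$, which contains the shifted gradient $\bbm_k+B_k\bw_k$, back to a bound on $-\nabla f_k^\top\bar\bt_k$. All cross terms involving $B_k$ and $\bw_k$ must be absorbed into the $3\kappa_B\Delta_k^2$ term without losing the order $\Delta_k$ in front of $\|\bar\br_k\|$. The remaining point is to note $\Delta_k^t\in[\sqrt{1-\theta^2}\Delta_k,\Delta_k]$: the lower end is used for the gain term and the upper end for the error terms.
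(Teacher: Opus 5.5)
Your proposal is correct and follows essentially the same route as the paper: the identity $\|\bc_k\|-\|\bc_k+G_k\bar\Delta\bx_k\|=\gamma_k\|\bc_k\|$ with the bound $\gamma_k\ge\min\{\theta\sqrt{\kappa_{1,G}}/\kappa_c,1/\Delta_0\}\Delta_k$, and the fractional Cauchy decrease combined with Lemma~\ref{lemma:Ful_cauchy}, giving the same constants $\kappa_{\mathrm n}$ and $\kappa_{\mathrm t}=\eta\sqrt{1-\theta^2}$. The differences are cosmetic: you bound $\nabla f_k^\top\bw_k$ by Cauchy--Schwarz with $\|\bv_k\|\le\|\bc_k\|/\sqrt{\kappa_{1,G}}$ rather than through the multiplier identity $\nabla f_k^\top\bw_k=\gamma_k\blambda_k^\top\bc_k$, and you keep the $\theta$ factors, obtaining a slightly sharper $\Delta_k^2$ coefficient before relaxing it to $3\kappa_B$.
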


The first inequality shows that the normal step $\bw_k$ yields a decrease proportional to the feasibility residual, while the second shows that the tangential step $\bar\bt_k$ yields a decrease proportional to the optimality residual. It should be noted that the term $\Delta_k\|\be_k\|$, due to the tracking error in the gradient estimate $\bbm_k$, will be controlled later through the momentum recursion. The second-order term $\Delta_k^2$ will also be dominated by the first-order term as the trust-region radius $\Delta_k$ decreases.

We next combine \eqref{eq:normal_contribution} and \eqref{eq:tangential_contribution} for the full step $\bar\Delta\bx_k = \bw_k + \bar\bt_k$. Since $\ell_\mu(\cdot)$ is a first-order model of $\phi_\mu(\cdot)$, a Taylor expansion shows that the actual Lyapunov reduction differs from the local model reduction only by a second-order term $O(\Delta_k^2)$. This leads to the following one-step recursion.

\begin{lemma}\label{lem:model_reduction_recursion}
Under Assumption \ref{ass:regularity}, there exists a constant $\kappa_{\ell} > 0$ such that, for any $k\geq0$,
\begin{equation}\label{equ:local_model_reduction_bound}
\Delta \ell_{\mu}(\bar\Delta\bx_k; \bx_k, \nabla f_k) \geq \kappa_{\ell}\Delta_k\|\bar\nabla\cL_k\| - \Delta_k\|\be_k\| - 3\kappa_B\Delta_k^2.
\end{equation}
Moreover, there exist constants $\kappa_{\phi,1}, \kappa_{\phi,2}, \kappa_{\phi,3}$ such that, for any $k\geq0$,
\begin{multline}\label{eq:lyapunov_recursion}
\phi_{\mu}(\bx_{k+1})-\phi_{\mu}(\bx_k) \leq
- \Delta \ell_{\mu}(\bar\Delta\bx_k; \bx_k, \nabla f_k) + \frac{L_{\nabla f}+\mu L_G}{2}\Delta_k^2 \\
\leq - \kappa_{\phi,1}\Delta_k\|\nabla\cL_k\| + \kappa_{\phi,2}\Delta_k\|\be_k\| + \kappa_{\phi,3}\Delta_k^2.    
\end{multline}
\end{lemma}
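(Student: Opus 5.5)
The proof has three parts: the local model bound \eqref{equ:local_model_reduction_bound}, the first inequality of \eqref{eq:lyapunov_recursion}, and the conversion to the true KKT residual.

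\textbf{Local model bound.} Since $\bar\Delta\bx_k=\bw_k+\bar\bt_k$, the model reduction \eqref{equ:local_model_reduction_def} splits exactly as the left-hand side of \eqref{eq:normal_contribution} plus the left-hand side of \eqref{eq:tangential_contribution}. Adding the two inequalities of Lemma \ref{lem:normal_tangent_contribution} gives
\[
\Delta\ell_\mu(\bar\Delta\bx_k;\bx_k,\nabla f_k)\geq \kappa_{\mathrm n}\Delta_k\|\bc_k\|+\kappa_{\mathrm t}\Delta_k\|\bar\br_k\|-\Delta_k\|\be_k\|-3\kappa_B\Delta_k^2 .
\]
Set $\kappa_\ell:=\min\{\kappa_{\mathrm n},\kappa_{\mathrm t}\}$ and use $\|\bar\nabla\cL_k\|\leq\|\bar\br_k\|+\|\bc_k\|$. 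This yields \eqref{equ:local_model_reduction_bound}.

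\textbf{First inequality of \eqref{eq:lyapunov_recursion}.} The segment $[\bx_k,\bx_{k+1}]$ lies in $\Omega$ by convexity, so Taylor expansion applies on it.
\begin{itemize}
\item By the descent lemma with $L_{\nabla f}$-Lipschitz $\nabla f$: $f(\bx_{k+1})\leq f_k+\nabla f_k^\top\bar\Delta\bx_k+\frac{L_{\nabla f}}{2}\|\bar\Delta\bx_k\|^2$.
\item By the integral form of Taylor's theorem with $L_G$-Lipschitz $G$: $\|c(\bx_{k+1})-\bc_k-G_k\bar\Delta\bx_k\|\leq \frac{L_G}{2}\|\bar\Delta\bx_k\|^2$. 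The triangle inequality then gives $\|c(\bx_{k+1})\|\leq\|\bc_k+G_k\bar\Delta\bx_k\|+\frac{L_G}{2}\|\bar\Delta\bx_k\|^2$.
\end{itemize}
Combining these and using $\|\bar\Delta\bx_k\|\leq\Delta_k$ gives the first inequality.

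\textbf{Conversion to $\|\nabla\cL_k\|$.} Since $\bar\br_k-\br_k=P_k\be_k$ and $\|P_k\|\leq1$, we have $\|\bar\br_k\|\geq\|\br_k\|-\|\be_k\|$. Hence $\|\bar\nabla\cL_k\|\geq\|\bar\br_k\|\vee\|\bc_k\|\geq\frac12(\|\br_k\|+\|\bc_k\|)-\frac12\|\be_k\|$, and also $\|\br_k\|+\|\bc_k\|\geq\|\nabla\cL_k\|$. Plugging into \eqref{equ:local_model_reduction_bound} and then into the first inequality gives the claim with
\[
\kappa_{\phi,1}=\kappa_\ell/2,\qquad \kappa_{\phi,2}=1+\kappa_\ell/2,\qquad \kappa_{\phi,3}=3\kappa_B+(L_{\nabla f}+\mu L_G)/2 .
\]

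\textbf{Main obstacle.} The only real subtlety is this last conversion: the bound must be in terms of the true residual $\|\nabla\cL_k\|$, while the model reduction is controlled by the surrogate $\|\bar\nabla\cL_k\|$. The projection argument above handles it, at the cost of enlarging the coefficient on $\|\be_k\|$. Everything else is bookkeeping once Lemma \ref{lem:normal_tangent_contribution} is available.
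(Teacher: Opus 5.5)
Your proof is correct and follows the paper's argument: the same splitting via Lemma~\ref{lem:normal_tangent_contribution} with $\kappa_\ell=\min\{\kappa_{\mathrm n},\kappa_{\mathrm t}\}$, the same Taylor bounds, and the same identity $\bar\br_k-\br_k=P_k\be_k$. The only difference is the final conversion: the paper applies the reverse triangle inequality directly to the stacked vectors, $\|\bar\nabla\cL_k\|\geq\|\nabla\cL_k\|-\|P_k\be_k\|\geq\|\nabla\cL_k\|-\|\be_k\|$, which gives $\kappa_{\phi,1}=\kappa_\ell$ and $\kappa_{\phi,2}=1+\kappa_\ell$, whereas your detour through "max $\geq$ half-sum" loses a factor of two in $\kappa_{\phi,1}$; this is harmless, since the lemma only asserts that such constants exist.
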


Lemma \ref{lem:model_reduction_recursion} shows how the local model reduction yields a one-step decrease estimate for the Lyapunov function. In \eqref{eq:lyapunov_recursion}, the leading negative term is proportional to the KKT residual $\|\nabla\cL_k\|$, while the remaining two positive terms act as perturbations. In fact, the second-order term $\Delta_k^2$ is controlled by the shrinking trust-region radius. Thus, the main task is to control the tracking-error term $\|\be_k\|$. The next lemma fulfills this goal. It shows that, with appropriate choices of $(\Delta_k, \nu_k, N_k)$ (including the \textit{online, single-sample-per-step setting}), the cumulative tracking error term $\sum_k\Delta_k \|\be_k\|$ remains finite. Consequently, the combination of our normal-tangential steps yields sufficient Lyapunov reduction.

\begin{lemma}\label{lem:tracking_error_summability}
Under Assumptions \ref{ass:regularity} and \ref{ass:oracle}, for any $k \geq 1$, we have
\begin{equation}\label{eq:tracking_error_bound_general_batch}
\mE[\|\be_k\|] \leq \Pi_{1:k}\mE[\|\be_0\|] + L_{\nabla f}\sum_{s=1}^{k} \Pi_{s:k}\Delta_{s-1} + 2\sqrt{2}\,\sigma_0 \left(\sum_{s=1}^{k}\frac{\nu_s^\pp \Pi_{s+1:k}^\pp}{N_s^{\pp-1}}\right)^{1/\pp},
\end{equation}
where $\Pi_{s:t} \coloneqq \prod_{j=s}^{t}(1-\nu_j)$ for $s\leq t$ and $\Pi_{s:t}\coloneqq 1$ for $s>t$. Moreover, suppose the sequences $\{\Delta_k,\nu_k,N_k\}_{k\geq 0}$ (cf. \eqref{equ:seq}) are chosen with exponents satisfying
\begin{equation}\label{eq:tracking_schedule_general_batch}
	0.5 < a_1 \leq 1, \qquad 0 < a_2 < 2 a_1 - 1, \qquad a_3 \geq 0, \qquad a_1 + (a_2 + a_3) (\pp-1) / \pp > 1,
\end{equation}
then we have $\sum_{k=0}^{\infty} \Delta_k\mE[\|\be_k\|] < \infty$.
\end{lemma}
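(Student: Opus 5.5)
The plan is to unroll the one-step recursion for the tracking error $\be_k$ into three pieces, bound each in expectation to get \eqref{eq:tracking_error_bound_general_batch}, and then check summability of each piece against $\Delta_k$ under \eqref{eq:tracking_schedule_general_batch}.

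\textbf{Step 1: unrolling.} Subtracting $\nabla f_k$ from \eqref{eq:mk_gk_def} gives, for $k\geq1$,
\begin{equation*}
\be_k=(1-\nu_k)\be_{k-1}+(1-\nu_k)(\nabla f_{k-1}-\nabla f_k)+\nu_k\bzeta_k,\qquad \bzeta_k\coloneqq \bg_k-\nabla f_k .
\end{equation*}
Unrolling down to $\be_0$ yields
\begin{equation*}
\be_k=\Pi_{1:k}\be_0+\sum_{s=1}^k\Pi_{s:k}(\nabla f_{s-1}-\nabla f_s)+\sum_{s=1}^k\nu_s\Pi_{s+1:k}\bzeta_s .
\end{equation*}
I would then bound the three pieces separately.

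\textbf{Step 2: the three bounds.}
\begin{itemize}
\item The first piece is simply $\Pi_{1:k}\mE\|\be_0\|$.
\item For the drift term, Assumption \ref{ass:regularity} and $\|\bar\Delta\bx_{s-1}\|\le\Delta_{s-1}$ give $\|\nabla f_{s-1}-\nabla f_s\|\le L_{\nabla f}\Delta_{s-1}$.
\item The noise term is a weighted sum of martingale differences with respect to $\{\mF_s\}$. Use Jensen, $\mE\|\cdot\|\le(\mE\|\cdot\|^\pp)^{1/\pp}$. Then apply the von Bahr--Esseen type inequality for $\pp\in(1,2]$ in Hilbert space, $\mE\|\sum_s X_s\|^\pp\le 2^{2-\pp}\sum_s\mE\|X_s\|^\pp$ for martingale differences (this can be derived by conditioning and the elementary inequality $\|a+b\|^\pp\le\|a\|^\pp+\pp\langle\|a\|^{\pp-2}a,b\rangle+2^{2-\pp}\|b\|^\pp$).
\item Apply the same inequality once more inside $\bzeta_s$, which is an average of $N_s$ conditionally i.i.d. centered terms. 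This gives $\mE[\|\bzeta_s\|^\pp\mid\mF_{s-1}]\le 2^{2-\pp}\sigma_0^\pp/N_s^{\pp-1}$.
\item Combining the two applications produces the constant $(2^{2-\pp})^{2/\pp}\le 2\sqrt2$, which proves \eqref{eq:tracking_error_bound_general_batch}.
\end{itemize}

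\textbf{Step 3: summability.} The condition $a_2<2a_1-1\le1$ forces $a_2<1$. Hence $\Pi_{s:k}\le\exp(-\sum_{j=s}^k\nu_j)\le\exp\!\big(-c[(k+1)^{1-a_2}-s^{1-a_2}]\big)$.
\begin{itemize}
\item \emph{Initial term.} $\Pi_{1:k}\le \exp(-c k^{1-a_2})$ decays faster than any polynomial, so $\sum_k\Delta_k\Pi_{1:k}<\infty$.
\item \emph{Key auxiliary lemma.} For a polynomially varying sequence $b_s=s^{-\beta}$, I would show $\sum_{s=1}^k\nu_s\Pi_{s+1:k}b_s\lesssim b_k$. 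Split at $s=k/2$. For $s\le k/2$ the product is $\exp(-ck^{1-a_2})$ times a polynomial factor, hence negligible. For $s>k/2$ we have $b_s\asymp b_k$, and $\sum_s\nu_s\Pi_{s+1:k}\le1$ by telescoping.
\item \emph{Drift term.} Write $\Delta_{s-1}=\nu_s\cdot(\Delta_{s-1}/\nu_s)$ and use $\Pi_{s:k}\le\Pi_{s+1:k}$. The lemma then gives a drift contribution $\lesssim\Delta_k/\nu_k\asymp k^{-(a_1-a_2)}$. After multiplying by $\Delta_k$ this is $k^{-(2a_1-a_2)}$, which is summable since $a_2<2a_1-1$.
\item \emph{Noise term.} Use $\Pi^\pp\le\Pi$ and write $\nu_s^\pp/N_s^{\pp-1}=\nu_s\cdot(\nu_s/N_s)^{\pp-1}$. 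The lemma gives that the sum is $\lesssim(\nu_k/N_k)^{\pp-1}$, so its $1/\pp$ power is $\asymp k^{-(a_2+a_3)(\pp-1)/\pp}$. After multiplying by $\Delta_k$ this is summable exactly when $a_1+(a_2+a_3)(\pp-1)/\pp>1$.
\end{itemize}
The $k=0$ term is finite trivially.

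\textbf{Main obstacle.} I expect the main difficulty to be the auxiliary weighted-sum lemma: making the $s\le k/2$ exponential bound uniform, and handling the ceiling in $N_k$ and the index shift $\Delta_{s-1}$ versus $\Delta_s$. The heavy-tailed moment inequality is also delicate, but it is standard.
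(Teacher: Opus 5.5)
The one step that fails as written is the constant at the end of Step 2. You apply the $\pp$-th moment inequality at two levels: first across $s$, then inside each mini-batch. The constants therefore multiply:
\[
\mE\Bigl\|\sum_{s=1}^k\nu_s\Pi_{s+1:k}(\bg_s-\nabla f_s)\Bigr\|
\le\bigl(2^{2-\pp}\cdot2^{2-\pp}\bigr)^{1/\pp}\sigma_0\Bigl(\sum_{s=1}^k\frac{\nu_s^\pp\Pi_{s+1:k}^\pp}{N_s^{\pp-1}}\Bigr)^{1/\pp}
=2^{2(2-\pp)/\pp}\,\sigma_0\Bigl(\sum_{s=1}^k\frac{\nu_s^\pp\Pi_{s+1:k}^\pp}{N_s^{\pp-1}}\Bigr)^{1/\pp}.
\]
However, $2^{2(2-\pp)/\pp}\le 2^{3/2}$ holds only for $\pp\ge 8/7$, and the factor tends to $4$ as $\pp\downarrow1$. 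So for $\pp\in(1,8/7)$ you have not proved \eqref{eq:tracking_error_bound_general_batch} with the stated factor $2\sqrt2$. The summability conclusion is unaffected, since any finite constant suffices there.

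The repair is to apply the inequality only once, to the flattened array, as the paper does. Order the terms $\nu_s\Pi_{s+1:k}N_s^{-1}(\nabla F(\bx_s;\xi_s^i)-\nabla f_s)$ lexicographically in $(s,i)$ and reveal the samples one at a time. Conditional independence and unbiasedness in Assumption~\ref{ass:oracle} make this a single martingale difference sequence. Its terms have $\pp$-th moments at most $(\nu_s\Pi_{s+1:k}/N_s)^\pp\sigma_0^\pp$, so one application gives the factor $2^{(2-\pp)/\pp}\le 2<2\sqrt2$.

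With that fix your argument is correct, and it differs from the paper's in two places.

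\textbf{Noise term.} The paper uses the first-moment bound of Lemma~\ref{lem:vector_mds_inequality}, $\mE\|\sum_sY_s\|\le2\sqrt2\,\mE[(\sum_s\|Y_s\|^\pp)^{1/\pp}]$, followed by Jensen for $t\mapsto t^{1/\pp}$. You instead pass through $\pp$-th moments via a von Bahr--Esseen-type inequality. Applied once, this gives a slightly better constant.

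\textbf{Summability.} The paper writes the drift and noise sums as recursions $A_k=(1-\nu_k)^mA_{k-1}+\nu_k^{q+1}\varepsilon_k$ and cites Lemma~\ref{lem:polynomial_product_rate}, with $m=\pp$ for the noise. Your split-at-$k/2$ estimate $\sum_{s\le k}\nu_s\Pi_{s+1:k}b_s\lesssim b_k$ is self-contained. Combined with $\Pi^\pp\le\Pi$ and the telescoping identity $\sum_{s=1}^k\nu_s\Pi_{s+1:k}=1-\Pi_{1:k}$, it yields the same rates $\Delta_k/\nu_k$ and $(\nu_k/N_k)^{\pp-1}$. So dropping the power $\pp$ on $\Pi$ costs only a constant.

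The obstacles you anticipate are harmless:
\begin{itemize}
\item for $s\le k/2$, one has $\Pi_{s+1:k}\le\Pi_{\lfloor k/2\rfloor+1:k}\le e^{-ck^{1-a_2}}$ uniformly in $s$;
\item the ceiling only changes constants, since $N_0(s+1)^{a_3}\le N_s\le(N_0+1)(s+1)^{a_3}$;
\item the index shift is absorbed by $\Delta_{s-1}\le2^{a_1}\Delta_s$.
\end{itemize}
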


The key implication of Lemma \ref{lem:tracking_error_summability} is that $\sum_k \Delta_k\|\be_k\|<\infty$ almost surely. Since $a_1 > 0.5$ also implies $\sum_k\Delta_k^2 < \infty$, we note that the two positive terms in \eqref{eq:lyapunov_recursion} are summable, and can therefore be absorbed and do not offset the cumulative decrease induced by the KKT residual.

We will discuss the condition \eqref{eq:tracking_schedule_general_batch} in Remark \ref{rem:3.6}. Before doing so, with these lemmas in place, we establish the global convergence of Algorithm \ref{alg:TRSQP_HT} in the following theorem. In particular, the theorem shows that the KKT residual of the primal iterates $\bx_k$, together with the least-squares dual estimate $\blambda_k = -[G_k G_k^{\top}]^{-1} G_k \nabla f_k$, converges to zero almost surely from \textbf{any} initialization.

\begin{theorem}\label{thm:main}
Under Assumptions \ref{ass:regularity} and \ref{ass:oracle} and supposing  the sequences $\{\Delta_k,\nu_k,N_k\}_{k\geq 0}$ are chosen to satisfy \eqref{eq:tracking_schedule_general_batch}, then we have
\begin{equation*}
\lim_{k \rightarrow \infty} \left(\|\nabla f_k + G_k^{\top} \blambda_k\| + \|\bc_k\|\right) = 0 \quad\quad \text{almost surely}.
\end{equation*} 	
\end{theorem}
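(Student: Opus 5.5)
Our plan follows the standard two-stage argument for diminishing-stepsize methods: first a $\liminf$ statement from summability, then an upgrade to a full limit using the Lipschitz continuity of the KKT residual and $\|\bx_{k+1}-\bx_k\|\le\Delta_k$.

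\textbf{Stage 1: summability and $\liminf$.} Sum the Lyapunov recursion \eqref{eq:lyapunov_recursion} of Lemma \ref{lem:model_reduction_recursion} from $k=0$ to $K$. Since $\phi_\mu$ is bounded below on $\Omega$ (by $f_{\inf}$, because $\mu\|c\|\ge0$), this yields
\begin{equation*}
\kappa_{\phi,1}\sum_{k=0}^{K}\Delta_k\|\nabla\cL_k\| \le \phi_\mu(\bx_0)-f_{\inf} + \kappa_{\phi,2}\sum_{k=0}^{K}\Delta_k\|\be_k\| + \kappa_{\phi,3}\sum_{k=0}^{K}\Delta_k^2 .
\end{equation*}
By Lemma \ref{lem:tracking_error_summability}, $\sum_k\Delta_k\mE\|\be_k\|<\infty$. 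Monotone convergence then gives $\sum_k\Delta_k\|\be_k\|<\infty$ almost surely. Moreover, $a_1>0.5$ gives $\sum_k\Delta_k^2<\infty$. Hence $\sum_k\Delta_k\|\nabla\cL_k\|<\infty$ almost surely. Because $a_1\le1$, we have $\sum_k\Delta_k=\infty$, so $\liminf_k\|\nabla\cL_k\|=0$ almost surely. Here $\|\nabla\cL_k\|$ is understood as $\|\br_k\|+\|\bc_k\|$, up to a constant factor.

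\textbf{Stage 2: from $\liminf$ to $\lim$.} Fix an outcome in the full-measure event of Stage 1 and argue by contradiction. Suppose $\limsup_k\|\nabla\cL_k\|>2\epsilon$ for some $\epsilon>0$. Then there are infinitely many disjoint index intervals $[s_i,t_i)$ such that $\|\nabla\cL_{s_i}\|>2\epsilon$, $\|\nabla\cL_{t_i}\|<\epsilon$, and $\|\nabla\cL_k\|\ge\epsilon$ for $s_i\le k<t_i$.

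We next need the map $\bx\mapsto P(\bx)\nabla f(\bx)$, together with $c(\bx)$, to be Lipschitz on $\Omega$ with some constant $L_{\cL}$. This follows from Assumption \ref{ass:regularity}:
\begin{itemize}
\item $\nabla f$ is Lipschitz and bounded;
\item $G$ is Lipschitz, and $G$ is bounded by $\sqrt{\kappa_{2,G}}$;
\item $(GG^\top)^{-1}$ is Lipschitz, since $GG^\top\succeq\kappa_{1,G}I$ and $A^{-1}-B^{-1}=A^{-1}(B-A)B^{-1}$.
\end{itemize}
One subtlety is that the uniform bounds in \eqref{eq:regularity_bounds} are stated only at the iterates, not on all of $\Omega$. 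We handle this by applying the Lipschitz estimate only between consecutive iterates, bounding the segment terms via Taylor expansions along $[\bx_k,\bx_{k+1}]$ with $\|\bx_{k+1}-\bx_k\|\le\Delta_k$. Alternatively, we read the assumption as holding on $\Omega$, as is customary.

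With this in hand, the contradiction runs as follows.
\begin{itemize}
\item The Lipschitz bound gives $\epsilon\le\|\nabla\cL_{s_i}\|-\|\nabla\cL_{t_i}\|\le L_{\cL}\sum_{k=s_i}^{t_i-1}\Delta_k$.
\item Therefore $\sum_{k=s_i}^{t_i-1}\Delta_k\|\nabla\cL_k\|\ge\epsilon\sum_{k=s_i}^{t_i-1}\Delta_k\ge\epsilon^2/L_{\cL}$ for every $i$.
\item Summing over the infinitely many disjoint intervals makes $\sum_k\Delta_k\|\nabla\cL_k\|$ infinite, contradicting Stage 1.
\end{itemize}
Thus $\|\nabla\cL_k\|\to0$ almost surely. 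Since $\nabla f_k+G_k^\top\blambda_k=P_k\nabla f_k=\br_k$, this is exactly the claim.

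\textbf{Main obstacle.} The real difficulty is already packaged in Lemmas \ref{lem:model_reduction_recursion} and \ref{lem:tracking_error_summability}. What remains is routine bookkeeping with one delicate point. The upcrossing argument requires a deterministic bound on step lengths, here $\|\bx_{k+1}-\bx_k\|\le\Delta_k$ from the trust region. It does not work with stochastic step lengths, and this is why the trust-region normalization matters. The other point requiring care is the Lipschitz continuity of $P(\bx)\nabla f(\bx)$, which relies on the uniform lower bound $\kappa_{1,G}$.
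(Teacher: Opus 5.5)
Your proposal is correct and follows essentially the same route as the paper. The paper likewise sums the Lyapunov recursion and uses Lemma~\ref{lem:tracking_error_summability} with $\sum_k\Delta_k^2<\infty$ to get $\sum_k\Delta_k\|\nabla\cL_k\|<\infty$ and a $\liminf$, then upgrades to a limit by a crossing argument built on $\|\bx_{k+1}-\bx_k\|\le\Delta_k$ and a consecutive-iterate Lipschitz bound on $\br_k=P_k\nabla f_k$. The only cosmetic difference is that the paper runs the crossing argument separately for $\|\br_k\|$ and $\|\bc_k\|$ rather than for the combined residual, and it needs no segment-wise estimates: bounds at the two iterates plus Lipschitz continuity of $\nabla f$ and $G$ on $\Omega$ already suffice.
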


Note that our almost-sure convergence result matches those established for line-search-based SSQP methods \citep{Na2022adaptive, Na2023Inequality, Curtis2025Almost} and trust-region-based SSQP methods \citep{Fang2024Fully, Fang2024Trust}, which all rely on bounded-variance assumptions or require increasing batch sizes together with objective function value estimates. Furthermore, this type of almost-sure guarantee is stronger than the $\text{liminf}$-type convergence guarantees implied by non-asymptotic analyses of normalized SGD for unconstrained nonconvex optimization \cite{Cutkosky2020Momentum,Cutkosky2021High,Hubler2024Gradient,Sun2025Revisiting,Liu2025Nonconvex,Fang2026Normalization}. In addition, our analysis does not require any prior knowledge of the total number of iterations or problem-dependent algorithmic constants, such as Lipschitz parameters or the initial optimality gap $f(\bx_0)-f_{\inf}$, quantities that are typically needed for tuning learning rates and momentum parameters in existing normalized stochastic gradient methods.

\begin{remark}[\textbf{Interplay between trust-region radius $\Delta_k$, momentum weight $\nu_k$, and batch size $N_k$}]\label{rem:3.6}

For any fixed moment index $\pp\in(1,2]$ and batch size exponent $a_3 \geq 0$, condition \eqref{eq:tracking_schedule_general_batch} is equivalent to
\begin{equation}\label{eq:tracking_schedule_general_batch_2}
\max\left\{0.5, \, \frac{2\pp-1}{3\pp-2} - \frac{\pp-1}{3\pp-2}a_3\right\} < a_1 \leq 1, \quad \max\left\{0, \, \frac{\pp(1-a_1)}{\pp-1}-a_3\right\} < a_2 < 2a_1-1.
\end{equation}
This form makes the role of $a_3$ transparent. Increasing $a_3$ reduces the heavy-tailed martingale contribution in \eqref{eq:tracking_error_bound_general_batch}. Thus, larger $a_3$ relaxes the requirements on the trust-region radius and momentum sequences, as reflected in \eqref{eq:tracking_schedule_general_batch_2}. This gives a \textit{trade-off} between per-iteration sampling cost and noise control: larger mini-batches require more samples per iteration, but they reduce the heavy-tailed martingale effect and allow the trust-region radius sequence $\{\Delta_k\}_{k}$ to decay more slowly. Indeed,~a~slower~decay of $\{\Delta_k\}_{k\ge0}$ can be advantageous in practice, as it leads to less conservative trial steps.

If $a_3$ is chosen such that $a_3 \geq \pp / [2(\pp-1)]$, then the lower bound on $a_2$ in \eqref{eq:tracking_schedule_general_batch_2} becomes $0$ for any $a_1 \in (0.5, 1]$. In this case, the conditions in \eqref{eq:tracking_schedule_general_batch_2} reduce to $0.5 < a_1 \leq 1$, $0 < a_2 < 2a_1-1$. Note that the threshold for $a_3$ depends on $\pp$; this simplification only indicates that once $a_3$ is large enough, the admissible region for $(a_1,a_2)$ no longer explicitly depends on $\pp$.

When $a_3 = 0$, the batch size is fixed: $N_k \equiv \lceil N_0\rceil$. In particular, $N_0=1$ is allowed, so our theory covers online trust-region methods that use only a single stochastic gradient sample per iteration. In this fixed-batch setting, even if the moment index $\pp$ is unknown, one can still adopt the universal choice $a_1 = 1$ and $a_2 \in (0, 1)$, which satisfies \eqref{eq:tracking_schedule_general_batch_2} for all $\pp\in(1,2]$.
\end{remark}

\section{Numerical Experiments}\label{sec:experiment}

In this section, we demonstrate the empirical performance of Algorithm~\ref{alg:TRSQP_HT} under heavy-tailed stochastic gradient noise. We evaluate TR-SSQP on two classes of constrained problems: a subset of CUTEst benchmark problems and constrained logistic regression problems. We present a subset of the results; full experimental details and complete results, including comparisons with existing competing methods on constrained logistic regression problems, are deferred to Appendix \ref{app:exper}.

$\bullet$ \textbf{CUTEst benchmark problems.}
We first evaluate TR-SSQP on constrained nonlinear problems from the CUTEst test set. We generate unbiased stochastic gradients using Pareto and Student-$t$ heavy-tailed noise, and vary the moment order $\pp$, the construction of $B_k$, and the batch-size regime. 
Figure \ref{fig:cutest_t_main} presents boxplots of the KKT residuals across benchmark problems, along with performance profile plots. See Appendix \ref{app:cutest} for details of the experimental setup. We have the following observations.

\noindent$\textbf{(a)}$ \textbf{Curvature becomes useful once heavy-tailed noise is averaged.}
One practical advantage of the trust-region method is that it can exploit curvature models without requiring positive definiteness of $Z_k^\top B_k Z_k$; see Remark~\ref{rem:2.4}. This flexibility is evident in Figure~\ref{fig:cutest_t_main}. Under the increasing-batch regime, curvature-aware choices of $B_k$ are particularly effective in the most heavy-tailed setting $(\pp=1.2)$: the residual boxes in panel (b) concentrate near the target accuracy, while the online regime in panel (a) exhibits a wider spread. The appendix performance profiles provide complementary evidence. Across both Pareto and Student-$t$ noise, the estimated and averaged Hessian variants often solve a larger fraction of problems within a small iteration ratio when batching is used, with SR1 also benefiting in several settings. In contrast, with one sample per iteration, curvature information can be partially obscured by heavy-tailed noise, and the identity model remains competitive. Overall, the experiments suggest a favorable interaction between flexible $B_k$ constructions and mini-batch~averaging:~more~accurate stochastic information makes curvature more informative for constructing trial steps.

\begin{figure}[t]
\centering
\resizebox{0.99\textwidth}{!}{
\begin{minipage}[t]{0.235\textwidth}
\centering
\includegraphics[width=\linewidth]{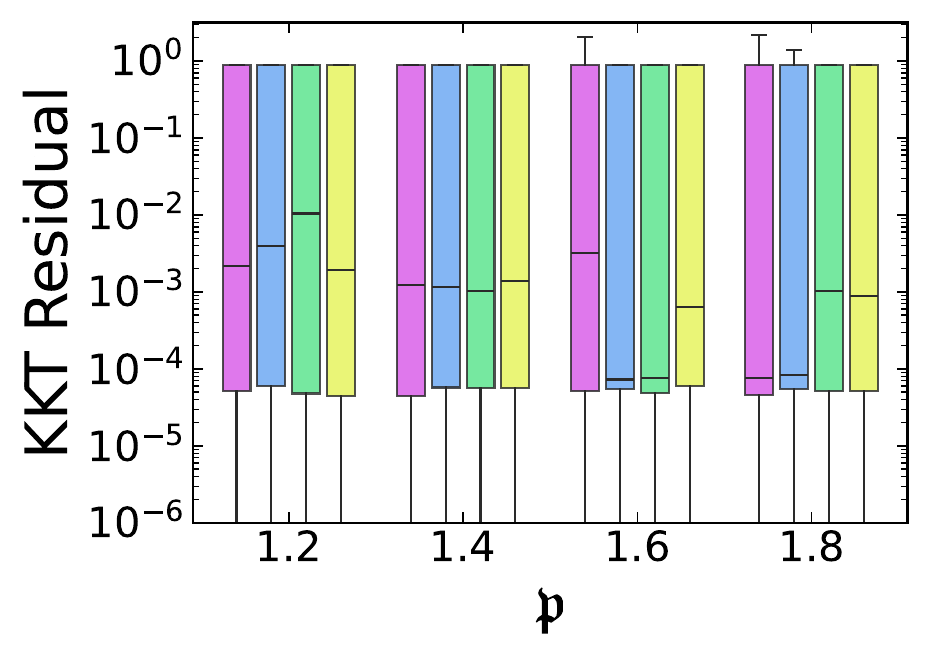}\\
{\footnotesize (a) Single batch}
\end{minipage}
\hfill
\begin{minipage}[t]{0.235\textwidth}
\centering
\includegraphics[width=\linewidth]{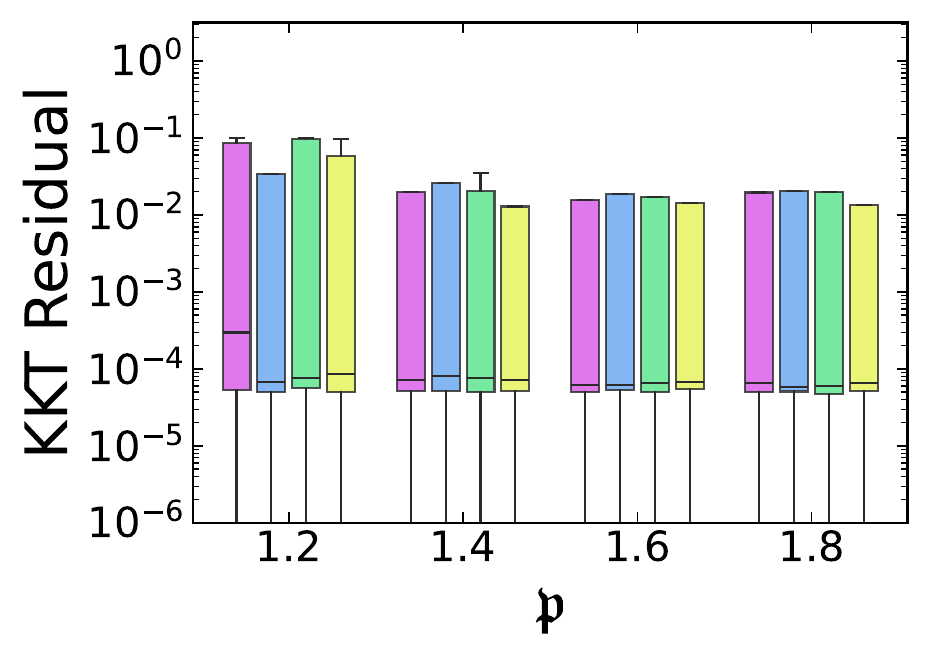}\\
{\footnotesize \ (b) Increasing batch}
\end{minipage}
\hfill
\begin{minipage}[t]{0.25\textwidth}
\centering
\makebox[\linewidth][c]{%
\raisebox{-0.35em}{%
\includegraphics[width=\linewidth, height=0.709\linewidth, keepaspectratio=false, trim=1.5cm 0cm 0cm 0cm,clip]{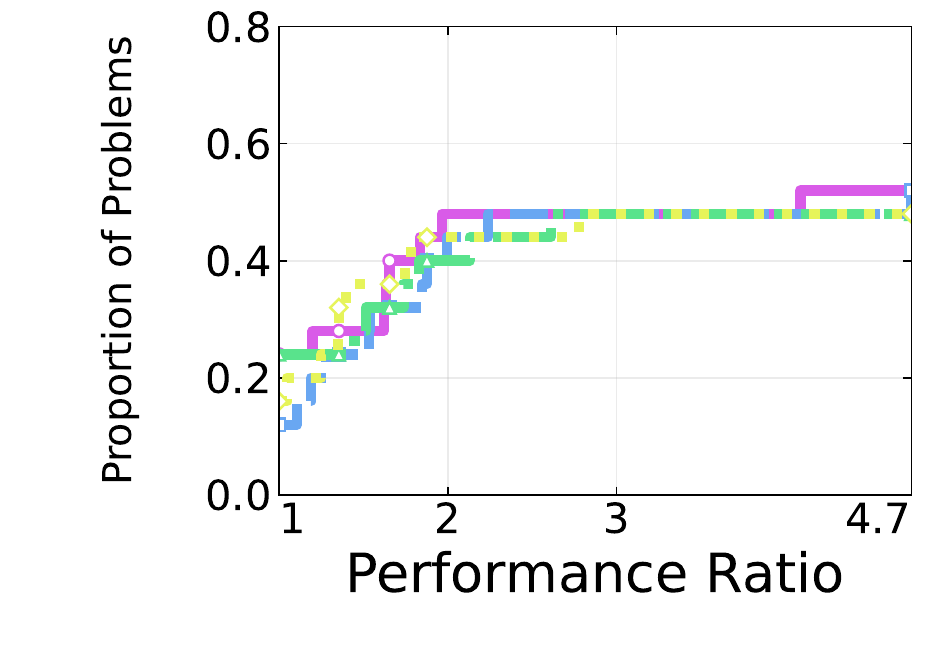}}%
}\\
{\footnotesize \ (c) Single batch ($\pp=1.8$)}
\end{minipage}
\hfill
\begin{minipage}[t]{0.25\textwidth}
\centering
\makebox[\linewidth][c]{%
\raisebox{-0.35em}{%
\includegraphics[width=\linewidth, height=0.709\linewidth, keepaspectratio=false, trim=1.5cm 0cm 0cm 0cm,clip]{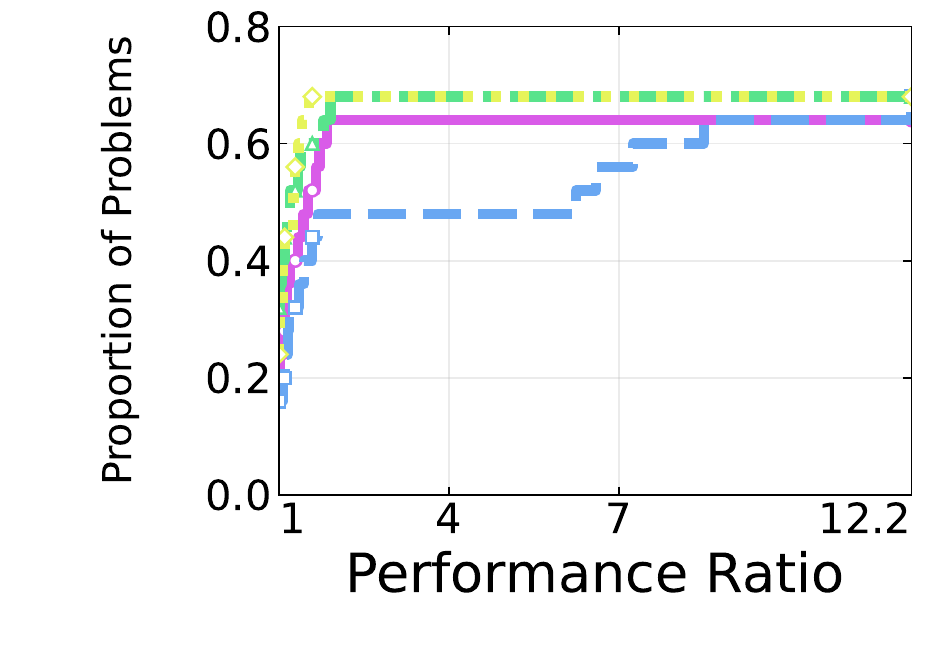}}%
}\\
{\footnotesize (d)~Increasing~batch~($\pp=1.8$)}
\end{minipage}
}
\includegraphics[width=0.4\textwidth]{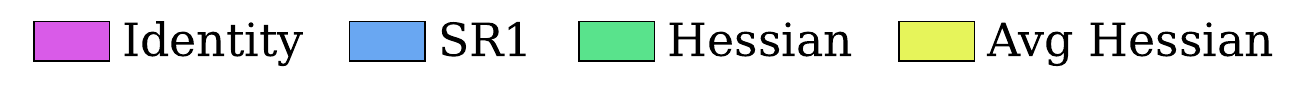}

\caption{\textit{CUTEst results under Student-$t$ heavy-tailed noise. (a)--(b) show KKT residuals for the online and increasing-batch regimes. For each $\pp \in \{1.2,1.4,1.6,1.8\}$, the four boxes correspond to four choices of $B_k$. (c)--(d) show performance profiles with respect to iterations for $\pp=1.8$.}}
\label{fig:cutest_t_main}

\end{figure}

\begin{table*}[t!]
\centering

\resizebox{\linewidth}{!}{%
\renewcommand{\arraystretch}{1}%
\setbox\strutbox=\hbox{\vrule height9.4pt depth2.2pt width0pt}%
\begin{tabular}{|c|c|c|c|c|c|c|c|c|c|c|}
\hline
\multirow{2}{*}{$d$}
& \multirow{2}{*}{Noise}
& \multirow{2}{*}{$\pp$}
& \multicolumn{2}{c|}{Identity}
& \multicolumn{2}{c|}{SR1}
& \multicolumn{2}{c|}{Estimated Hessian}
& \multicolumn{2}{c|}{Averaged Hessian} \\
\cline{4-11}
& & &
Online & Batch
& Online & Batch
& Online & Batch
& Online & Batch \\
\hline
\multirow{9}{*}{50}
& Gaussian & --
& 6.41 & 2.93 & 6.45 & 2.92 & 6.28 & 2.92 & 5.21 & \textbf{2.89} \\
\cline{2-11}
& \multirow{4}{*}{Pareto} & $1.2$
& 43.51 & 56.81 & 132.44 & \textbf{8.30} & 39.02 & 25.17 & 110.23 & 50.60 \\
\cline{3-11}
& & $1.4$
& 103.28 & 80.24 & 95.75 & \textbf{10.84} & 96.24 & 81.75 & 97.04 & 78.74 \\
\cline{3-11}
& & $1.6$
& 28.21 & 8.65 & 24.19 & 7.27 & 20.43 & \textbf{6.82} & 24.75 & 8.05 \\
\cline{3-11}
& & $1.8$
& 16.29 & 5.34 & 15.45 & 5.34 & 11.32 & \textbf{5.15} & 11.79 & 5.25 \\
\cline{2-11}
& \multirow{4}{*}{\shortstack{Student\\$t$}} & $1.2$
& 31.20 & 12.28 & 33.63 & 20.65 & 41.16 & 14.02 & 39.74 & \textbf{8.76} \\
\cline{3-11}
& & $1.4$
& 33.91 & 8.98 & 27.13 & \textbf{8.51} & 30.04 & 17.50 & 31.37 & 8.99 \\
\cline{3-11}
& & $1.6$
& 18.98 & 16.27 & 18.64 & 12.90 & 13.44 & 21.48 & 20.14 & \textbf{11.60} \\
\cline{3-11}
& & $1.8$
& 11.09 & \textbf{4.66} & 13.57 & 5.78 & 11.04 & 4.70 & 15.22 & 4.75 \\
\hline
\end{tabular}%
}

\caption{\textit{KKT residuals ($10^{-2}$) for the synthetic constrained logistic regression with $d=50$. Each entry reports the mean KKT residual over five runs. "Online" and "Batch" denote the single-sample and increasing-batch regimes, respectively. The smallest number on each row is shown in bold. The Gaussian design serves as a light-tailed baseline, while the symmetric Pareto and Student-$t$ designs vary the degree of heavy-tailedness via $\pp$.}}
\label{tab:logistic_results}
\end{table*}

\noindent$\textbf{(b)}$ \textbf{Increasing batches enable more aggressive trust-region updates.}
Remark \ref{rem:3.6} predicts that increasing $N_k$ reduces the heavy-tailed martingale contribution and permits a slower decay of the trust-region radius $\Delta_k$, leading to less conservative steps. The CUTEst results support this trade-off. Comparing panels (a) and (b) of Figure \ref{fig:cutest_t_main}, increasing batches substantially lowers the final KKT residuals across moment orders and all four $B_k$ choices under Student-$t$ noise. The iteration-based performance profiles in panels (c) and (d) show the same effect from a different perspective: for $\pp=1.8$, the increasing-batch regime solves more problems within a small performance ratio, indicating faster convergence in iteration count. The complete Pareto results and the remaining moment orders in Appendix \ref{app:cutest_results} exhibit the same qualitative behavior. Overall, the online setting demonstrates that TR-SSQP can operate with a single stochastic gradient sample per iteration, while increasing batches helps stabilize heavy-tailed gradients, improves curvature-driven trial steps, and supports more aggressive~trust-region updates, as suggested by our theory (Lemma \ref{lem:tracking_error_summability} and Theorem~\ref{thm:main}).

$\bullet$ \textbf{Constrained logistic regression.}
We next consider a synthetic constrained logistic regression problem, which complements the CUTEst experiments by incorporating a Gaussian covariate design and varying the problem dimension. See Appendix~\ref{app:logistic} for full setup and additional discussions. As shown in Table~\ref{tab:logistic_results}, under the Gaussian design, batching consistently improves performance across all choices of $B_k$, with Hessian-based strategies achieving the lowest residuals. Under heavy-tailed designs, curvature information becomes more beneficial after batching; at $d=50$, curvature-aware choices of $B_k$ attain the lowest residuals in most settings, while in the online regime, curvature information is less reliable. Additional results for $d=10$ and $30$, reported in Table \ref{tab:logistic_results_d10_d30}, exhibit a similar qualitative pattern. We further extend the comparison to $d=100$ and include four competing methods. As shown in Table~\ref{tab:logistic_d100}, an increasing-batch TR-SSQP variant attains the lowest residual in each setting, demonstrating promising performance relative to these methods. Overall, these findings further reinforce observations \textbf{(a)}, \textbf{(b)} from the CUTEst experiments.

\section{Conclusion}\label{sec:conclusion}

We propose a new TR-SSQP method for nonlinear equality-constrained stochastic optimization under heavy-tailed noise. The method leverages a normal-tangential step decomposition, gradient normalization, and Polyak momentum to simultaneously control objective reduction, feasibility restoration, and the impact of heavy-tailed noise. A notable feature is that both fixed and growing batch-size regimes are handled within a unified algorithmic and theoretical framework, revealing a quantitative trade-off between per-iteration sampling cost and the aggressiveness of the trust-region and momentum updates. Under standard assumptions, we establish global almost-sure first-order convergence. Numerical experiments demonstrate promising empirical performance through comparisons among TR-SSQP variants and with existing competing constrained stochastic optimization methods.

\bibliographystyle{my-plainnat}
\bibliography{ref}

\clearpage

\appendix


\section{Related work}\label{app:A}

\noindent\textbf{Stochastic SQP methods under heavy-tailed noise.}
A number of SSQP methods have recently been proposed to accommodate heavy-tailed gradient noise \citep{Na2022adaptive,Na2023Inequality,Fang2024Trust,Fang2026High,Berahas2025Sequential}, though their setups differ fundamentally from the finite
$\pp$-th moment assumption considered in this paper. Rather than imposing a moment condition, these methods require gradient estimates to satisfy certain adaptive accuracy conditions with a fixed probability, without assuming unbiasedness or any finite moments. However, these accuracy conditions become progressively more stringent as the algorithm progresses, which effectively necessitates a growing batch size (see Section 3.1 in \citep{Fang2024Trust}). Furthermore, all of these methods require objective function value estimates, which typically demand higher estimation accuracy, and hence even larger batch sizes, than gradient estimates alone. The assumptions on the objective noise also vary: \cite{Na2022adaptive,Na2023Inequality,Fang2024Trust} assume bounded variance, \cite{Berahas2025Sequential} assumes sub-Gaussian noise (implying finite moments of all orders), and \cite{Fang2026High} generalizes to a bounded $\pp$-th moment for $\pp > 1$, but provides only non-asymptotic guarantees, yielding convergence only in~the~liminf sense. 

By contrast, our method assumes only a bounded $\pp$-th moment on the gradient noise, requires~no~objective value estimates, and establishes limit-type almost-sure convergence.

\vskip3pt
\noindent\textbf{Normalization in stochastic optimization.}
Normalized gradient methods have a long history in stochastic optimization \citep{nesterov1984minimization,levy2017online,hazan2015beyond,Yang2023Two,hubler2024parameter,levy2016power}, and their extension to nonconvex settings with fixed batch sizes was enabled by \cite{Cutkosky2020Momentum} through the introduction of Polyak momentum. These early works, however, largely operate under light-tailed noise assumptions, such as bounded variance. 
Progress under heavy-tailed noise has been more recent. \cite{Cutkosky2021High} combined gradient normalization with clipping using iteration-dependent thresholds, while \cite{Hubler2024Gradient} established convergence rates and sample complexities under heavy-tailed noise and proved their near-optimality. \cite{Liu2025Nonconvex} obtained the same optimal rates under generalized
$\pp$-th moment and $(L_0, L_1)$-smoothness conditions. More recently, \cite{Sun2025Revisiting} showed that normalization with momentum alone suffices to achieve optimal convergence in expectation. \cite{Fang2026Normalization} extended the analysis from SGD to stochastically preconditioned SGD, demonstrating the stability of normalization when the Hessian approximation and the gradient estimate are statistically dependent. 

Our work extends the normalization framework to the constrained setting with a trust-region globalization strategy, where the step direction and stepsize are determined jointly, requiring a more delicate treatment of the normalization mechanism. To our knowledge, this is the first work to provide a convergence analysis for constrained stochastic nonlinear optimization under heavy-tailed noise without requiring increasing batch sizes. It is also the first to establish asymptotic convergence and to explore the interplay among stepsize, momentum, and batch size, even in the unconstrained setting.

\vskip3pt

\noindent\textbf{Constraint relaxation strategies.}
The infeasibility between the trust-region ball and the linearized equality constraint is a classical obstacle in trust-region SQP. A seminal remedy is the Celis--Dennis--Tapia (CDT) relaxation \citep{Celis1984Trust}, which replaces the hard linearization $\bc_k+G_k\Delta\bx=\0$ by the residual bound $\|\bc_k+G_k\Delta\bx\|\leq\theta_k$, where $\theta_k = \|\bc_k+G_k\Delta\bx_{k}^{CP}\|$ and $\Delta\bx_{k}^{CP}$ is the Cauchy point \cite[Chapter 4.1]{Nocedal2006Numerical} for the problem
\begin{equation}\label{pro:Ceils}
\min_{\Delta\bx\in\mR^d}\ \|\bc_k + G_k\Delta\bx\|\; \quad\text{s.t.}\quad \|\Delta\bx\|\leq\Delta_k.
\end{equation}
This guarantees a nonempty relaxed feasible set, but it also turns the SQP step into the CDT subproblem, namely the minimization of a quadratic model over the intersection of the two ellipsoids $\|\bc_k+G_k\Delta\bx\|\leq\theta_k$ and $\|\Delta\bx\|\leq \Delta_k$. Consequently, the resulting subproblem is substantially harder than the standard trust-region SQP subproblem and has motivated a separate line of algorithms and convergence analyses \citep{Yuan1990subproblem,Yuan1991dual,Zhang1992Computing}. 
A second classical strategy, proposed by \cite{Vardi1985Trust}, rescales the linearized constraint to $\gamma_k\bc_k+G_k\Delta\bx=\0$, with $\gamma_k\in(0,1]$ chosen so that the trust-region constraint in \eqref{def:SQPsubproblem} becomes inactive. While this avoids the two-ellipsoid geometry, the original construction is largely existential and offers little practical guidance for selecting $\gamma_k$. Subsequent trust-region SQP methods refined this idea through the Byrd--Omojokun normal-tangential decomposition, in which a normal step reduces linearized infeasibility and a tangential step improves the objective in $\ker(G_k)$ \citep{Byrd1987Trust,Omojokun1989Trust}. In these methods, however, the division of the trust-region radius between the two components is typically controlled by a user-chosen parameter, which can make either step overly conservative; this has motivated more recent adaptive, parameter-free relaxation rules \citep{Fang2024Fully}.~Our paper follows this line and extends the prior work to heavy-tailed gradients with momentum updates.
A complementary line of work avoids the infeasible linearized subproblem altogether, either by maintaining feasibility throughout the iteration or by using filter- or funnel-based globalization strategies to balance objective reduction and constraint violation \citep{Wright2004Feasible, Fletcher2002Global, Curtis2018Complexity}.

\section{Proof of Section \ref{sec:method}}

\subsection{Proof of Lemma \ref{lemma:Ful_cauchy}}

By the formula of the Cauchy point $\bu_k^{CP}$ in \citep[(4.12)]{Nocedal2006Numerical}, we know that if $\|Z_k^{\top}(\nabla f_k+B_k\bw_k)\|^3\leq\Delta_k^t(\nabla f_k+B_k\bw_k)^{\top}Z_kZ_k^{\top}B_kZ_kZ_k^{\top}(\nabla f_k+B_k\bw_k)$, then $\bu_k^{CP}=-\|Z_k^{\top}(\nabla f_k+B_k\bw_k)\|^2/(\nabla f_k+B_k\bw_k)^{\top}Z_kZ_k^{\top}B_kZ_kZ_k^{\top}(\nabla f_k+B_k\bw_k)\cdot Z_k^{\top}(\nabla f_k+B_k\bw_k)$. In this case, using $\|Z_k\|\leq 1$, we~have
\begin{multline*}
q_k(\bu_k^{CP})-q_k(\0)=\frac{1}{2}(Z_k\bu_k^{CP})^{\top}B_kZ_k\bu_k^{CP} +  (\nabla f_k+B_k\bw_k)^{\top}Z_k\bu_k^{CP}\\
=-\frac{1}{2}\frac{\|Z_k^{\top}(\nabla f_k+B_k\bw_k)\|^4}{(\nabla f_k+B_k\bw_k)^{\top}Z_kZ_k^{\top}B_kZ_kZ_k^{\top}(\nabla f_k+B_k\bw_k)}\leq-\frac{1}{2}\frac{\|Z_k^{\top}(\nabla f_k+B_k\bw_k)\|^2}{\|B_k\|}.
\end{multline*}
Otherwise, $\bu_k^{CP} = -\Delta_k^t/\|Z_k^{\top}(\nabla f_k+B_k\bw_k)\|\cdot Z_k^{\top}(\nabla f_k+B_k\bw_k)$. In this case, we have
\begin{align*}
q_k(\bu_k^{CP})&-q_k(\0) =\frac{1}{2}(Z_k\bu_k^{CP})^{\top}B_kZ_k\bu_k^{CP} + (\nabla f_k+B_k\bw_k)^{\top}Z_k\bu_k^{CP}\\
& =\frac{(\nabla f_k+B_k\bw_k)^{\top}Z_kZ_k^{\top}B_kZ_kZ_k^{\top}(\nabla f_k+B_k\bw_k)}{2\|Z_k^{\top}(\nabla f_k+B_k\bw_k)\|^2}\left(\Delta_k^t\right)^2-\|Z_k^{\top}(\nabla f_k+B_k\bw_k)\|\Delta_k^t\\
& \leq \frac{1}{2}\|B_k\|\left(\Delta_k^t\right)^2 -\|Z_k^{\top}(\nabla f_k+B_k\bw_k)\|\Delta_k^t.
\end{align*}
Combining the above two cases, we have
\begin{equation*}
q_k(\bu_k^{CP})-q_k(\0) \leq -\min\left\{-\frac{\|B_k\|\left(\Delta_k^t\right)^2}{2} + \|Z_k^{\top}(\nabla f_k+B_k\bw_k)\|\Delta_k^t,\;\frac{\|Z_k^{\top}(\nabla f_k+B_k\bw_k)\|^2}{2\|B_k\|}\right\}.
\end{equation*}
Using the fact that
\begin{align*}
& -\frac{1}{2}\|B_k\|\left(\Delta_k^t\right)^2 + \|Z_k^{\top}(\nabla f_k+B_k\bw_k)\|\Delta_k^t\\ 
& = -\frac{\|B_k\|}{2}\rbr{\Delta_k^t - \frac{\|Z_k^{\top}(\nabla f_k+B_k\bw_k)\|}{\|B_k\|}}^2 + \frac{\|Z_k^{\top}(\nabla f_k+B_k\bw_k)\|^2}{2\|B_k\|} \leq \frac{\|Z_k^{\top}(\nabla f_k+B_k\bw_k)\|^2}{2\|B_k\|},
\end{align*}
we complete the proof.

\section{Proofs of Section \ref{sec:analysis}}

\subsection{Proof of Lemma \ref{lem:normal_tangent_contribution}}

Since $G_k\bar\bt_k=\0$ and $G_k\bw_k=-\gamma_k\bc_k$, we have $\bc_k+G_k\bar\Delta\bx_k=(1-\gamma_k)\bc_k$, and hence $\|\bc_k\|-\|\bc_k+G_k\bar\Delta\bx_k\|=\gamma_k\|\bc_k\|$. For the normal step, $\br_k\in\ker(G_k)$ and $\bw_k\in\mathrm{im}(G_k^\top)$ give $\br_k^\top\bw_k=0$. Together with $\nabla f_k=\br_k-G_k^\top\blambda_k$, this yields $\nabla f_k^\top\bw_k=-\blambda_k^\top G_k\bw_k=\gamma_k\blambda_k^\top\bc_k$. Using $\|\blambda_k\|\leq\kappa_{\nabla f}/\sqrt{\kappa_{1,G}}$, we obtain
\begin{equation*}
-\nabla f_k^\top\bw_k+\mu\left(\|\bc_k\|-\|\bc_k+G_k\bar\Delta\bx_k\|\right)\geq\tau_\mu\gamma_k\|\bc_k\|,
\end{equation*}
where $\tau_\mu\coloneqq\mu-\kappa_{\nabla f}/\sqrt{\kappa_{1,G}}>0$.

To bound $\gamma_k$, note that $\|\bv_k\|=\|G_k^\top(G_kG_k^\top)^{-1}\bc_k\|\leq\|\bc_k\|/\sqrt{\kappa_{1,G}}\leq\kappa_c/\sqrt{\kappa_{1,G}}$. If $\gamma_k<1$, then $\gamma_k=\theta\Delta_k/\|\bv_k\|\geq\theta\sqrt{\kappa_{1,G}}\Delta_k/\kappa_c$. If $\gamma_k=1$, then $\gamma_k\geq\Delta_k/\Delta_0$, since $\Delta_k\leq\Delta_0$. The first inequality therefore follows with $\kappa_{\mathrm n}\coloneqq\tau_\mu\min\{\theta\sqrt{\kappa_{1,G}}/\kappa_c,\,1/\Delta_0\}>0$.

We now prove the tangential estimate. Since $\bar\bt_k=Z_k\bar\bu_k$ and $P_k\bar\bt_k=\bar\bt_k$, the fractional Cauchy decrease condition and Lemma~\ref{lemma:Ful_cauchy} give
\begin{equation*}
\bar q_k(\bar\bu_k)-\bar q_k(\0)=\frac{1}{2}\bar\bt_k^\top B_k\bar\bt_k+\bar\br_k^\top\bar\bt_k+\bw_k^\top B_k\bar\bt_k \leq-\eta\|P_k(\bbm_k+B_k\bw_k)\|\Delta_k^t+\frac{\eta}{2}\|B_k\|(\Delta_k^t)^2.
\end{equation*}
Using $\eta\leq1$, $\|B_k\|\leq\kappa_B$, $\|\bw_k\|\leq\Delta_k$, and $\|\bar\bt_k\|\leq\Delta_k^t\leq\Delta_k$, we obtain
\begin{equation*}
\bar\br_k^\top\bar\bt_k\leq-\eta\|\bar\br_k+P_kB_k\bw_k\|\Delta_k^t+2\kappa_B\Delta_k^2\leq-\eta\|\bar\br_k\|\Delta_k^t+3\kappa_B\Delta_k^2,
\end{equation*}
where the second inequality uses $\|P_kB_k\bw_k\|\leq\kappa_B\Delta_k$.

Moreover, $\|\bw_k\|\leq\theta\Delta_k$ gives $\Delta_k^t=\sqrt{\Delta_k^2-\|\bw_k\|^2}\geq\sqrt{1-\theta^2}\Delta_k$. Since $\bar\br_k-\br_k=P_k\be_k$ and $P_k\bar\bt_k=\bar\bt_k$, we also have $\nabla f_k^\top\bar\bt_k=\br_k^\top\bar\bt_k=\bar\br_k^\top\bar\bt_k-\be_k^\top\bar\bt_k$. Combining these bounds yields
\begin{equation*}
-\nabla f_k^\top\bar\bt_k\geq\eta\sqrt{1-\theta^2}\Delta_k\|\bar\br_k\|-\Delta_k\|\be_k\|-3\kappa_B\Delta_k^2,
\end{equation*}
which proves the second inequality with $\kappa_{\mathrm t}\coloneqq\eta\sqrt{1-\theta^2}>0$.

\subsection{Proof of Lemma \ref{lem:model_reduction_recursion}}

We first prove the local model reduction bound. Since $G_k\bar\Delta\bx_k=-\gamma_k\bc_k$, \eqref{equ:local_model_reduction_def} and Lemma~\ref{lem:normal_tangent_contribution} give
\begin{multline*}
\Delta\ell_\mu(\bar\Delta\bx_k;\bx_k,\nabla f_k)=-\nabla f_k^\top\bw_k-\nabla f_k^\top\bar\bt_k+\mu\gamma_k\|\bc_k\| \\
\geq\kappa_{\mathrm n}\Delta_k\|\bc_k\|+\kappa_{\mathrm t}\Delta_k\|\bar\br_k\|-\Delta_k\|\be_k\|-3\kappa_B\Delta_k^2.
\end{multline*}
Since $\|\bar\nabla\cL_k\|\leq\|\bar\br_k\|+\|\bc_k\|$, the first claim follows with $\kappa_{\ell}\coloneqq\min\{\kappa_{\mathrm n},\kappa_{\mathrm t}\}>0$.

We next prove the one-step recursion. Taylor's theorem and Assumption~\ref{ass:regularity} give $f_{k+1}\leq f_k+\nabla f_k^\top\bar\Delta\bx_k+L_{\nabla f}\Delta_k^2/2$ and $\|\bc_{k+1}\|\leq\|\bc_k+G_k\bar\Delta\bx_k\|+L_G\Delta_k^2/2$. Using $\ell_\mu(\0;\bx_k,\nabla f_k)=\phi_\mu(\bx_k)$ and the local model reduction bound, we obtain
\begin{multline*}
\phi_\mu(\bx_{k+1})-\phi_\mu(\bx_k)\leq-\Delta\ell_\mu(\bar\Delta\bx_k;\bx_k,\nabla f_k)+\frac{L_{\nabla f}+\mu L_G}{2}\Delta_k^2 \\
\leq-\kappa_{\ell}\Delta_k\|\bar\nabla\cL_k\|+\Delta_k\|\be_k\|+\kappa_{\phi,4}\Delta_k^2,
\end{multline*}
where $\kappa_{\phi,4}\coloneqq3\kappa_B+(L_{\nabla f}+\mu L_G)/2>0$. Finally, $\bar\br_k-\br_k=P_k\be_k$ implies $\|\bar\nabla\cL_k\|\geq\|\nabla\cL_k\|-\|P_k\be_k\|\geq\|\nabla\cL_k\|-\|\be_k\|$. Substituting this bound into the preceding inequality yields \eqref{eq:lyapunov_recursion} with $\kappa_{\phi,1}\coloneqq\kappa_{\ell}$, $\kappa_{\phi,2}\coloneqq1+\kappa_{\ell}$, and $\kappa_{\phi,3}\coloneqq\kappa_{\phi,4}$.

\subsection{Proof of Lemma~\ref{lem:tracking_error_summability}}\label{app:proof_tracking_error_summability}

$\bullet$ {\textbf{Proof of \eqref{eq:tracking_error_bound_general_batch}.}}
Using $\be_k=\bbm_k-\nabla f_k$, the momentum recursion~\eqref{eq:mk_gk_def} gives $\be_k=(1-\nu_k)\be_{k-1}+(1-\nu_k)(\nabla f_{k-1}-\nabla f_k)+\nu_k(\bg_k-\nabla f_k)$. Iterating this recursion yields, for all $k\geq1$,
\begin{equation}\label{eq:tracking_error_unroll_proof}
\be_k=\Pi_{1:k}\be_0+\sum_{s=1}^{k}\Pi_{s:k}(\nabla f_{s-1}-\nabla f_s)+\sum_{s=1}^{k}\nu_s\Pi_{s+1:k}(\bg_s-\nabla f_s).
\end{equation}
We bound the three terms separately. The first term is immediate. For the second, Assumption~\ref{ass:regularity} and $\|\bx_s-\bx_{s-1}\|\leq\Delta_{s-1}$ imply
\begin{equation}\label{eq:gradient_drift_bound_proof}
\|\nabla f_{s-1}-\nabla f_s\|\leq L_{\nabla f}\|\bx_s-\bx_{s-1}\|\leq L_{\nabla f}\Delta_{s-1}.
\end{equation}
For the martingale term, recall that $\bg_s-\nabla f_s=N_s^{-1}\sum_{i=1}^{N_s}(\nabla F(\bx_s;\xi_s^i)-\nabla f_s)$. We use the following vector-valued martingale difference inequality.

\begin{lemma}\citep[Lemma~4.3]{Liu2025Nonconvex}\label{lem:vector_mds_inequality}
Let $\{Y_s\}_{s=1}^{k}$ be a sequence of integrable random vectors in $\mR^d$, adapted to $\{\mF_s\}_{s=0}^{k}$, such that $\mE[Y_s\mid\mF_{s-1}]=\0$ for $s=1,\ldots,k$. Then, for any $\pp\in[1,2]$,
\begin{equation*}
\mE\left[\left\|\sum_{s=1}^{k}Y_s\right\|\right]\leq2\sqrt{2}\,\mE\left[\left(\sum_{s=1}^{k}\|Y_s\|^\pp\right)^{1/\pp}\right].
\end{equation*}
\end{lemma}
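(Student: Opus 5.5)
The plan is to reduce the statement to the case $\pp=2$ and then prove a Davis-type square-function inequality for Hilbert-space-valued martingales with an explicit constant.

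\emph{Reduction to $\pp=2$.} For $\pp\in[1,2]$ and nonnegative reals $a_s$, monotonicity of $\ell_\pp$ norms gives $(\sum_s a_s^2)^{1/2}\le(\sum_s a_s^\pp)^{1/\pp}$. It therefore suffices to show
\begin{equation*}
\mE\Big[\|M_k\|\Big]\le 2\sqrt2\,\mE[S_k],\qquad M_t\coloneqq\sum_{s=1}^t Y_s,\quad S_t\coloneqq\Big(\sum_{s=1}^t\|Y_s\|^2\Big)^{1/2}.
\end{equation*}
We may assume $\mE[S_k]<\infty$, since otherwise there is nothing to prove.

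\emph{Main step: a self-normalized estimate.} I would write $\mE\|M_k\|=\mE\big[(\|M_k\|/\sqrt{S_k})\sqrt{S_k}\big]$ and apply Cauchy--Schwarz:
\begin{equation*}
\mE\|M_k\|\le\big(\mE[\|M_k\|^2/S_k]\big)^{1/2}\big(\mE[S_k]\big)^{1/2},
\end{equation*}
with the convention $0/0=0$. The task is then to show $\mE[\|M_k\|^2/S_k]\le 8\,\mE[S_k]$, which gives exactly the constant $2\sqrt2$.

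To do this, I would telescope $\|M_k\|^2/S_k=\sum_{t=1}^k\big(\|M_t\|^2/S_t-\|M_{t-1}\|^2/S_{t-1}\big)$. Since $S_t$ is nondecreasing, each summand is at most $(\|M_t\|^2-\|M_{t-1}\|^2)/S_t$. Expanding $\|M_t\|^2-\|M_{t-1}\|^2=2\langle M_{t-1},Y_t\rangle+\|Y_t\|^2$ splits this into two parts.

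The diagonal part is $\sum_t\|Y_t\|^2/S_t\le 2S_k$, because $\|Y_t\|^2/S_t=(S_t^2-S_{t-1}^2)/S_t\le 2(S_t-S_{t-1})$.

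For the cross term, write $1/S_t=1/S_{t-1}-(1/S_{t-1}-1/S_t)$. The piece $\langle M_{t-1},Y_t\rangle/S_{t-1}$ has zero conditional mean, because $M_{t-1}/S_{t-1}$ is $\mF_{t-1}$-measurable and $\mE[Y_t\mid\mF_{t-1}]=\0$. Steps with $S_{t-1}=0$ carry $M_{t-1}=\0$ and drop out. For the remainder I would use
\begin{equation*}
\big|\langle M_{t-1},Y_t\rangle\big|\Big(\frac1{S_{t-1}}-\frac1{S_t}\Big)\le\frac{\|M_{t-1}\|}{\sqrt{S_{t-1}}}\cdot\frac{\|Y_t\|\,(S_t-S_{t-1})}{S_t\sqrt{S_{t-1}}},
\end{equation*}
and then apply Young's inequality. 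This should let the $\|M\|^2/S$ contribution be absorbed into the left-hand side, leaving a multiple of $\sum_t(S_t-S_{t-1})\le S_k$.

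\emph{Expected obstacle and fallback.} The main obstacle is this correction term. The absorption step needs $\sup_t\|M_t\|^2/S_t$ rather than the terminal value, which brings in a maximal function. Matching exactly $2\sqrt2$ also requires careful bookkeeping of the constants. If the direct route becomes unwieldy, I would first localize with stopping times to ensure integrability. Then I would invoke the Davis inequality for martingales in a Hilbert space (Burkholder), $\mE\|M_k\|\le C\,\mE[S_k]$. That inequality holds with dimension-free constants because $\mR^d$ is a Hilbert space, and I would track its constant to obtain $2\sqrt2$, or else accept a universal constant, which is all that Lemma~\ref{lem:tracking_error_summability} actually needs.

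Finally, applying the result with $Y_s=\nu_s\Pi_{s+1:k}(\bg_s-\nabla f_s)$, expanded at the level of individual samples $\xi_s^i$ so the batch structure is visible, and using Jensen's inequality to move the expectation inside the $1/\pp$ power, yields the $\sigma_0\big(\sum_s\nu_s^\pp\Pi^\pp_{s+1:k}/N_s^{\pp-1}\big)^{1/\pp}$ term in \eqref{eq:tracking_error_bound_general_batch}.
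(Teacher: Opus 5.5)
Your reduction to $\pp=2$ via $\big(\sum_s a_s^2\big)^{1/2}\le\big(\sum_s a_s^\pp\big)^{1/\pp}$ is correct. The paper gives no proof of its own here; it imports the lemma from \citet{Liu2025Nonconvex}. The main step, however, fails. The inequality $\mE[\|M_k\|^2/S_k]\le 8\,\mE[S_k]$ that your Cauchy--Schwarz split needs is false. In fact no bound $\mE[\|M_k\|^2/S_k]\le C\,\mE[S_k]$ with $C$ independent of $k$ can hold. Take $d=1$ and i.i.d.\ $Y_s$ equal to $1$ with probability $1-1/k$ and to $-(k-1)$ with probability $1/k$, so $\mE[Y_s]=0$.
\begin{itemize}
\item On the event of no negative jump, which has probability $(1-1/k)^k\ge 1/4$ for $k\ge 2$, you have $M_k=k$ and $S_k=\sqrt{k}$. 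Hence $\mE[M_k^2/S_k]\ge k^{3/2}/4$.
\item If $J$ is the number of jumps, then $S_k\le\sqrt{k}+(k-1)J$ and $\mE[J]=1$, so $\mE[S_k]\le 2k$.
\end{itemize}
Thus $\mE[M_k^2/S_k]\ge(\sqrt{k}/8)\,\mE[S_k]$, which exceeds $8\,\mE[S_k]$ once $k>4096$.

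In your telescoping, this blow-up sits exactly in the correction term $-2\langle M_{t-1},Y_t\rangle(1/S_{t-1}-1/S_t)$. A jump of size $k-1$ against the accumulated drift $M_{t-1}=t-1$ contributes roughly $2\sqrt{t}\,k$. Summing over the first-jump time gives order $k^{3/2}$. So this is not a bookkeeping or maximal-function issue that Young's inequality or localization could repair. The self-normalized second moment charges the drift event quadratically, while the rare compensating jumps enter $\mE[S_k]$ only linearly. Davis's inequality itself is not contradicted: here $\mE|M_k|$ and $\mE[S_k]$ are both of order $k$.

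Your fallback invokes the Davis inequality for Hilbert-space-valued martingales, $\mE\|M_k\|\le\mE\max_{t\le k}\|M_t\|\le C\,\mE[S_k]$. That gives a correct proof with \emph{some} universal constant. You are also right that the summability argument in Lemma~\ref{lem:tracking_error_summability} only needs that. But it does not prove the lemma as stated, because the value $2\sqrt{2}$ is asserted rather than derived. To get an explicit constant you need a mechanism that never passes through $\|M_k\|^2/S_k$. One option is a Burkholder-type special function $U(x,s)$ with $U(0,0)\le 0$ and $U(x,s)\ge\|x\|-2\sqrt{2}\,s$. It must also satisfy $\mE\,U\big(x+Y,\sqrt{s^2+\|Y\|^2}\big)\le U(x,s)$ for all $x$, all $s\ge 0$, and every integrable mean-zero $Y$. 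Then $U(M_t,S_t)$ is a supermartingale, which yields the bound directly.
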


For fixed $k\geq1$, consider the terms $\nu_s\Pi_{s+1:k}(\nabla F(\bx_s;\xi_s^i)-\nabla f_s)/N_s$ for $1\leq s\leq k$ and $1\leq i\leq N_s$. Order them first by $s$ and then by $i$, revealing the samples one at a time within each batch. Since the coefficients are deterministic, conditional independence and unbiasedness in Assumption~\ref{ass:oracle} make these terms a martingale difference sequence. Lemma~\ref{lem:vector_mds_inequality} therefore gives
\begin{equation}\label{eq:martingale_array_bound_proof}
\mE\left[\left\|\sum_{s=1}^{k}\nu_s\Pi_{s+1:k}(\bg_s-\nabla f_s)\right\|\right]
\leq 2\sqrt{2}\,\mE\left[\left(\sum_{s=1}^{k}\sum_{i=1}^{N_s}\left\|\frac{\nu_s}{N_s}\Pi_{s+1:k}\left(\nabla F(\bx_s;\xi_s^i)-\nabla f_s\right)\right\|^\pp\right)^{1/\pp}\right].
\end{equation}
By the concavity of $t\mapsto t^{1/\pp}$, Jensen's inequality yields
\begin{multline}\label{eq:martingale_array_bound_jensen_proof}
\mE\left[\left(\sum_{s=1}^{k}\sum_{i=1}^{N_s}\left\|\frac{\nu_s}{N_s}\Pi_{s+1:k}\left(\nabla F(\bx_s;\xi_s^i)-\nabla f_s\right)\right\|^\pp\right)^{1/\pp}\right] \\
\leq\left(\sum_{s=1}^{k}\sum_{i=1}^{N_s}\frac{\nu_s^\pp}{N_s^\pp}\Pi_{s+1:k}^\pp\mE\left[\|\nabla F(\bx_s;\xi_s^i)-\nabla f_s\|^\pp\right]\right)^{1/\pp}.
\end{multline}
Assumption~\ref{ass:oracle} and the tower property give $\mE[\|\nabla F(\bx_s;\xi_s^i)-\nabla f_s\|^\pp]\leq\sigma_0^\pp$. Substituting this bound into \eqref{eq:martingale_array_bound_proof} and \eqref{eq:martingale_array_bound_jensen_proof} yields
\begin{equation}\label{eq:martingale_array_final_bound_proof}
\mE\left[\left\|\sum_{s=1}^{k}\nu_s\Pi_{s+1:k}(\bg_s-\nabla f_s)\right\|\right]\leq2\sqrt{2}\,\sigma_0\left(\sum_{s=1}^{k}\frac{\nu_s^\pp\Pi_{s+1:k}^\pp}{N_s^{\pp-1}}\right)^{1/\pp}.
\end{equation}
Taking norms and expectations in \eqref{eq:tracking_error_unroll_proof}, then applying the triangle inequality together with \eqref{eq:gradient_drift_bound_proof} and \eqref{eq:martingale_array_final_bound_proof}, proves \eqref{eq:tracking_error_bound_general_batch}.

$\bullet$ {\textbf{Proof of the summability statement.}}
We use the following lemma.

\begin{lemma}\citep[Lemma~3]{Leluc2023Asymptotic}\label{lem:polynomial_product_rate}
Let $\{\nu_k\}_{k\geq1}$ be a nonnegative sequence converging to zero, and let $\lambda>0$, $m\geq1$, and $q\geq0$. Suppose two nonnegative sequences $\{A_k\}_{k\geq0}$ and $\{\varepsilon_k\}_{k\geq1}$ satisfy $A_k=(1-\lambda\nu_k)^mA_{k-1}+\nu_k^{q+1}\varepsilon_k$ for all sufficiently large $k$.

If $\nu_k=\nu_0/(k+1)^a$ with $\nu_0>0$ and $a\in(0,1)$, then $\limsup_{k\to\infty}A_k/\nu_k^q\leq(m\lambda)^{-1}\limsup_{k\to\infty}\varepsilon_k$. If instead $\nu_k=\nu_0/(k+1)$ and $q<m\lambda\nu_0$, then $\limsup_{k\to\infty}A_k/\nu_k^q\leq\nu_0(m\lambda\nu_0-q)^{-1}\limsup_{k\to\infty}\varepsilon_k$.
\end{lemma}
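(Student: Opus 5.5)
We plan to normalize the recursion by $\nu_k^q$ and reduce it to a scalar contraction with a forcing term of size $\nu_k$. Set $B_k \coloneqq A_k/\nu_k^q$. Dividing the recursion by $\nu_k^q$ gives, for all large $k$,
\begin{equation*}
B_k = \rho_k B_{k-1} + \nu_k\varepsilon_k, \qquad \rho_k \coloneqq (1-\lambda\nu_k)^m\left(\frac{\nu_{k-1}}{\nu_k}\right)^q .
\end{equation*}
Fix any $\bar\varepsilon > \limsup_k \varepsilon_k$; if $\limsup_k \varepsilon_k = \infty$ there is nothing to prove. Then $\varepsilon_k \leq \bar\varepsilon$ for all large $k$. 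Since $\nu_k \to 0$, we also have $\lambda\nu_k \in [0,1)$ eventually, so every term is nonnegative and all manipulations below are legitimate.

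The key step is an expansion of $\rho_k$ of the form $\rho_k \leq 1 - c\,\nu_k$ for a suitable constant $c>0$ and all large $k$.
\begin{itemize}
\item For the first factor: for $m\geq 1$ and $x\in[0,1)$, a second-order Taylor bound gives $(1-x)^m \leq 1 - mx + C_m x^2$ with $C_m = m(m-1)/2$.
\item For the second factor: with $\nu_k = \nu_0/(k+1)^a$ we have $\nu_{k-1}/\nu_k = (1+1/k)^a$. Hence $(\nu_{k-1}/\nu_k)^q = 1 + aq/k + O(k^{-2})$.
\item Multiplying the two factors gives $\rho_k \leq 1 - m\lambda\nu_k + aq/k + O(\nu_k^2 + k^{-2} + \nu_k/k)$.
\end{itemize}
The two cases of the lemma now differ only in how $aq/k$ compares with $\nu_k$.
\begin{itemize}
\item If $a\in(0,1)$, then $1/k = o(\nu_k)$, and all remainder terms are also $o(\nu_k)$. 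So for any $\delta\in(0,m\lambda)$ we get $\rho_k \leq 1-(m\lambda-\delta)\nu_k$ eventually.
\item If $a=1$, then $aq/k = q\nu_k/\nu_0\,(1+O(1/k))$. This gives $\rho_k \leq 1-(m\lambda - q/\nu_0 - \delta)\nu_k$ eventually. The hypothesis $q<m\lambda\nu_0$ is exactly what makes this coefficient positive for small $\delta$.
\end{itemize}
Making these $O(\cdot)$ bookkeeping steps rigorous, in particular the $a=1$ case where the $1/k$ term is of the same order as $\nu_k$, is the step I expect to require the most care.

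With $c$ equal to $m\lambda-\delta$ in the first case and $m\lambda-q/\nu_0-\delta$ in the second, we have $B_k \leq (1-c\nu_k)B_{k-1} + \nu_k\bar\varepsilon$ for $k\geq k_0$. Define $D_k \coloneqq B_k - \bar\varepsilon/c$. Then the forcing term cancels exactly and we obtain $D_k \leq (1-c\nu_k)D_{k-1}$, where $1-c\nu_k\in(0,1]$ eventually. The positive part therefore satisfies
\begin{equation*}
D_k^+ \leq D_{k_0}^+ \prod_{j=k_0+1}^{k}(1-c\nu_j) \leq D_{k_0}^+\exp\Big(-c\sum_{j=k_0+1}^k \nu_j\Big).
\end{equation*}
Since $a\leq 1$, the series $\sum_j\nu_j$ diverges, so $D_k^+\to 0$. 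Hence $\limsup_k B_k \leq \bar\varepsilon/c$.

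Finally, we let $\delta\downarrow 0$ and $\bar\varepsilon\downarrow\limsup_k\varepsilon_k$. This yields $\limsup_k A_k/\nu_k^q \leq (m\lambda)^{-1}\limsup_k\varepsilon_k$ when $a\in(0,1)$. When $a=1$ it yields the bound $\big(m\lambda - q/\nu_0\big)^{-1}\limsup_k\varepsilon_k = \nu_0(m\lambda\nu_0-q)^{-1}\limsup_k\varepsilon_k$, as claimed.
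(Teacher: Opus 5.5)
The paper gives no proof of this lemma; it quotes it from Leluc et al.\ as a black box, so there is no in-paper argument to compare with. Your argument is a correct, self-contained proof of both cases: the normalization $B_k=A_k/\nu_k^q$, the estimate $\rho_k\le 1-c\nu_k$, and the shift $D_k=B_k-\bar\varepsilon/c$ that cancels the forcing term all work.

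There is one slip, in the inequality $(1-x)^m\le 1-mx+\tfrac{m(m-1)}{2}x^2$. It holds for $m=1$ and for $m\ge 2$. For $1<m<2$ it is false for every $x\in(0,1)$: the function $h(x)=1-mx+\tfrac{m(m-1)}{2}x^2-(1-x)^m$ satisfies $h(0)=h'(0)=0$ and $h''(x)=m(m-1)\bigl[1-(1-x)^{m-2}\bigr]<0$ on $(0,1)$. This is exactly the regime $m=\pp\in(1,2)$ in which the paper applies the lemma to the noise term.

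You only need some $O(x^2)$ remainder, so the repair is immediate. For $x\in[0,1)$, use $(1-x)^m\le e^{-mx}\le 1-mx+\tfrac{m^2}{2}x^2$, valid for all $m>0$. Alternatively, use the Lagrange remainder restricted to $x\le 1/2$, which gives the constant $\tfrac{m(m-1)}{2}\max\{1,2^{2-m}\}$.

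With that change, the rest of your argument goes through unchanged:
\begin{itemize}
\item the expansion $(1+1/k)^{aq}=1+aq/k+O(k^{-2})$;
\item the dichotomy $1/k=o(\nu_k)$ for $a<1$ versus $q/k=(q/\nu_0)\nu_k(1+1/k)$ for $a=1$, where $q<m\lambda\nu_0$ keeps $c>0$;
\item the divergence of $\sum_j\nu_j$ for $a\le 1$;
\item the final limits $\delta\downarrow 0$ and $\bar\varepsilon\downarrow\limsup_k\varepsilon_k$.
\end{itemize}
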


$\bullet\bullet$ We first show that $\mE[\|\be_0\|]<\infty$. The momentum recursion gives $\be_0=(1-\nu_0)(\bbm_{-1}-\nabla f_0)+\nu_0(\bg_0-\nabla f_0)$. Applying Lemma~\ref{lem:vector_mds_inequality} to the mini-batch noise at $k=0$ and using Assumption~\ref{ass:oracle}, we obtain $\mE[\|\bg_0-\nabla f_0\|]\leq2\sqrt{2}\,\sigma_0/N_0^{(\pp-1)/\pp}$. Since $\bbm_{-1}$ is fixed and $\|\nabla f_0\|\leq\kappa_{\nabla f}$ by Assumption~\ref{ass:regularity},
\begin{equation*}
\mE[\|\be_0\|]\leq(1-\nu_0)(\|\bbm_{-1}\|+\kappa_{\nabla f})+\frac{2\sqrt{2}\,\nu_0\sigma_0}{N_0^{(\pp-1)/\pp}}<\infty.
\end{equation*}
Under \eqref{eq:tracking_schedule_general_batch}, we have $0<a_2<2a_1-1\leq1$. Thus, $\Pi_{1:k}\leq\exp(-\sum_{j=1}^{k}\nu_j)\leq\exp(-C_1(k+1)^{1-a_2})$ for some $C_1>0$ and all sufficiently large $k$. Together with $\Delta_k=\Delta_0/(k+1)^{a_1}$, this gives $\sum_{k=1}^{\infty}\Delta_k\Pi_{1:k}<\infty$, so the initial-error contribution is summable.

$\bullet\bullet$ For the drift term, define $A_k^{\mathrm{drift}}\coloneqq\sum_{s=1}^{k}\Pi_{s:k}\Delta_{s-1}$ with $A_0^{\mathrm{drift}}=0$. Then $A_k^{\mathrm{drift}}=(1-\nu_k)A_{k-1}^{\mathrm{drift}}+(1-\nu_k)\Delta_{k-1}$. Set $q_{\mathrm{drift}}\coloneqq(a_1-a_2)/a_2$ and $\varepsilon_k^{\mathrm{drift}}\coloneqq(1-\nu_k)\Delta_{k-1}/\nu_k^{q_{\mathrm{drift}}+1}$. The conditions $0<a_2<2a_1-1$ and $a_1\leq1$ imply $a_1>a_2$ and hence $q_{\mathrm{drift}}\geq0$. Moreover, the schedules in \eqref{equ:seq} give $\Delta_{k-1}\asymp\nu_k^{q_{\mathrm{drift}}+1}\asymp(k+1)^{-a_1}$, so $\limsup_{k\to\infty}\varepsilon_k^{\mathrm{drift}}<\infty$.

Applying Lemma~\ref{lem:polynomial_product_rate} with $\lambda=m=1$, $q=q_{\mathrm{drift}}$, and $\varepsilon_k=\varepsilon_k^{\mathrm{drift}}$ yields $A_k^{\mathrm{drift}}\lesssim\nu_k^{q_{\mathrm{drift}}}\asymp\Delta_k/\nu_k$. Consequently,
\begin{equation*}
\sum_{k=1}^{\infty}\Delta_k A_k^{\mathrm{drift}}\lesssim\sum_{k=1}^{\infty}\frac{\Delta_k^2}{\nu_k}\lesssim\sum_{k=1}^{\infty}\frac{1}{(k+1)^{2a_1-a_2}}<\infty,
\end{equation*}
where the last inequality follows from $2a_1-a_2>1$.

$\bullet\bullet$ For the martingale term, define $A_k^{\mathrm{noise}}\coloneqq\sum_{s=1}^{k}\nu_s^\pp\Pi_{s+1:k}^\pp/N_s^{\pp-1}$ with $A_0^{\mathrm{noise}}=0$. Then $A_k^{\mathrm{noise}}=(1-\nu_k)^\pp A_{k-1}^{\mathrm{noise}}+\nu_k^\pp/N_k^{\pp-1}$. Set $q_{\mathrm{noise}}\coloneqq(a_2+a_3)(\pp-1)/a_2$ and $\varepsilon_k^{\mathrm{noise}}\coloneqq\nu_k^\pp/(N_k^{\pp-1}\nu_k^{q_{\mathrm{noise}}+1})$. The schedules in \eqref{equ:seq} give $\nu_k^\pp/N_k^{\pp-1}\asymp\nu_k^{q_{\mathrm{noise}}+1}\asymp(k+1)^{-a_2\pp-a_3(\pp-1)}$, and hence $\limsup_{k\to\infty}\varepsilon_k^{\mathrm{noise}}<\infty$.

Applying Lemma~\ref{lem:polynomial_product_rate} with $\lambda=1$, $m=\pp$, $q=q_{\mathrm{noise}}$, and $\varepsilon_k=\varepsilon_k^{\mathrm{noise}}$ gives $A_k^{\mathrm{noise}}\lesssim\nu_k^{q_{\mathrm{noise}}}\asymp(\nu_k/N_k)^{\pp-1}$. Therefore,
\begin{equation*}
\sum_{k=1}^{\infty}\Delta_k(A_k^{\mathrm{noise}})^{1/\pp}\lesssim\sum_{k=1}^{\infty}\Delta_k\left(\frac{\nu_k}{N_k}\right)^{(\pp-1)/\pp}\lesssim\sum_{k=1}^{\infty}\frac{1}{(k+1)^{a_1+(a_2+a_3)(\pp-1)/\pp}}<\infty,
\end{equation*}
where the last inequality follows from $a_1+(a_2+a_3)(\pp-1)/\pp>1$. Combining these three estimates with $\Delta_0\mE[\|\be_0\|]<\infty$ proves the summability statement.

\subsection{Proof of Theorem \ref{thm:main}}

Lemma~\ref{lem:tracking_error_summability} and Tonelli's theorem give $\mE[\sum_{k=0}^{\infty}\Delta_k\|\be_k\|]=\sum_{k=0}^{\infty}\Delta_k\mE[\|\be_k\|]<\infty$. Hence,
\begin{equation}\label{eq:tracking_error_pathwise_summable}
\sum_{k=0}^{\infty}\Delta_k\|\be_k\|<\infty \quad \text{almost surely}.
\end{equation}
Also, $a_1>0.5$ implies
\begin{equation}\label{eq:radius_square_summable}
\sum_{k=0}^{\infty}\Delta_k^2<\infty.
\end{equation}
Fix a sample path on which these bounds hold. Summing \eqref{eq:lyapunov_recursion} from $k=0$ to $M$ and using $\phi_\mu(\bx_{M+1})=f(\bx_{M+1})+\mu\|\bc_{M+1}\|\geq f_{\inf}$, we obtain
\begin{equation*}
\kappa_{\phi,1}\sum_{k=0}^{M}\Delta_k\|\nabla\cL_k\|\leq\phi_\mu(\bx_0)-f_{\inf}+\kappa_{\phi,2}\sum_{k=0}^{M}\Delta_k\|\be_k\|+\kappa_{\phi,3}\sum_{k=0}^{M}\Delta_k^2.
\end{equation*}
Letting $M\to\infty$ and applying \eqref{eq:tracking_error_pathwise_summable} and \eqref{eq:radius_square_summable} gives
\begin{equation}\label{eq:true_kkt_weighted_summable}
\sum_{k=0}^{\infty}\Delta_k\|\nabla\cL_k\|<\infty \quad \text{almost surely}.
\end{equation}
Since $a_1\leq1$, we have $\sum_{k=0}^{\infty}\Delta_k=\infty$, so $\liminf_{k\to\infty}\|\nabla\cL_k\|=0$ almost surely. Together with $\|\nabla\cL_k\|\leq\|\nabla f_k+G_k^\top\blambda_k\|+\|\bc_k\|\leq\sqrt{2}\|\nabla\cL_k\|$, this yields
\begin{equation}\label{eq:true_kkt_block_liminf}
\liminf_{k\to\infty}\left(\|\nabla f_k+G_k^\top\blambda_k\|+\|\bc_k\|\right)=0 \quad \text{almost surely}.
\end{equation}

We next strengthen this liminf result to convergence. Consider the optimality residual $\|\br_k\|=\|\nabla f_k+G_k^\top\blambda_k\|$. Suppose that $\limsup_{k\to\infty}\|\br_k\|>0$ on a sample path satisfying \eqref{eq:true_kkt_weighted_summable} and \eqref{eq:true_kkt_block_liminf}. Since $\liminf_{k\to\infty}\|\br_k\|=0$, there exist $\epsilon>0$ and two infinite sequences of integers $\{s_i\}_{i\geq0}$ and $\{t_i\}_{i\geq0}$ with $s_i<t_i<s_{i+1}$ such that
\begin{equation}\label{eq:optimality_crossing_indices}
\|\br_{s_i}\|\geq2\epsilon,\qquad \|\br_{t_i}\|<\epsilon,\qquad \|\br_k\|\geq\epsilon \quad \text{for all } k\in\{s_i,\ldots,t_i-1\}.
\end{equation}

To control the change in $\br_k$, let $W_k\coloneqq G_kG_k^\top$. Assumption~\ref{ass:regularity} gives $\|G_k\|\leq\sqrt{\kappa_{2,G}}$ and $\|W_k^{-1}\|\leq\kappa_{1,G}^{-1}$. The identity $W_{k+1}^{-1}-W_k^{-1}=W_{k+1}^{-1}(W_k-W_{k+1})W_k^{-1}$ then yields $\|W_{k+1}^{-1}-W_k^{-1}\|\leq2\kappa_{1,G}^{-2}\sqrt{\kappa_{2,G}}\|G_{k+1}-G_k\|$. Expanding $P_{k+1}-P_k$ using $P_k=I-G_k^\top W_k^{-1}G_k$ and applying these bounds, we obtain
\begin{equation*}
\|P_{k+1}-P_k\|\leq\left(\frac{2\sqrt{\kappa_{2,G}}}{\kappa_{1,G}}+\frac{2\kappa_{2,G}^{3/2}}{\kappa_{1,G}^2}\right)\|G_{k+1}-G_k\| \leq \kappa_{\mathrm P}\|\bx_{k+1}-\bx_k\|,
\end{equation*}
where $\kappa_{\mathrm P}>0$ is a constant and the last inequality follows from the Lipschitz continuity of $G$.

Since $\br_k=P_k\nabla f_k$, $\|P_{k+1}\|\leq1$, and $\|\nabla f_k\|\leq\kappa_{\nabla f}$, the triangle inequality gives $\|\br_{k+1}-\br_k\|\leq C_{\mathrm r}\|\bx_{k+1}-\bx_k\|\leq C_{\mathrm r}\Delta_k$, where $C_{\mathrm r}\coloneqq L_{\nabla f}+\kappa_{\mathrm P}\kappa_{\nabla f}>0$. Thus, for each $i\geq0$, we have $\epsilon\leq\|\br_{s_i}\|-\|\br_{t_i}\|\leq\sum_{k=s_i}^{t_i-1}\|\br_{k+1}-\br_k\|\leq C_{\mathrm r}\sum_{k=s_i}^{t_i-1}\Delta_k$. Multiplying by $\epsilon$ and using \eqref{eq:optimality_crossing_indices} yields $\epsilon^2\leq C_{\mathrm r}\sum_{k=s_i}^{t_i-1}\Delta_k\|\br_k\|$. Since the intervals are disjoint,
\begin{equation*}
\sum_{i=0}^{\infty}\epsilon^2\leq C_{\mathrm r}\sum_{i=0}^{\infty}\sum_{k=s_i}^{t_i-1}\Delta_k\|\br_k\| \leq C_{\mathrm r}\sum_{k=0}^{\infty}\Delta_k\|\br_k\|\leq C_{\mathrm r}\sum_{k=0}^{\infty}\Delta_k\|\nabla\cL_k\|<\infty,
\end{equation*}
contradicting the divergence of the left-hand side. Therefore,
\begin{equation}\label{eq:true_optimality_limit}
\lim_{k\to\infty}\|\br_k\|=\lim_{k\to\infty}\|\nabla f_k+G_k^\top\blambda_k\|=0 \quad \text{almost surely}.
\end{equation}

For the feasibility residual, \eqref{eq:true_kkt_weighted_summable} and \eqref{eq:true_kkt_block_liminf} give $\sum_{k=0}^{\infty}\Delta_k\|\bc_k\|<\infty$ and $\liminf_{k\to\infty}\|\bc_k\|=0$ almost surely. Assumption~\ref{ass:regularity} also ensures $\|\bc_{k+1}-\bc_k\|\lesssim\|\bx_{k+1}-\bx_k\|\leq\Delta_k$. The same crossing argument therefore gives
\begin{equation}\label{eq:true_feasibility_limit}
\lim_{k\to\infty}\|\bc_k\|=0 \quad \text{almost surely}.
\end{equation}
Combining \eqref{eq:true_optimality_limit} and \eqref{eq:true_feasibility_limit}, we conclude that
\begin{equation}\label{eq:true_kkt_block_limit}
\lim_{k\to\infty}\left(\|\nabla f_k+G_k^\top\blambda_k\|+\|\bc_k\|\right)=0 \quad \text{almost surely}.
\end{equation}
This completes the proof.

\section{Additional Experimental Results}\label{app:exper}

This appendix provides detailed experimental setups and additional results for the experiments in Section~\ref{sec:experiment}. We follow the same organization as in the main paper, presenting the CUTEst experiments first, followed by the constrained logistic regression experiments.

\subsection{CUTEst benchmark problems}\label{app:cutest}

\subsubsection{Detailed experimental setup}\label{app:cutest_setup}

We use 25 equality-constrained problems from CUTEst test set \citep{Gould2014CUTEst}. The problems are selected to have non-constant objectives, equality constraints only, dimension $d<1000$, and at least one constraint. The selected problems are as follows: 
\texttt{ALSOTAME}, \texttt{BT1}, \texttt{EXTRASIM}, \texttt{HS6}, \texttt{HS7}, \texttt{HS9}, \texttt{MARATOS}, \texttt{TAME}, \texttt{TRY-B}, \texttt{BT10}, \texttt{SUPERSIM}, \texttt{BT2}, \texttt{HS26}, \texttt{HS27}, \texttt{HS28}, \texttt{HS60}, \texttt{HS62}, \texttt{BT4}, \texttt{BT5}, \texttt{BYRDSPHR}, \texttt{HS63}, \texttt{WACHBIEG}, \texttt{HONG}, \texttt{HS41}, and \texttt{BT9}.

For each iterate $\bx_k$, the stochastic gradient oracle returns mini-batch averages of conditionally unbiased samples. Each sample takes the form $\nabla F(\bx_k;\xi)=\nabla f(\bx_k)+\bxi_k$, where $\mE[\bxi_k\mid \mF_{k-1}]=0$. The perturbation $\bxi_k$ is generated from either a symmetric Pareto-type distribution or a centered Student-$t$ distribution. In the Pareto case, we draw a positive Pareto magnitude and multiply it by an independent Rademacher random direction, so that $\mE[\bxi_k \mid \mF_{k-1}]=0$. In these experiments, $\pp$ denotes the Pareto shape parameter or the Student-$t$ degrees of freedom. We consider $\pp\in\{1.2,1.4,1.6,1.8\}$, where smaller $\pp$ corresponds to heavier-tailed noise. We should mention that the shape parameter slightly differs from the moment order in the sense that, when the shape parameter is $\pp$, all moments of order strictly smaller than $\pp$ (excluding the $\pp$-th moment itself) are finite. Nevertheless, for simplicity, we use the shape parameter as a convenient proxy for controlling the moment order. In our experiment, the perturbation magnitude is scaled by $\left(\tau_0^\pp+\tau_1^\pp\|\nabla f(\bx_k)\|^\pp\right)^{1/\pp}$, where $\tau_0=10^{-2}$ and $\tau_1=10^{-1}$. This choice allows the noise level to vary with the gradient norm. Under the gradient bound in Assumption~\ref{ass:regularity}, the resulting perturbations satisfy the bounded-moment condition in Assumption~\ref{ass:oracle} for any moment order $q\in(1,\pp)$.

We compare four choices of $B_k$: the identity matrix, an SR1 update, an estimated Lagrangian Hessian, and an averaged estimated Lagrangian Hessian. The averaged Hessian is computed as a running average of the most recent Hessian estimates with a window length of 100. The three curvature-based choices are spectrally capped at $10^6$ to enforce the boundedness condition in Assumption~\ref{ass:regularity}. We set $\Delta_0=1$, $\nu_0=1$, $\theta=0.5$, and the KKT tolerance $10^{-4}$. The maximum number of iterations~is~$10^4$.

We consider two batch-size settings. In the online setting, we use $a_1=1$, $a_2=0.5$, $a_3=0$, and $N_0=1$, so $N_k=1$ for all $k$. In the increasing-batch setting, we use $a_1=0.8$, $a_2=0.5$, $a_3=0.75$, and $N_0=1$, with the batch size capped at $500$. These choices reflect the trade-off discussed in Remark~\ref{rem:3.6}: larger mini-batches reduce heavy-tailed stochastic fluctuations and permit more aggressive trust-region updates.

For each combination of problem, noise distribution, moment order, $B_k$ construction, and batch-size setting, we perform five independent runs, yielding a total of $25\times 2\times 4\times 4\times 2\times 5=8000$ runs. The KKT residual is computed using the true CUTEst gradient and constraints as
\begin{equation*}
\left(\|\nabla f(\bx_k)+G_k^\top\blambda_k\|^2+\|\bc_k\|^2\right)^{1/2}, \qquad
\blambda_k=-(G_kG_k^\top)^{-1}G_k\nabla f(\bx_k).
\end{equation*}
For the boxplots, each problem contributes a single value, defined as the final KKT residual averaged over the five runs. For the performance profiles, the cost is measured by the number of iterations required to reach the prescribed KKT tolerance. In each performance-profile subplot, the noise distribution, batch-size setting, and moment order $\pp$ are fixed, and the four curves correspond to the four constructions of $B_k$. For each problem, the stopping iteration of each method is normalized by the best stopping iteration among the four methods on that problem. Thus, the value of a curve at performance ratio $\alpha$ is the fraction of problems solved within a factor $\alpha$ of the best method. In particular, the value at $\alpha = 1$ gives the fraction of problems on which the method is best, while larger values of $\alpha$ measure how quickly the method catches up to the best solver. Hence, curves that rise faster and stay closer to the upper-left corner indicate better performance.

\subsubsection{Additional results and discussion}\label{app:cutest_results}

Figures~\ref{fig:app_cutest_boxplots} and~\ref{fig:app_cutest_profiles} provide the full CUTEst results. The boxplots in Figure~\ref{fig:app_cutest_boxplots} show that the increasing-batch setting consistently improves robustness under both Pareto and Student-$t$ noise. This effect is most pronounced for heavier tails, especially when $\pp=1.2$ and $\pp=1.4$: compared with the online setting, the residual distributions shift downward and become more concentrated near the target accuracy. This observation is consistent with the role of $N_k$ in Remark~\ref{rem:3.6}: using more samples reduces heavy-tailed stochastic fluctuations in the gradient estimates and allows the method to use less conservative trust-region updates.

The comparison across the four choices of $B_k$ further highlights the benefits of incorporating curvature information. In the online setting, where only a single stochastic gradient sample is used per iteration, the four constructions often perform similarly, and the identity matrix remains a strong and stable baseline. This is consistent with the heavy-tailed setting, in which curvature information can be more difficult to exploit when the stochastic gradient estimates are highly noisy. In contrast, under the increasing-batch setting, the SR1, estimated Hessian, and averaged Hessian variants become more effective, particularly in the heavier-tailed cases. The estimated and averaged Hessian choices frequently yield lower residuals and tighter boxes, indicating that once the stochastic estimates are stabilized by larger mini-batches, curvature information becomes more useful for forming higher-quality trial steps.

The performance profiles in Figure~\ref{fig:app_cutest_profiles} convey the same message from the perspective of iteration complexity. Since the performance profiles use the average stopping iteration over five independent runs, with failed runs treated as having infinite stopping times, the curves capture both speed and reliability. Across both noise distributions, the increasing-batch setting generally solves a larger fraction of problems within small performance ratios, particularly for larger moment orders where the noise is less extreme. The curvature-aware variants, especially the estimated and averaged Hessian choices, often achieve stronger profiles under the increasing-batch setting, while the online setting remains competitive in several cases. Overall, these appendix results reinforce the main-text observation: TR-SSQP can operate with a single stochastic gradient sample per iteration, and increasing batch sizes further stabilize heavy-tailed gradients, sharpen curvature information, and improve the efficiency of the resulting trust-region steps.

\begin{figure}[t]
\centering
\begin{minipage}[t]{0.245\textwidth}
\centering
\includegraphics[width=\linewidth]{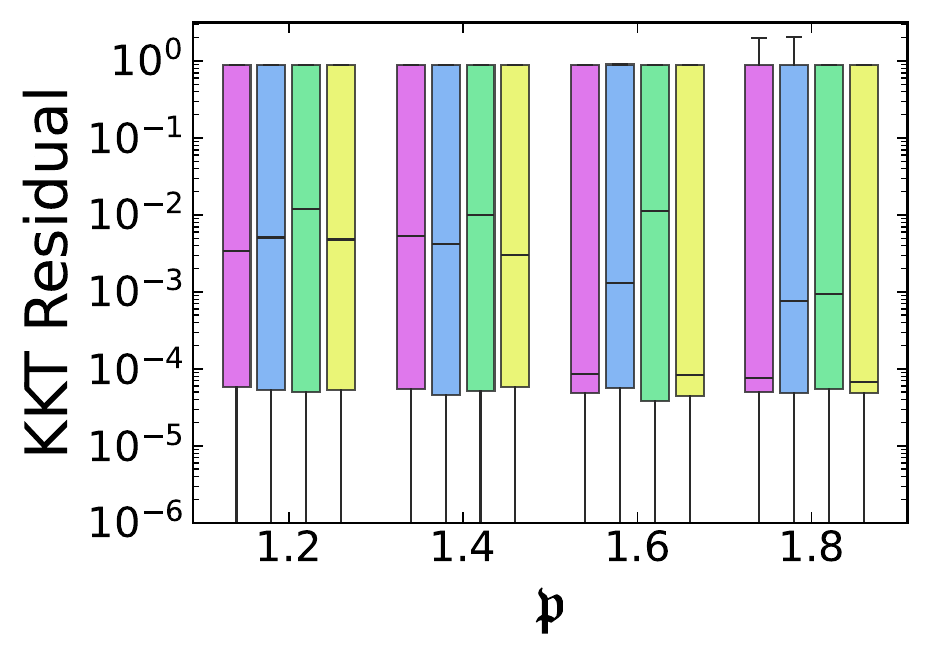}\\
{\footnotesize (a) Pareto, single batch}
\end{minipage}
\hfill
\begin{minipage}[t]{0.245\textwidth}
\centering
\includegraphics[width=\linewidth]{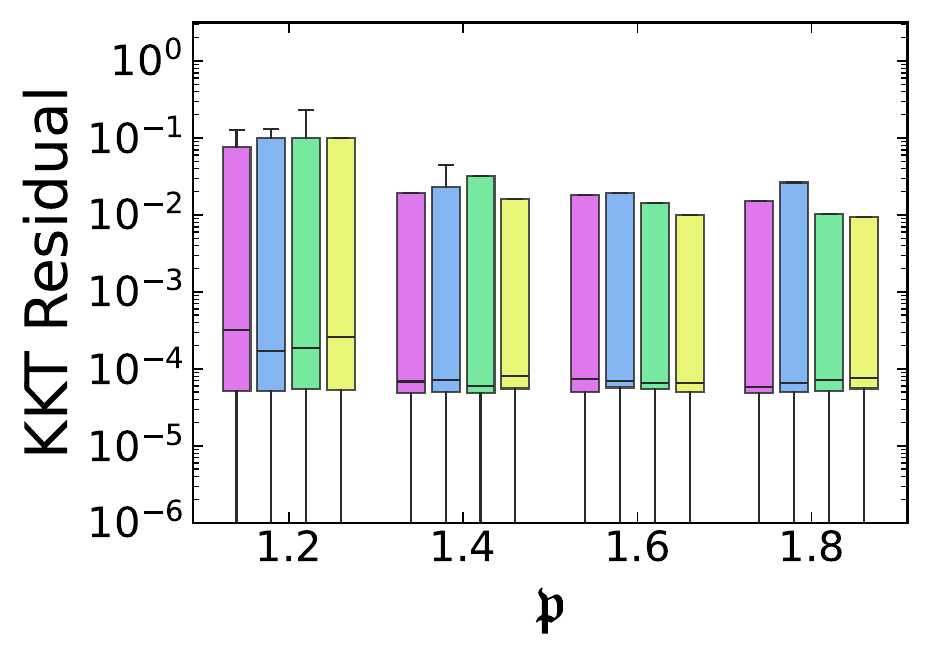}\\
{\footnotesize (b) Pareto, increasing batch}
\end{minipage}
\hfill
\begin{minipage}[t]{0.245\textwidth}
\centering
\includegraphics[width=\linewidth]{figures/cutest/boxplot_t_online.pdf}\\
{\footnotesize (c) Student-$t$, single batch}
\end{minipage}
\hfill
\begin{minipage}[t]{0.245\textwidth}
\centering
\includegraphics[width=\linewidth]{figures/cutest/boxplot_t_increasing_batch.pdf}\\
{\footnotesize (d)~\mbox{Student-$t$},~increasing~batch}
\end{minipage}

\includegraphics[width=0.4\textwidth]{figures/cutest/method_legend.pdf}

\caption{\textit{KKT residual boxplots for CUTEst problems under Pareto and Student-$t$ heavy-tailed noise. For each moment order $\pp \in \{1.2,1.4,1.6,1.8\}$, the four boxes correspond to TR-SSQP with four choices of $B_k$: identity, SR1, estimated Hessian, and averaged Hessian.}} \label{fig:app_cutest_boxplots}
\end{figure}

\begin{figure}[t]
\centering
{

\begin{tabular}{@{}c c c c c@{}}
& {\scriptsize $\qquad\qquad \pp=1.2$} & {\scriptsize $\qquad\qquad \pp=1.4$} & {\scriptsize $\qquad\qquad \pp=1.6$} & {\scriptsize $\qquad\qquad \pp=1.8$} \\
{\rotatebox{90}{\scriptsize \quad\; Pareto, single batch}} &
\includegraphics[width=0.215\textwidth]{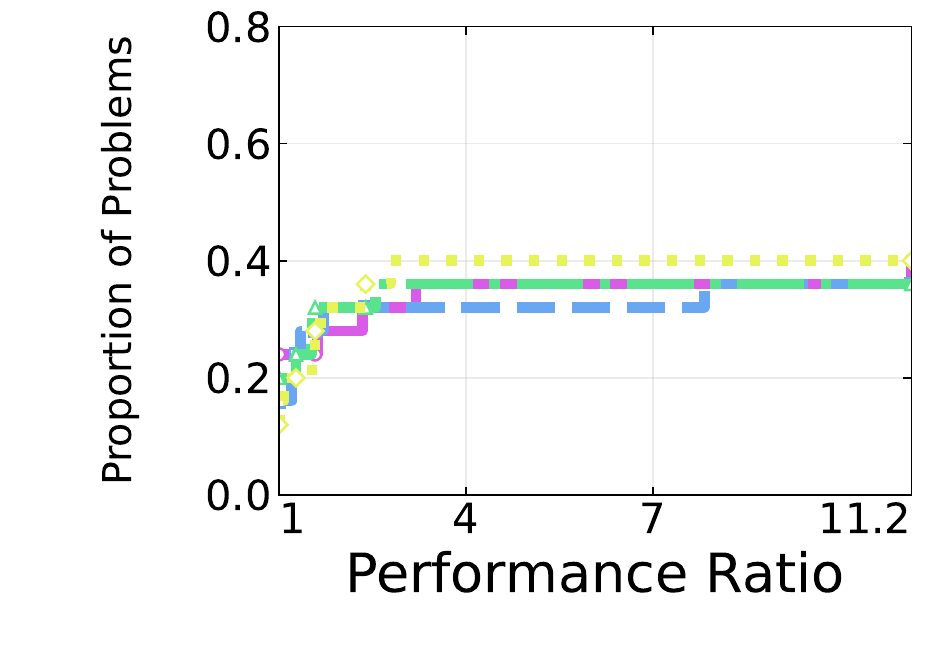} &
\includegraphics[width=0.215\textwidth]{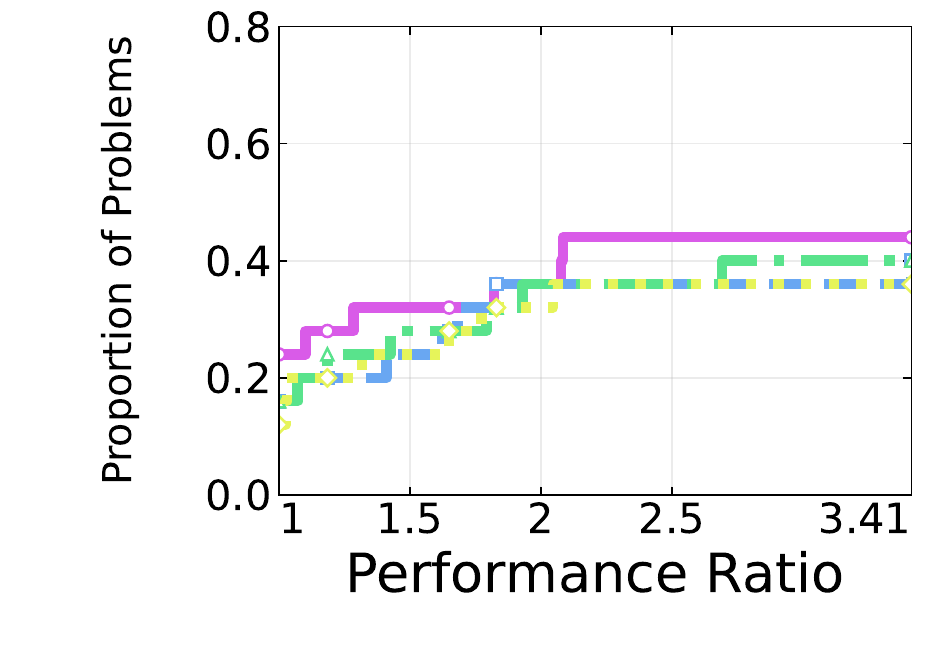} &
\includegraphics[width=0.215\textwidth]{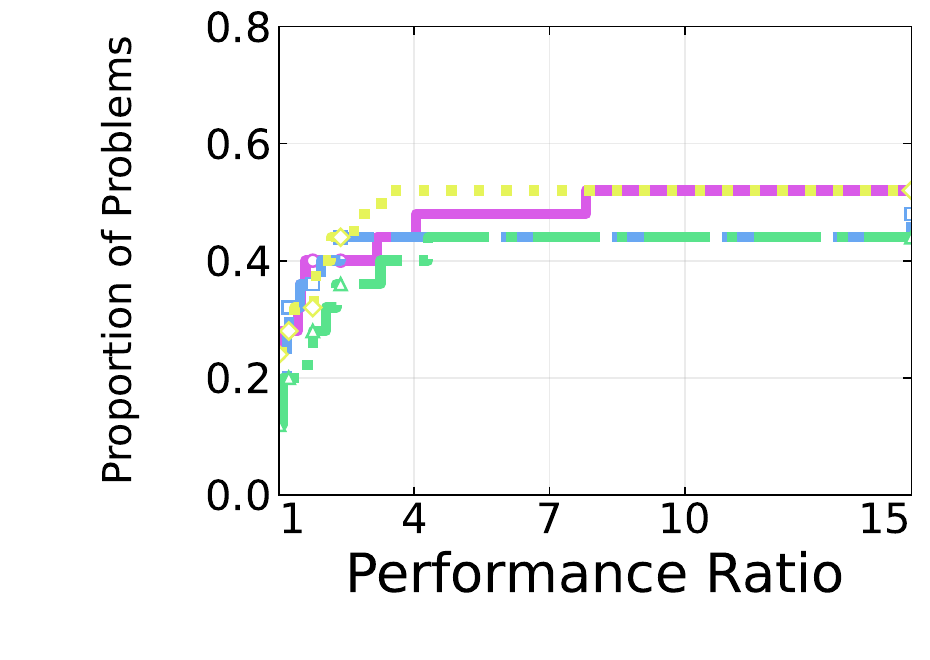} &
\includegraphics[width=0.215\textwidth]{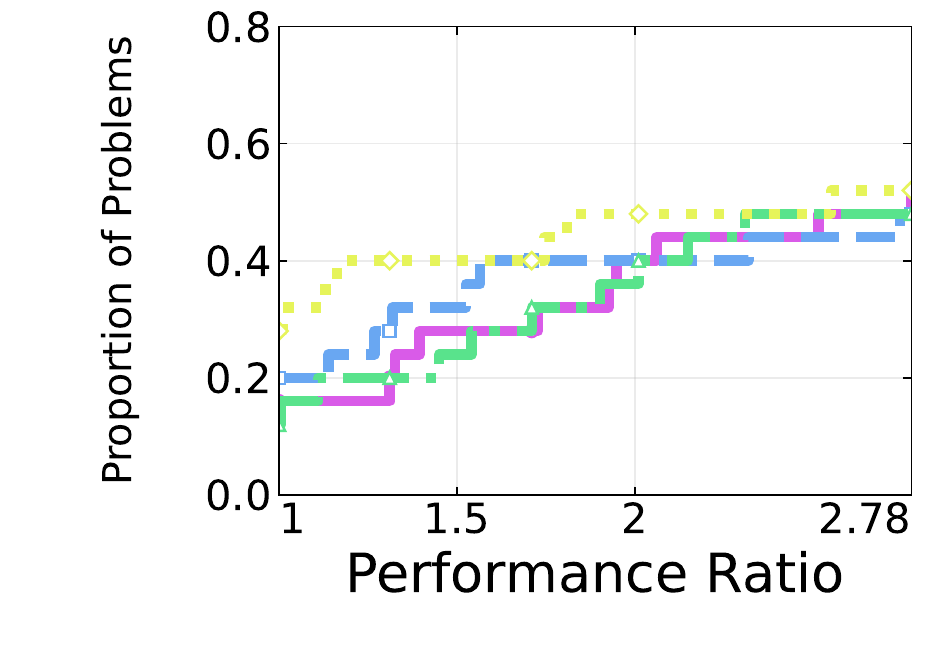} \\
{\rotatebox{90}{\scriptsize \quad Pareto, increasing batch}} &
\includegraphics[width=0.215\textwidth]{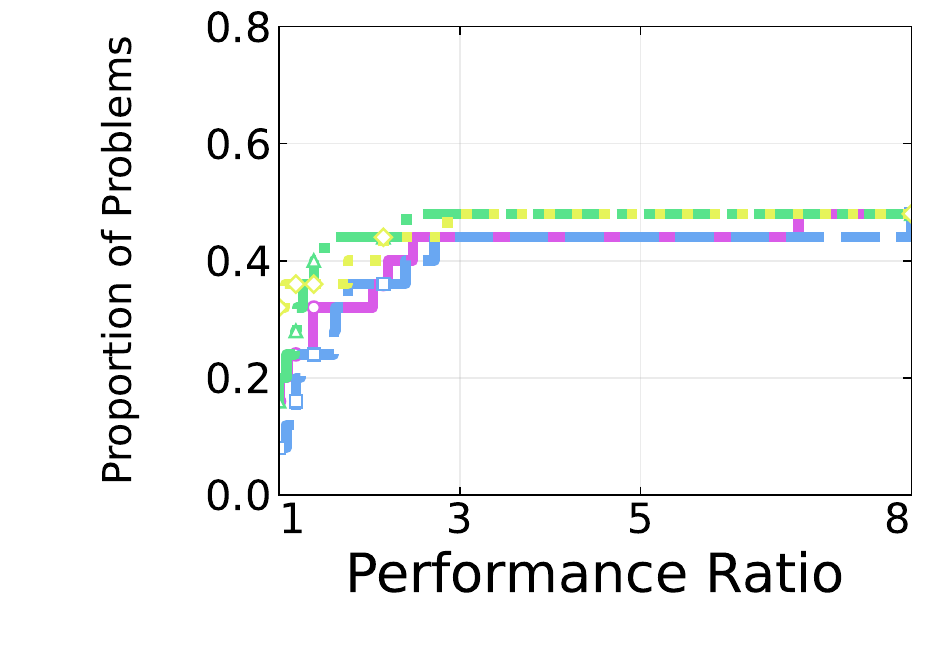} &
\includegraphics[width=0.215\textwidth]{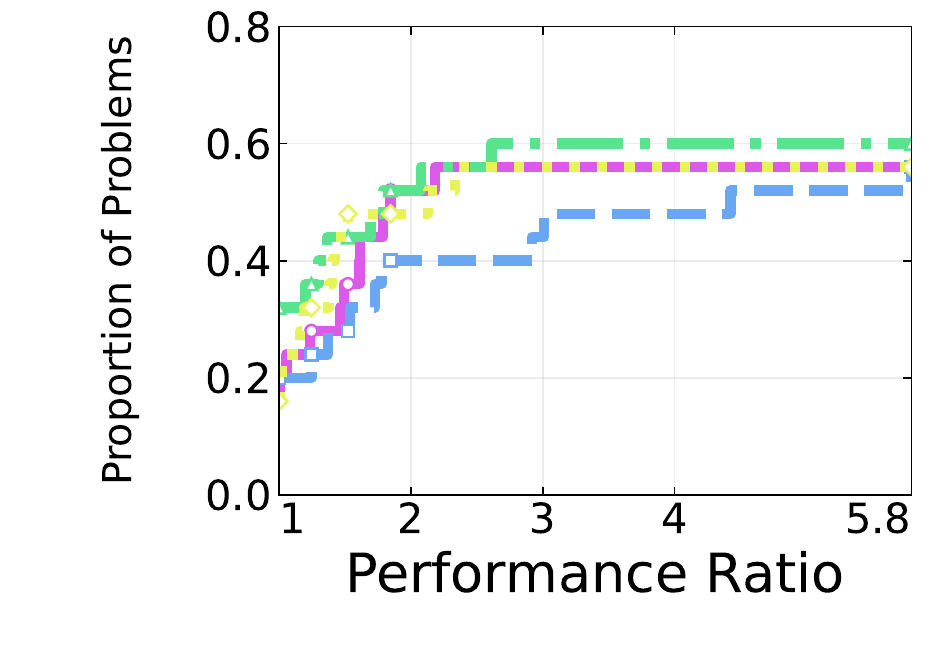} &
\includegraphics[width=0.215\textwidth]{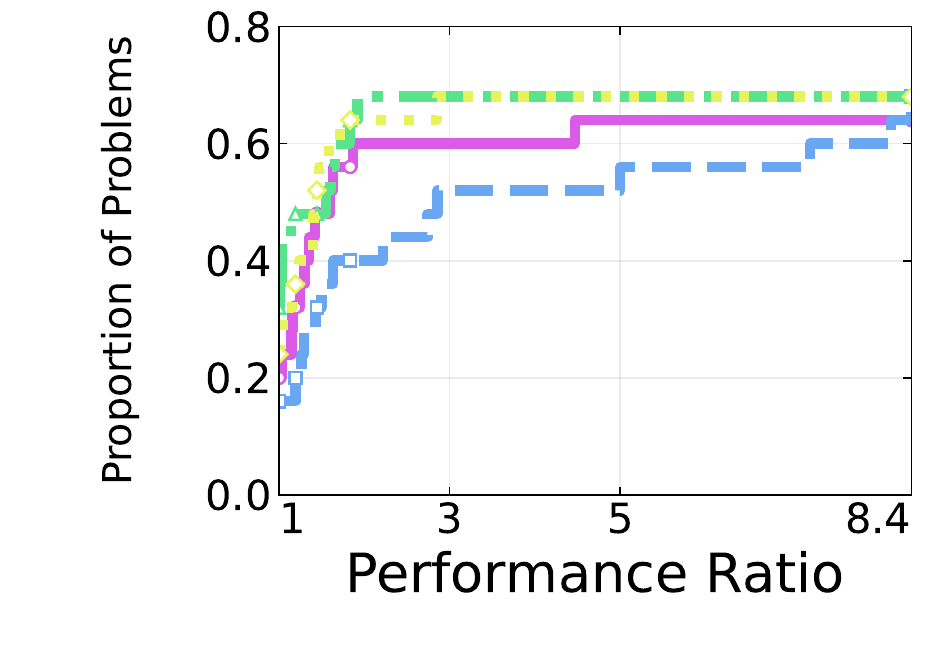} &
\includegraphics[width=0.215\textwidth]{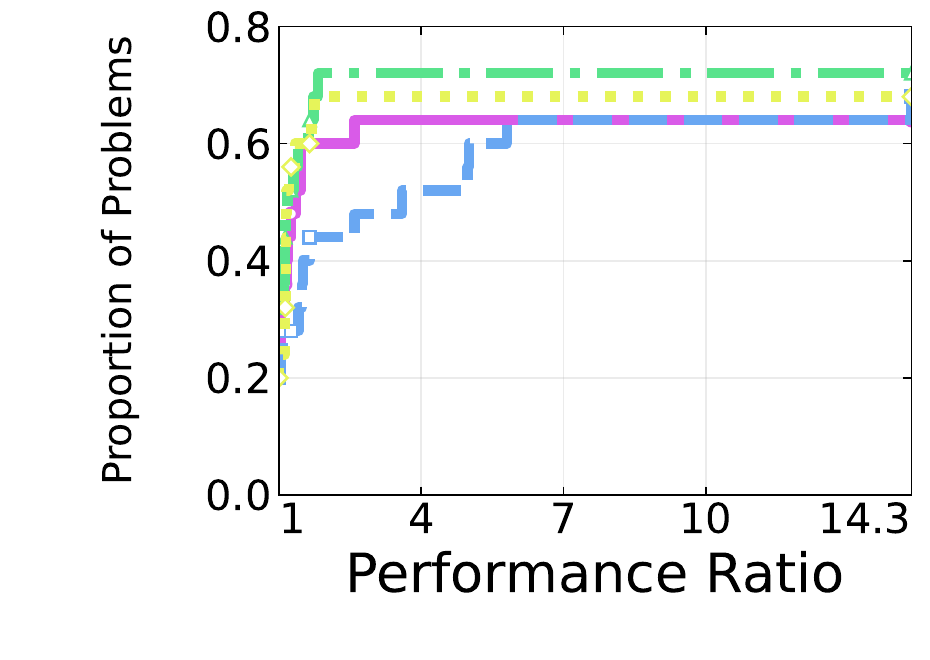} \\
{\rotatebox{90}{\scriptsize\;\; Student-$t$, single batch}} &
\includegraphics[width=0.215\textwidth]{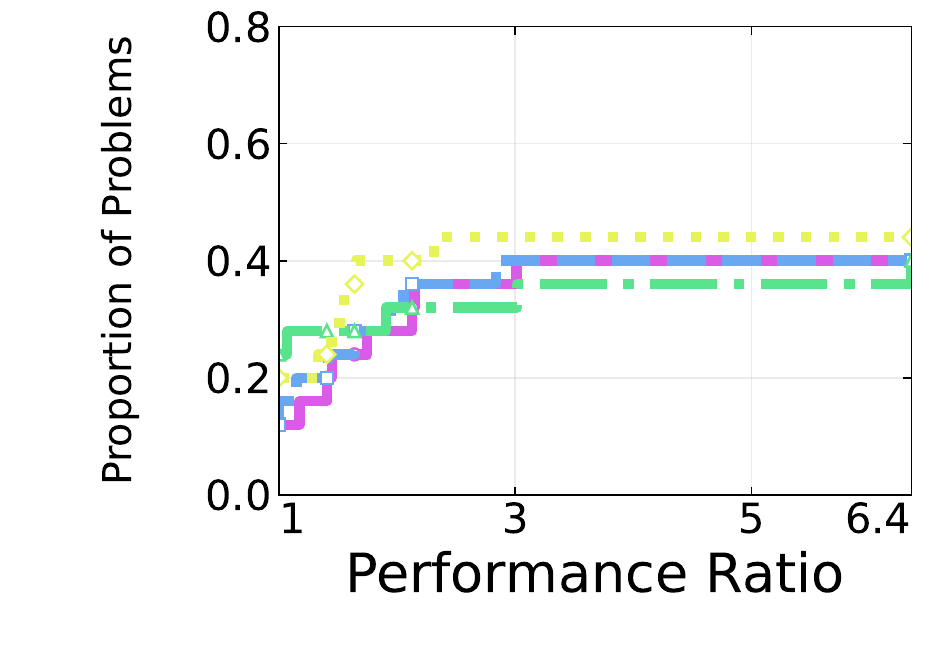} &
\includegraphics[width=0.215\textwidth]{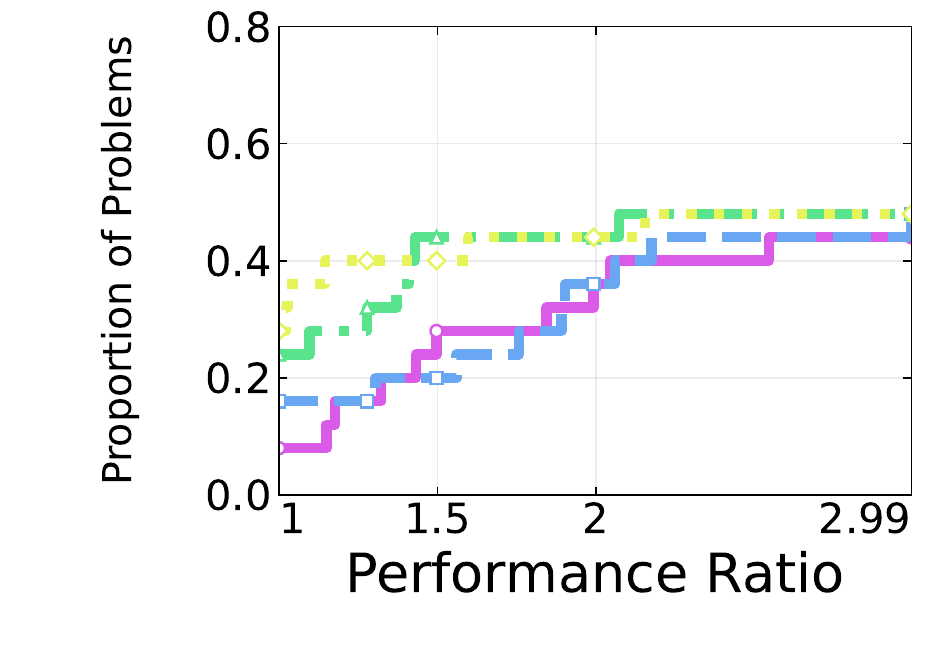} &
\includegraphics[width=0.215\textwidth]{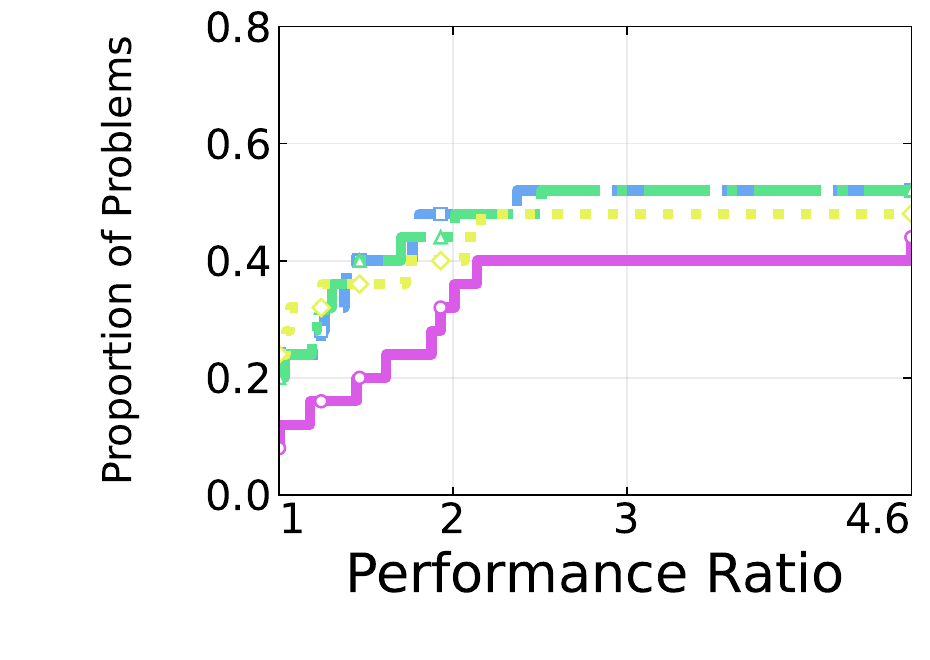} &
\includegraphics[width=0.215\textwidth]{figures/cutest/performance_profile_t_online_p_1p8.pdf} \\
{\rotatebox{90}{\scriptsize  Student-$t$, increasing batch}} &
\includegraphics[width=0.215\textwidth]{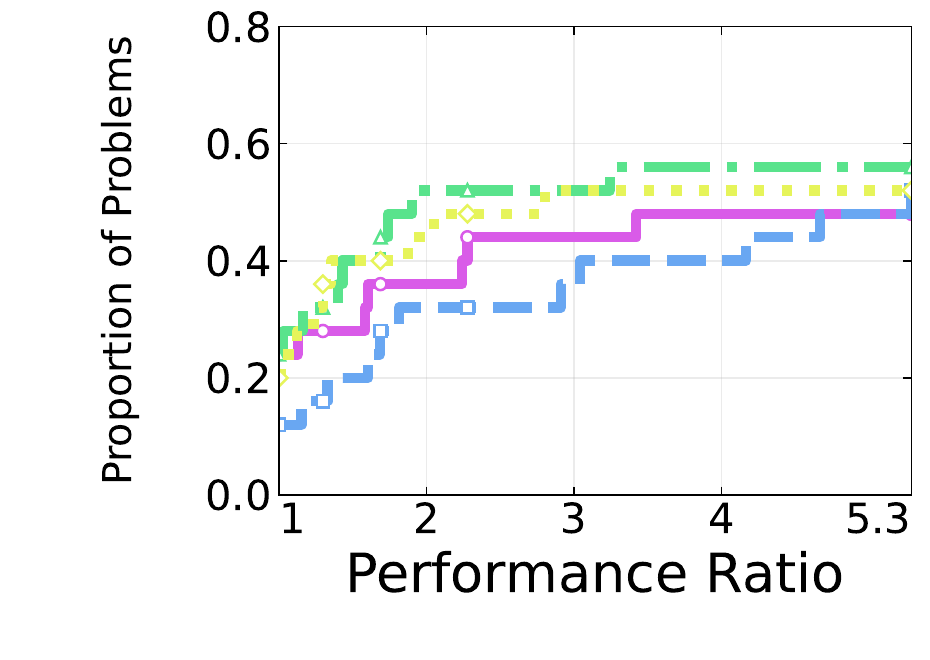} &
\includegraphics[width=0.215\textwidth]{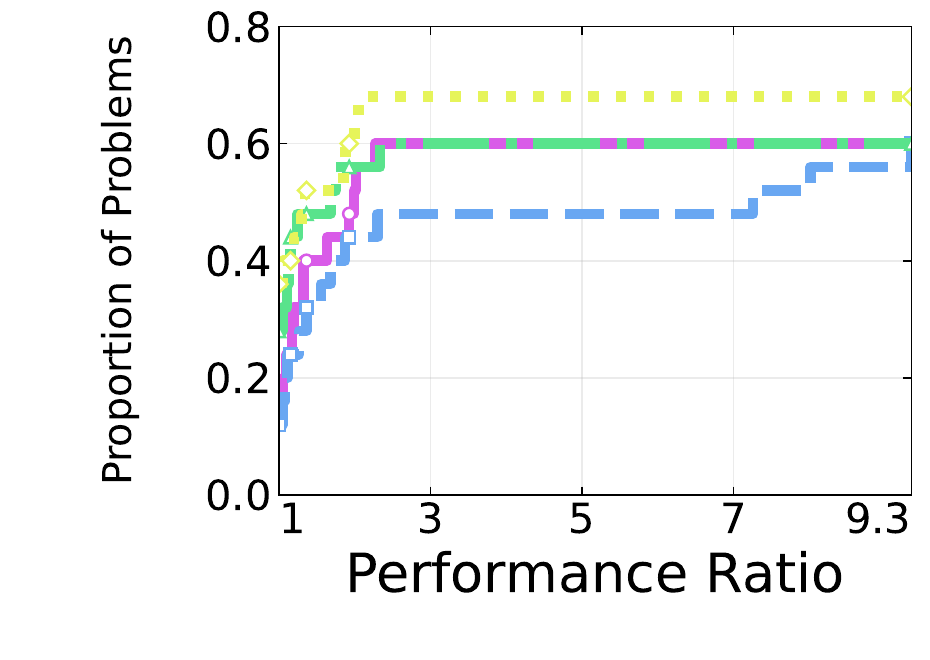} &
\includegraphics[width=0.215\textwidth]{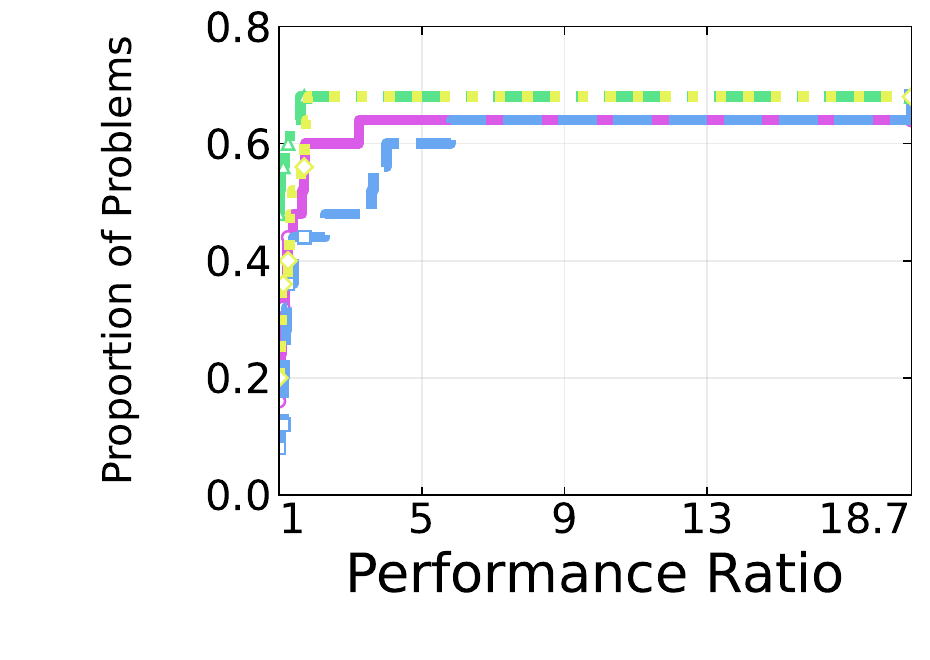} &
\includegraphics[width=0.215\textwidth]{figures/cutest/performance_profile_t_increasing_batch_p_1p8.pdf}
\end{tabular}
}

\vspace{0.35em}

\includegraphics[width=0.50\textwidth]{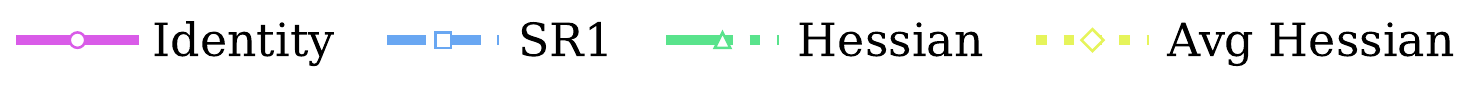}

\caption{\textit{Performance profiles for CUTEst problems under Pareto and Student-$t$ heavy-tailed noise. Rows correspond to noise distributions and batch-size settings, while columns correspond to moment orders $\pp \in \{1.2,1.4,1.6,1.8\}$. The cost is the stopping iteration needed to reach the KKT tolerance.}}
\label{fig:app_cutest_profiles}
\end{figure}

\subsection{Constrained logistic regression}\label{app:logistic}

\subsubsection{Detailed experimental setup}\label{app:logistic_setup}

We consider a synthetic constrained logistic regression problem. For each dimension $d\in\{10,30,50,100\}$, we set $\tx = (0,\frac{1}{d-1},\frac{2}{d-1},\ldots,1)^\top$. Each sample is denoted by $\xi=(\ba,y)$, where $\ba \in \mR^d$ is a covariate vector and $y \in \{-1,1\}$ is a binary label. Conditional on $\ba$, the label is~generated according to the logistic model
\begin{equation*}
\PP(y=1\mid \ba)=\frac{1}{1+\exp(-\ba^\top \tx)},\qquad \PP(y=-1\mid \ba)=1-\PP(y=1\mid \ba).
\end{equation*}
The sample loss is
\begin{equation*}
F(\bx;\xi)=\log\left(1+\exp(-y\ba^\top \bx)\right)+\frac{\Theta}{2}\|\bx-\tx\|^2,
\end{equation*}
where we use the centered ridge parameter $\Theta = 0.05$. The centering ensures that the regularization term does not move the data-generating parameter $\tx$ away from being a KKT point of the population problem. The corresponding sample gradient is
\begin{equation*}
\nabla F(\bx;\xi)=-\frac{y\ba}{1+\exp(y\ba^\top \bx)}+\Theta(\bx-\tx).
\end{equation*}
For the estimated-Hessian and averaged-Hessian variants, we form mini-batch Hessian estimates by averaging the sample Hessian
\begin{equation*}
\frac{\exp(y\ba^\top \bx)}{\left(1+\exp(y\ba^\top \bx)\right)^2}\ba\ba^\top+\Theta \bI.
\end{equation*}

We consider three covariate designs. The Gaussian design has i.i.d. standard normal coordinates and serves as a light-tailed baseline. For the heavy-tailed designs, we use the same symmetric Pareto-type and centered Student-$t$ distributions as in the CUTEst experiments, with parameter $\pp\in\{1.2,1.4,1.6,1.8\}$. We slightly abuse notation here by using $\pp$ as the tail parameter of the covariate distribution. For these distributions, moments of order $q<\pp$ are finite, while the boundary moment need not be finite. Thus, the $\pp$ values in the logistic-regression tables refer to the heavy-tailed covariate parameters. Smaller $\pp$ corresponds to heavier tails. The covariates are mean-zero, and the feature~scale is set~to~one.

The constraints are linear and deterministic: $c(\bx)=\bA\bx-\bb$, where $\bA\in\mR^{5\times d}$ is a full-row-rank Gaussian matrix and $\bb=\bA\tx$. Thus $\tx$ is feasible by construction. For each dimension $d$, the same matrix $\bA$ is used across all distributions, tail parameters, $B_k$ constructions, batch-size regimes, and random repetitions.

For $d\in\{10,30,50\}$, we compare the same four choices of $B_k$ as in the CUTEst experiments: the identity matrix, an SR1 update, an estimated Hessian, and an averaged estimated Hessian. At $d=100$, we compare these four choices together with a diagonal truncated-Fisher (TF) model. Each choice is evaluated under both the Online and Increasing-Batch regimes.

The diagonal truncated-Fisher model is motivated by Fisher/Gauss--Newton curvature~\citep{Martens2020NaturalGradient}. It uses the same samples as the stochastic gradient and requires no additional stochastic first-order oracle evaluations. Constructing this model costs $O(N_kd)$, while storage and matrix--vector products require $O(d)$ space and work, respectively. By comparison, forming a dense Hessian estimate costs $O(N_kd^2)$, with $O(d^2)$ storage and matrix--vector product costs.

At $d=100$, we will additionally compare with four existing competing methods: Adaptive Sampling SQP (PAIS-SQP)~\citep{Berahas2022Adaptive}, Projected Clipped SGD (Proj-ClipSGD), the truncated Polyak-momentum penalty method (Lu-TPM)~\citep{Lu2026VarianceReduced}, and the momentum-based linearized augmented Lagrangian method (MLALM)~\citep{Shi2026Momentum}. For Proj-ClipSGD, we adapt the clipped-gradient method of \citet{Nguyen2023Clipped} to the affine equality constraints by clipping the stochastic gradient, taking a gradient step, and projecting onto the feasible set, without momentum. For this comparison, we perform five independent runs for each method and covariate setting, with an iteration budget of $10^4$ per run.

For TR-SSQP, we use $\Delta_0=1$, $\nu_0=1$, $\theta=0.5$, and KKT tolerance $10^{-4}$, with a maximum iteration budget of $10^4$. The TR-SSQP variants use a burn-in period of $100$ iterations. At the end of this period, the gradient momentum and Hessian-construction histories are reset, while the iterate and the global schedules for $\Delta_k$, $\nu_k$, and $N_k$ are not reset.

For TR-SSQP, we consider two batch-size regimes. In the online setting, we use $a_1=1$, $a_2=0.5$, $a_3=0$, and $N_0=1$, so $N_k=1$ for all $k$. In the increasing-batch setting, we use $a_2=0.5$, $a_3=0.75$, and $N_0=1$, with the batch size capped at $500$. We set $a_1=0.8$ for the Gaussian design and for Pareto/Student-$t$ parameters $\pp\in\{1.4,1.6,1.8\}$. For the heaviest-tailed case $\pp=1.2$, we use the~more conservative value $a_1\approx 0.906$.

For the terminal-residual comparisons with $d\in\{10,30,50\}$, we perform $5$ independent runs for each combination of dimension, covariate distribution, tail parameter, $B_k$ construction, and batch-size regime. Since the Gaussian design has no tail parameter, the number of distribution/parameter settings is $1 + 4 + 4 = 9$, yielding $3\times 9\times 4\times 2\times 5 = 1080$ runs for Tables~\ref{tab:logistic_results} and~\ref{tab:logistic_results_d10_d30}. The convergence plots for $d=10$ report averages over $25$ independent runs.

The KKT residual is computed using an independent evaluation set of size $5000$, which is used only for diagnostics and stopping. Let $\nabla f_{\rm eval}(\bx)$ denote the evaluation-set gradient. Since the constraints are linear, we compute the least-squares multiplier and the corresponding KKT residual~as
\begin{equation*}
\blambda(\bx)=-(\bA\bA^\top)^{-1}\bA \nabla f_{\rm eval}(\bx), \qquad \left(\|\nabla f_{\rm eval}(\bx)+\bA^\top\blambda(\bx)\|^2+\|\bA\bx-\bb\|^2\right)^{1/2}.
\end{equation*}
The tables report this KKT residual at the terminal iterate, averaged over five runs.

\subsubsection{Additional results and discussion}\label{app:logistic_results}

$\bullet$ \textbf{Effect of batching and curvature.}
Tables~\ref{tab:logistic_results} and~\ref{tab:logistic_results_d10_d30} summarize the constrained logistic-regression results for $d\in\{10,30,50\}$. This synthetic setting is complementary to the CUTEst benchmark study in two ways: it includes a Gaussian covariate design as a more benign stochastic baseline, and it allows us to examine how the observed behavior changes as the problem dimension $d$ varies.

\begin{table*}[t!]
\centering
\resizebox{\linewidth}{!}{
\begin{tabular}{|c|c|c|c|c|c|c|c|c|c|c|}
\hline
\multirow{2}{*}{$d$}
& \multirow{2}{*}{Noise}
& \multirow{2}{*}{$\pp$}
& \multicolumn{2}{c|}{Identity}
& \multicolumn{2}{c|}{SR1}
& \multicolumn{2}{c|}{Estimated Hessian}
& \multicolumn{2}{c|}{Averaged Hessian} \\
\cline{4-11}
& & &
Online & Batch
& Online & Batch
& Online & Batch
& Online & Batch \\
\hline
\multirow{9}{*}{10}
& Gaussian & --
& 1.79 & \textbf{1.27} & 1.76 & 1.57 & 1.74 & 1.30 & 1.54 & 1.30 \\
\cline{2-11}
& \multirow{4}{*}{Pareto} & $1.2$
& 22.10 & \textbf{16.02} & 39.30 & 18.84 & 40.37 & 35.69 & 35.26 & 22.80 \\
\cline{3-11}
& & $1.4$
& 10.75 & 5.54 & 5.40 & \textbf{3.59} & 11.64 & 9.73 & 9.05 & 9.66 \\
\cline{3-11}
& & $1.6$
& 5.59 & 5.38 & 4.39 & 5.77 & 5.45 & 5.92 & 5.31 & \textbf{4.18} \\
\cline{3-11}
& & $1.8$
& 3.30 & 4.27 & 3.04 & 4.79 & 4.76 & \textbf{2.63} & 4.72 & 3.32 \\
\cline{2-11}
& \multirow{4}{*}{\shortstack{Student\\$t$}} & $1.2$
& \textbf{26.54} & 76.87 & 47.75 & 26.86 & 28.34 & 28.85 & 38.20 & 69.54 \\
\cline{3-11}
& & $1.4$
& 5.52 & 15.79 & 18.64 & \textbf{5.15} & 18.04 & 14.96 & 22.82 & 10.44 \\
\cline{3-11}
& & $1.6$
& 4.22 & 3.28 & \textbf{2.76} & 3.72 & 6.14 & 3.30 & 4.01 & 2.77 \\
\cline{3-11}
& & $1.8$
& 3.71 & 2.91 & 3.82 & 3.00 & 4.92 & 3.27 & \textbf{2.50} & 2.63 \\
\hline
\multirow{9}{*}{30}
& Gaussian & --
& 3.40 & 2.52 & 3.64 & 2.52 & 3.43 & \textbf{2.41} & 3.55 & 2.50 \\
\cline{2-11}
& \multirow{4}{*}{Pareto} & $1.2$
& 37.34 & 43.77 & 379.07 & 343.93 & \textbf{36.03} & 363.82 & 57.77 & 344.04 \\
\cline{3-11}
& & $1.4$
& 30.87 & 7.04 & 31.46 & \textbf{6.44} & 10.35 & 9.17 & 12.36 & 11.71 \\
\cline{3-11}
& & $1.6$
& 7.77 & 53.94 & 9.19 & 53.86 & 7.61 & \textbf{5.02} & 8.07 & 5.64 \\
\cline{3-11}
& & $1.8$
& 7.75 & 5.37 & 7.79 & 4.80 & 8.19 & 4.81 & 6.29 & \textbf{4.65} \\
\cline{2-11}
& \multirow{4}{*}{\shortstack{Student\\$t$}} & $1.2$
& 53.65 & 35.94 & 16.66 & 20.18 & 33.42 & 16.57 & 24.19 & \textbf{7.60} \\
\cline{3-11}
& & $1.4$
& 15.74 & 6.87 & 11.62 & 7.56 & 15.40 & \textbf{6.59} & 8.51 & 13.04 \\
\cline{3-11}
& & $1.6$
& 8.12 & \textbf{4.22} & 8.10 & 4.28 & 7.17 & 5.07 & 8.14 & 6.01 \\
\cline{3-11}
& & $1.8$
& 9.37 & 4.16 & 5.94 & 6.10 & 7.32 & 7.03 & 8.28 & \textbf{4.01} \\
\hline
\end{tabular}
}

\caption{\textit{KKT residuals ($10^{-2}$) for the synthetic constrained logistic regression experiment with $d=10, 30$. Each entry reports the mean KKT residual over five runs. "Batch" denotes the increasing-batch regime. The smallest number on each row is shown in bold. The Gaussian design serves as a light-tailed baseline, while the symmetric Pareto and Student-$t$ designs vary the degree of heavy-tailedness via $\pp$.}}
\label{tab:logistic_results_d10_d30}
\end{table*}

\begin{figure}[p]
\centering

\begin{minipage}[t]{0.34\textwidth}
\centering
\includegraphics[width=\linewidth]{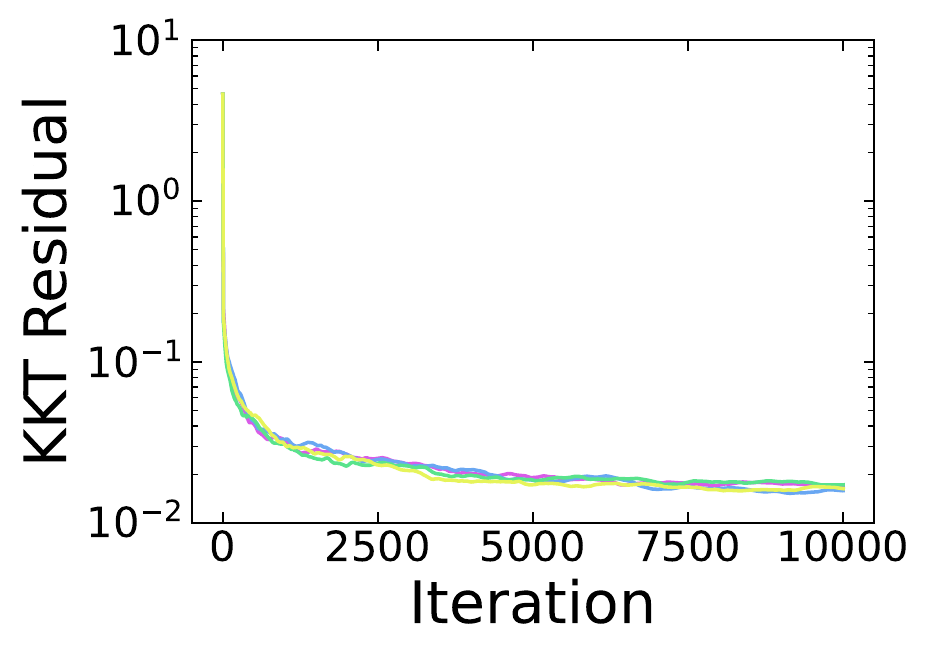}\\
{\footnotesize (a) Gaussian, online}
\end{minipage}%
\hspace{0.04\textwidth}%
\begin{minipage}[t]{0.34\textwidth}
\centering
\includegraphics[width=\linewidth]{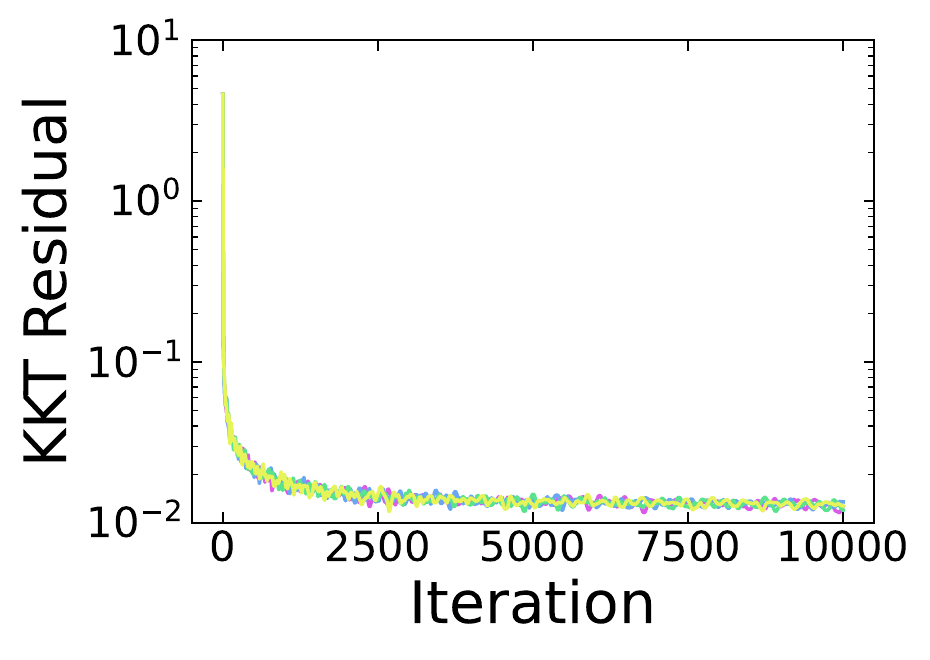}\\
{\footnotesize (b) Gaussian, increasing batch}
\end{minipage}
\par
\includegraphics[width=0.45\textwidth]{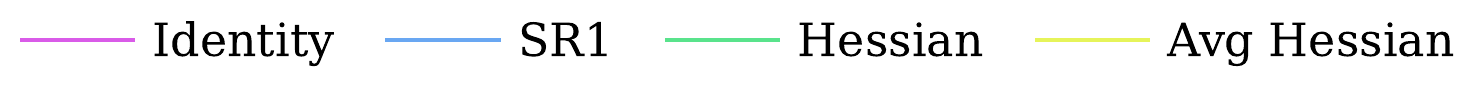}

\caption{\textit{Convergence plots for the synthetic constrained logistic regression experiment with $d=10$ under the Gaussian covariate design. Each curve shows the KKT residual averaged over $25$ runs.}}
\label{fig:app_logistic_gaussian_convergence}

\par\medskip

{
\setlength{\tabcolsep}{3pt}
\renewcommand{\arraystretch}{1}
\begin{tabular}{@{}>{\centering\arraybackslash}m{0.022\textwidth}*{4}{>{\centering\arraybackslash}m{0.18\textwidth}}@{}}
& {\scriptsize $\pp=1.2$} & {\scriptsize $\pp=1.4$} & {\scriptsize $\pp=1.6$} & {\scriptsize $\pp=1.8$} \\[2pt]
\raisebox{7.65pt}[0pt][0pt]{\rotatebox[origin=c]{90}{\tiny Pareto, online}} &
\includegraphics[width=\linewidth]{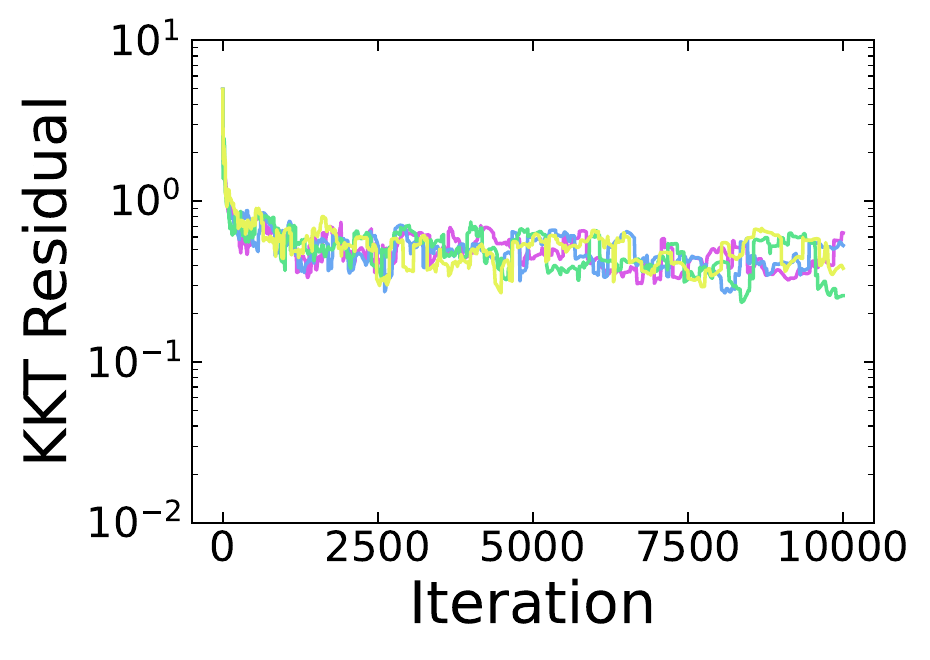} &
\includegraphics[width=\linewidth]{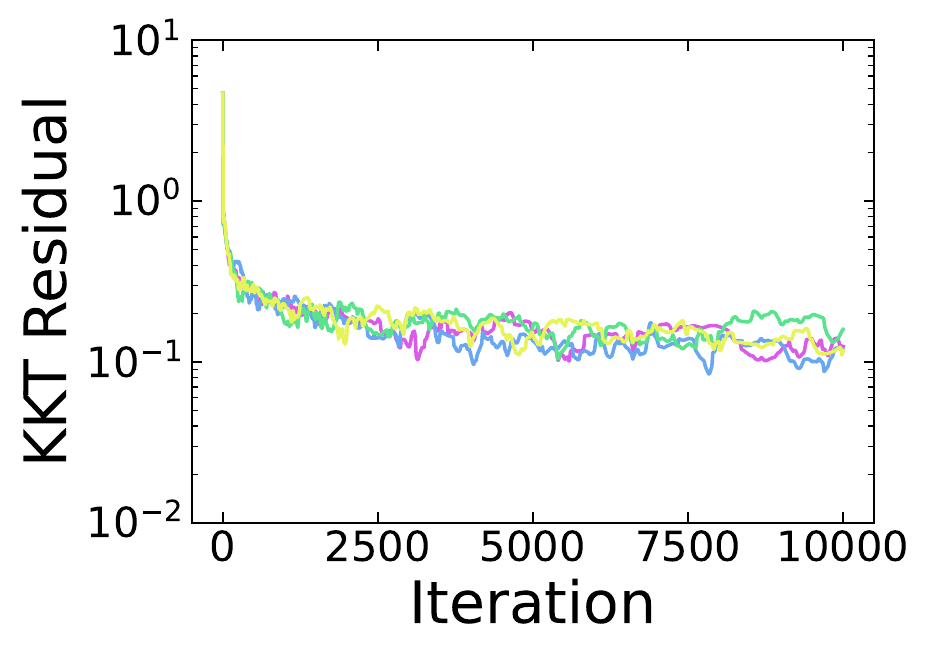} &
\includegraphics[width=\linewidth]{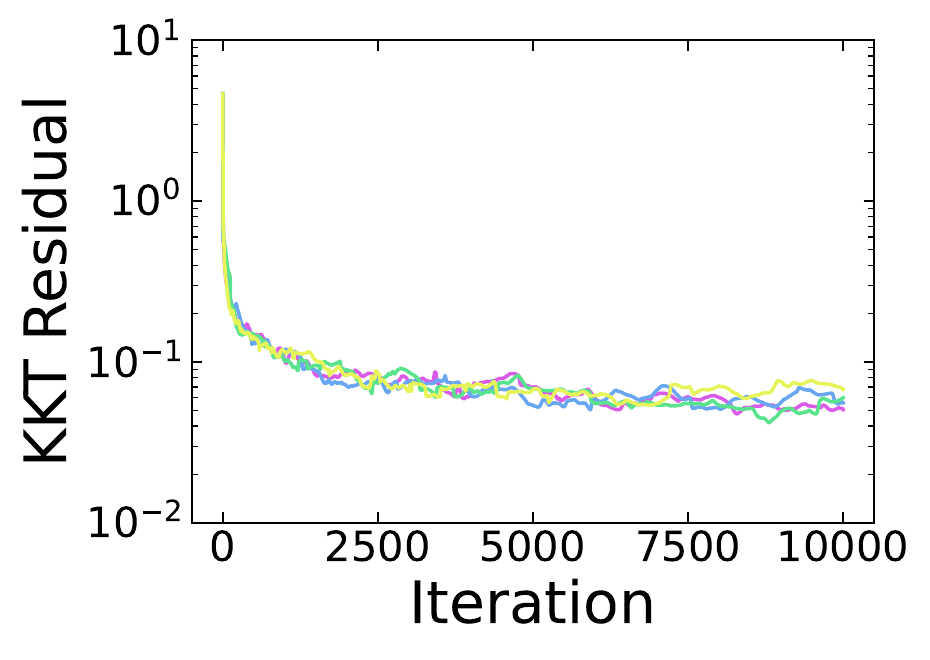} &
\includegraphics[width=\linewidth]{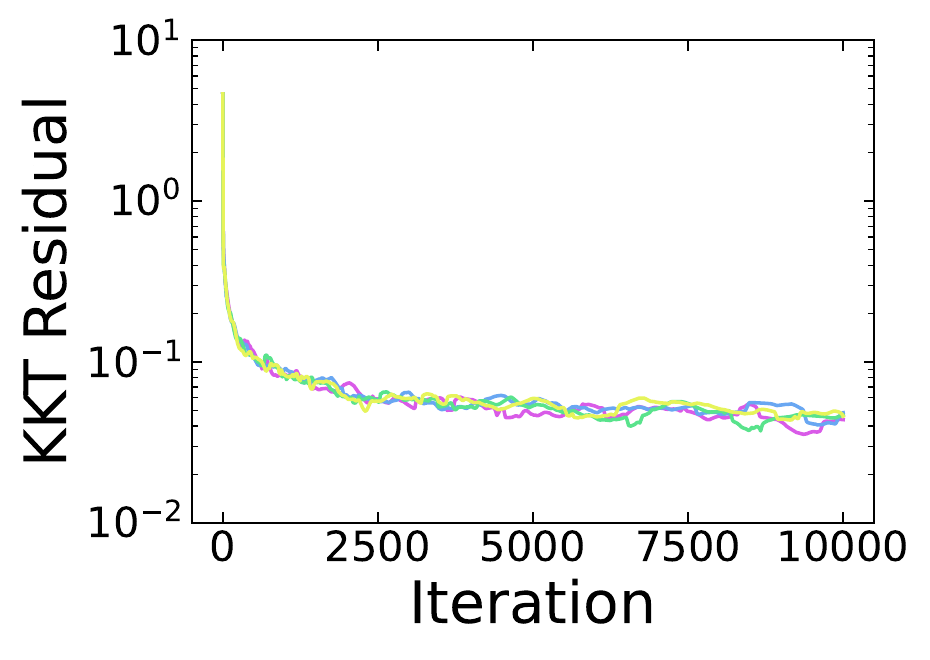} \\\noalign{\vskip12pt}
\raisebox{7.65pt}[0pt][0pt]{\rotatebox[origin=c]{90}{\tiny Pareto, increasing batch}} &
\includegraphics[width=\linewidth]{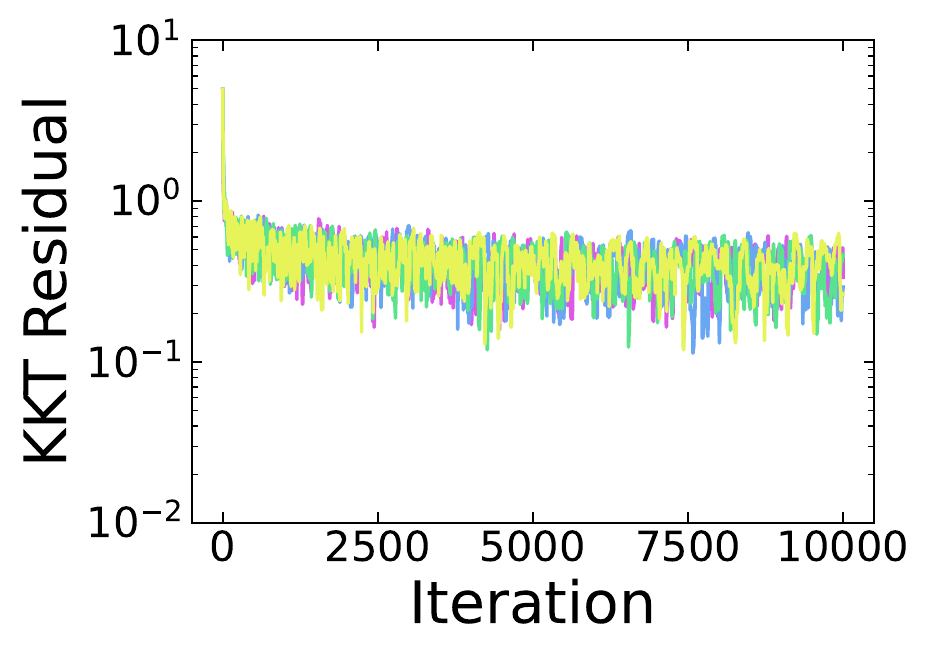} &
\includegraphics[width=\linewidth]{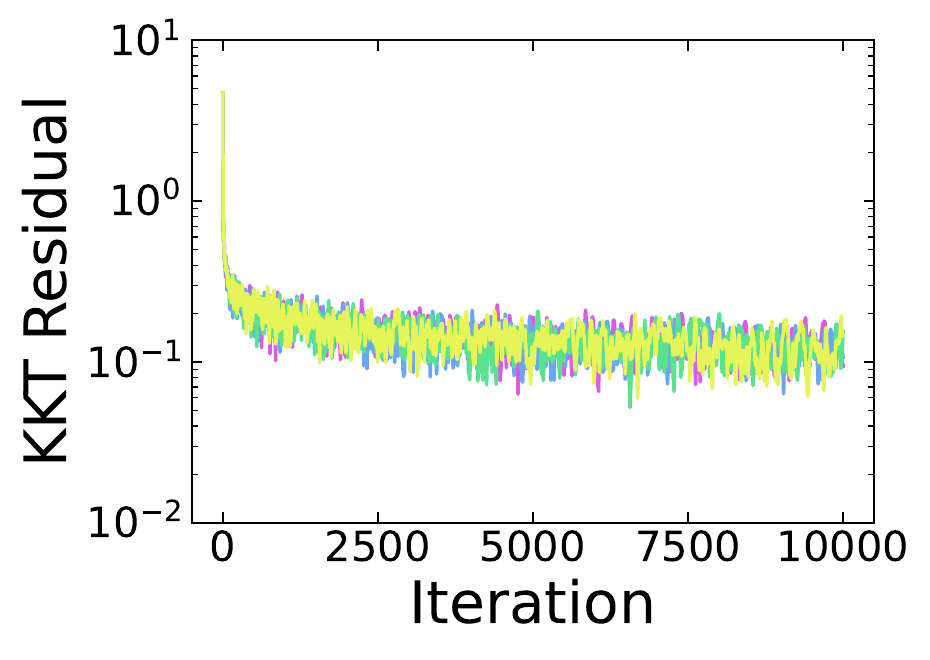} &
\includegraphics[width=\linewidth]{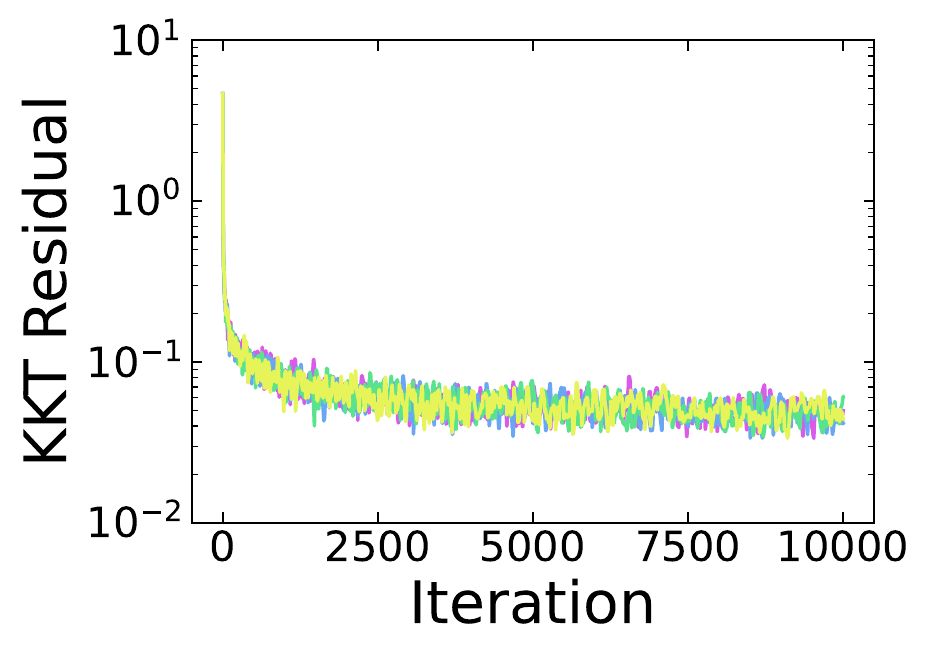} &
\includegraphics[width=\linewidth]{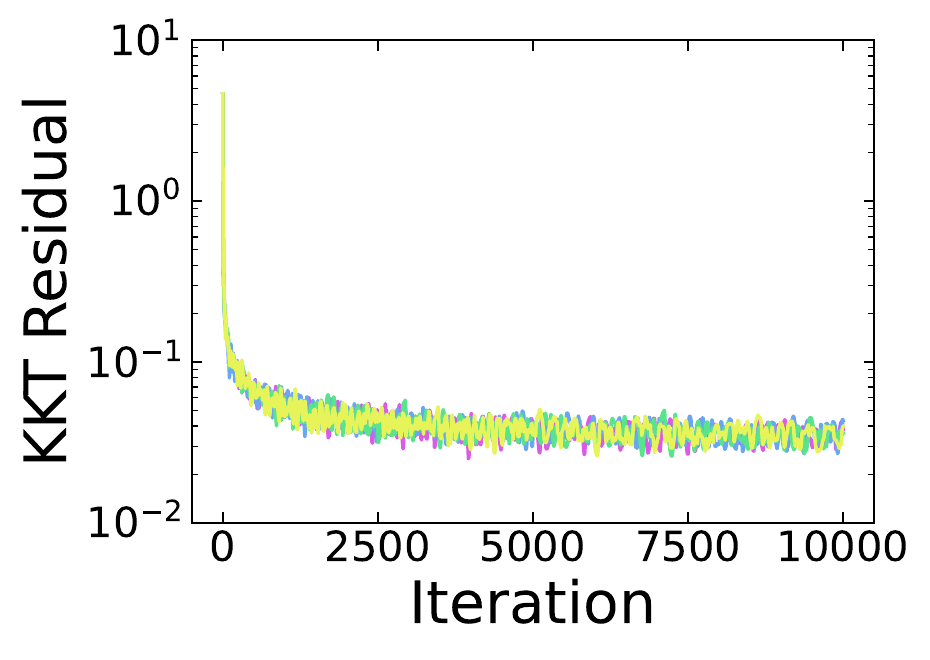} \\\noalign{\vskip12pt}
\raisebox{7.65pt}[0pt][0pt]{\rotatebox[origin=c]{90}{\tiny Student-$t$, online}} &
\includegraphics[width=\linewidth]{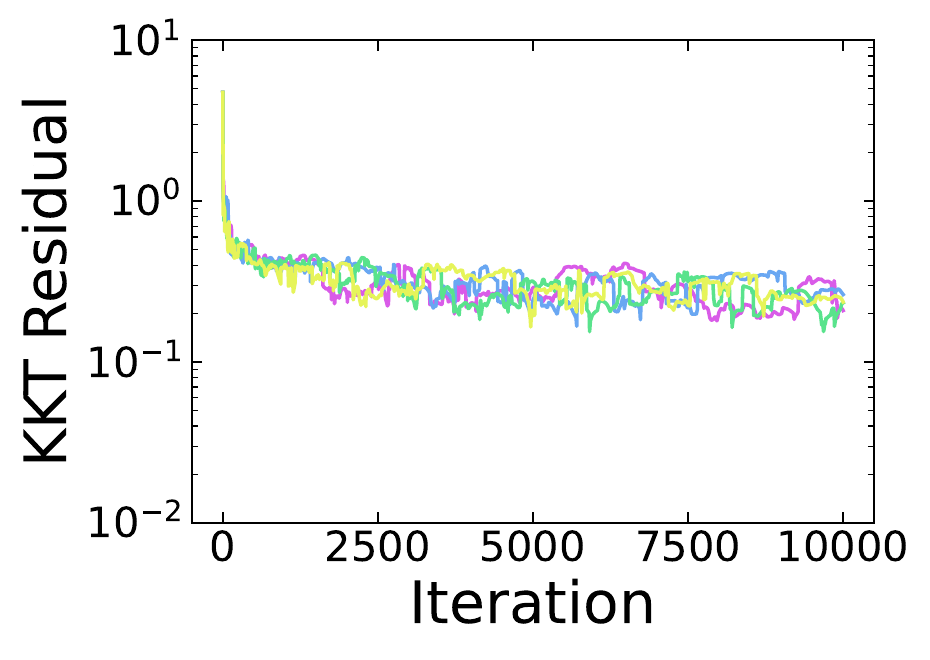} &
\includegraphics[width=\linewidth]{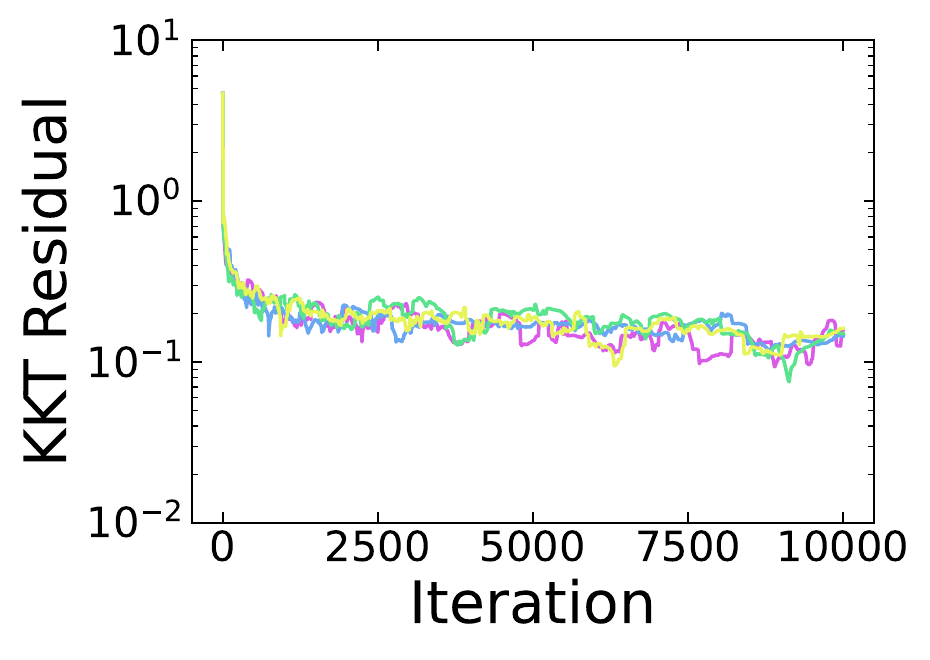} &
\includegraphics[width=\linewidth]{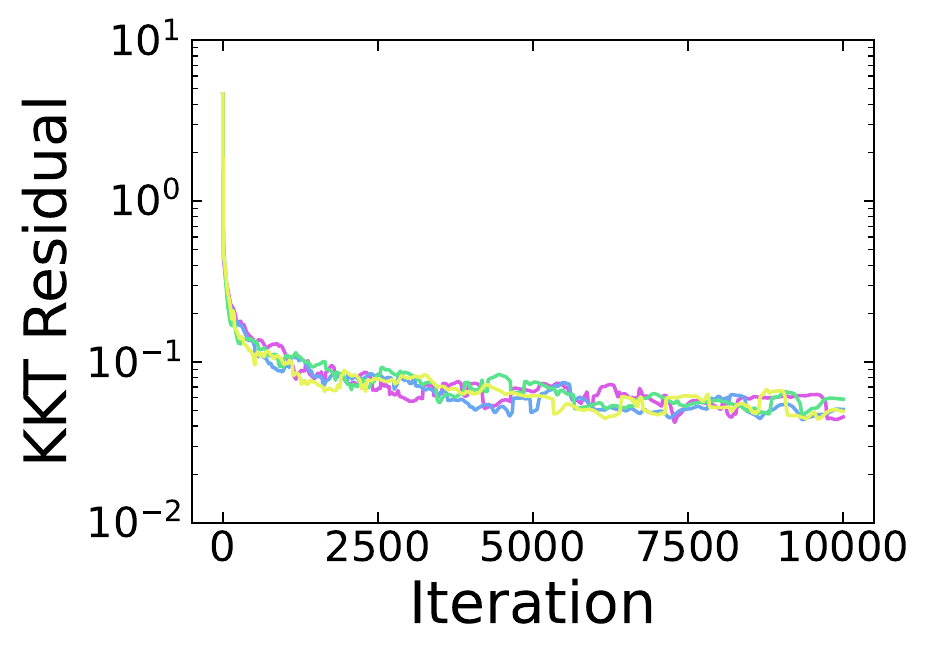} &
\includegraphics[width=\linewidth]{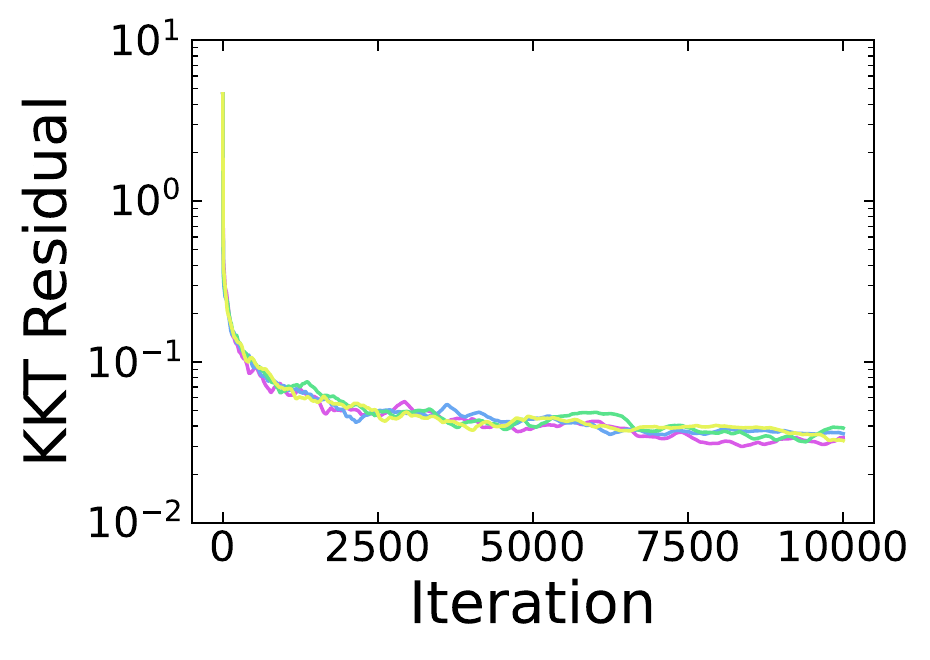} \\\noalign{\vskip12pt}
\raisebox{7.65pt}[0pt][0pt]{\rotatebox[origin=c]{90}{\tiny Student-$t$, increasing batch}} &
\includegraphics[width=\linewidth]{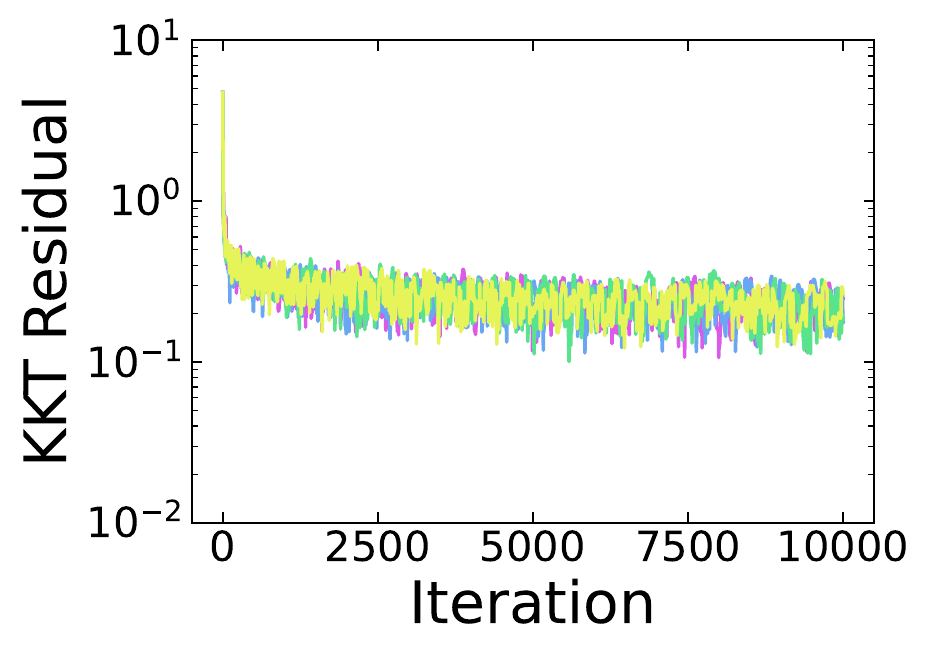} &
\includegraphics[width=\linewidth]{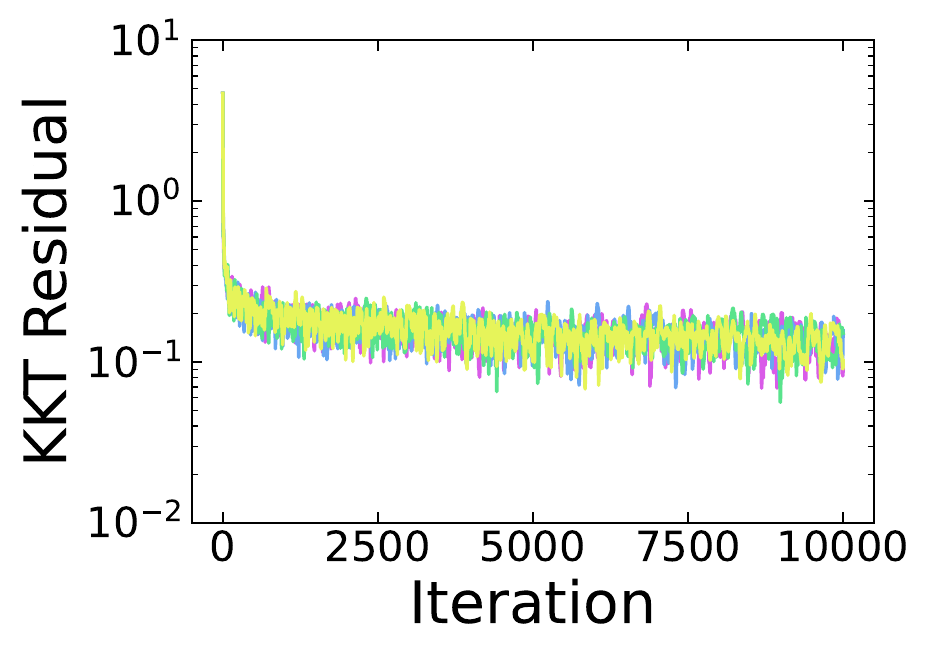} &
\includegraphics[width=\linewidth]{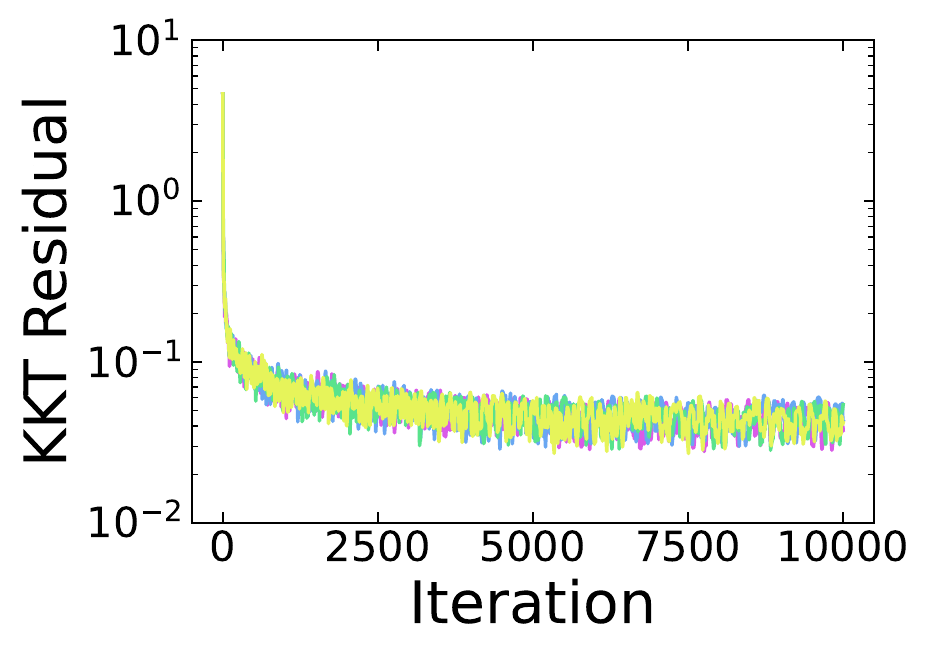} &
\includegraphics[width=\linewidth]{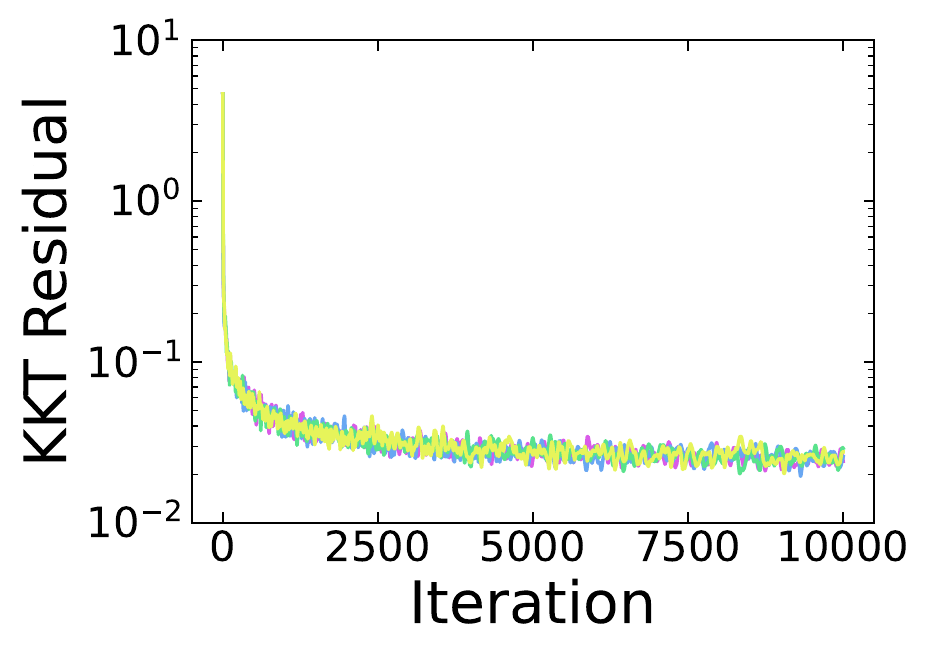}
\end{tabular}
}
\par
\includegraphics[width=0.45\textwidth]{figures/logistic/logistic_convergence_legend.pdf}

\caption{\textit{Convergence plots for the synthetic constrained logistic regression experiment with $d=10$ under symmetric Pareto and Student-$t$ covariate designs. Each curve shows the KKT residual averaged over $25$ independent runs.}}
\label{fig:app_logistic_heavytail_convergence}
\end{figure}

For the Gaussian design, batching improves the KKT residuals for every choice of $B_k$ and for each dimension $d$ reported in Tables~\ref{tab:logistic_results} and~\ref{tab:logistic_results_d10_d30}. The Hessian-based choices are also best or nearly best for the Gaussian design, especially in the larger-dimensional cases $d=30$ and $d=50$. Thus, even in this more benign stochastic setting, where the gradients are much less extreme than in the Pareto and Student-$t$ designs, increasing the number of samples per iteration improves the quality of the computed steps, and curvature information remains useful.

The heavy-tailed designs show a stronger dependence on the interaction between batching and curvature. For $d=50$, curvature-aware choices of $B_k$ attain the lowest KKT residuals in most Pareto and Student-$t$ settings under batching. This includes the heaviest Pareto cases, where the SR1 variant gives the smallest residuals for $\pp=1.2$ and $\pp=1.4$, and the estimated-Hessian variant gives the smallest residuals for $\pp=1.6$ and $\pp=1.8$. For Student-$t$ covariates, the averaged-Hessian and SR1 variants are also competitive under batching, especially for smaller values of $\pp$. These results indicate that once the stochastic information is averaged over a larger mini-batch, the curvature models can better guide the tangential trust-region step.

The dimension comparison gives us a complementary view. An increasing-batch variant attains the smallest residual in six of the nine settings at $d=10$, eight at $d=30$, and all nine at $d=50$. Thus, the benefit of batching is more consistent at larger $d$ in these experiments. The number of equality constraints is fixed at $m=5$, so increasing $d$ enlarges the tangent space in which the curvature model can affect the step. At the same time, the online regime remains a useful baseline across different dimensions: it often produces reasonable residuals with only one stochastic gradient sample per iteration, and the identity model is more competitive there because the curvature estimates are based on noisier information.

To complement the terminal-residual summaries, Figures~\ref{fig:app_logistic_gaussian_convergence} and~\ref{fig:app_logistic_heavytail_convergence} show the iteration-level KKT residual trajectories for $d=10$, averaged over $25$ independent runs to smooth out variability across random repetitions. These plots help distinguish the final accuracy reported in the tables from the convergence behavior along the run. In the Gaussian case, the increasing-batch trajectories are more favorable across the four choices of $B_k$, consistent with the table results. Under Pareto and Student-$t$ covariates, the trajectories also illustrate why curvature information is most effective when combined with larger mini-batches. In particular, the residuals can fluctuate substantially, whereas increasing batch sizes make the curvature-based steps more stable.

$\bullet$ \textbf{Comparison with existing methods on a larger-scale learning problem.}
Table~\ref{tab:logistic_d100} reports the results for all fourteen methods at $d=100$. Each entry is $100$ times the mean terminal KKT residual over five runs. The smallest entries in each row are shown in bold, with ties determined at the reported precision.

\begin{sidewaystable}[p]
\centering
\resizebox{\linewidth}{!}{%
\begin{tabular}{|c|c|*{14}{c|}}
\hline
\multirow{2}{*}{Noise}
& \multirow{2}{*}{$\pp$}
& \multicolumn{2}{c|}{Identity}
& \multicolumn{2}{c|}{SR1}
& \multicolumn{2}{c|}{Estimated Hessian}
& \multicolumn{2}{c|}{Averaged Hessian}
& \multicolumn{2}{c|}{Truncated Fisher}
& \multirow{2}{*}{PAIS-SQP}
& \multirow{2}{*}{Proj-ClipSGD}
& \multirow{2}{*}{Lu-TPM}
& \multirow{2}{*}{MLALM} \\
\cline{3-12}
& &
Online & Batch
& Online & Batch
& Online & Batch
& Online & Batch
& Online & Batch
& & & & \\
\hline
Gaussian & --
& 14.43 & \textbf{3.65}
& 14.43 & \textbf{3.65}
& 14.16 & \textbf{3.65}
& 14.18 & \textbf{3.65}
& 14.12 & \textbf{3.65}
& 4.48 & 4.39 & 37.03 & 16.61 \\
\hline
\multirow{4}{*}{Pareto} & $1.2$
& 180.51 & 13.59
& 176.68 & 13.67
& 174.03 & 13.51
& 173.97 & 13.50
& 173.97 & \textbf{13.45}
& 1327.08 & 44.10 & 119.84 & 490.76 \\
\cline{2-16}
& $1.4$
& 99.65 & 18.16
& 99.47 & \textbf{17.43}
& 97.37 & 224.65
& 97.37 & 18.18
& 97.37 & 225.33
& 321.74 & 23.34 & 102.55 & 287.05 \\
\cline{2-16}
& $1.6$
& 55.42 & \textbf{8.16}
& 56.56 & \textbf{8.16}
& 53.37 & \textbf{8.16}
& 52.78 & \textbf{8.16}
& 52.85 & \textbf{8.16}
& 27.48 & 11.37 & 75.82 & 21.16 \\
\cline{2-16}
& $1.8$
& 48.25 & \textbf{7.13}
& 48.31 & \textbf{7.13}
& 49.02 & \textbf{7.13}
& 48.83 & \textbf{7.13}
& 47.94 & \textbf{7.13}
& 15.18 & 7.97 & 68.72 & 18.33 \\
\hline
\multirow{4}{*}{\shortstack{Student\\$t$}} & $1.2$
& 119.58 & \textbf{11.74}
& 119.17 & 14.67
& 115.56 & 11.82
& 115.56 & 13.74
& 115.54 & 11.82
& 333.44 & 65.42 & 138.51 & 498.84 \\
\cline{2-16}
& $1.4$
& 61.50 & 14.18
& 61.20 & 14.15
& 59.86 & \textbf{9.59}
& 59.91 & \textbf{9.59}
& 58.14 & 14.18
& 69.64 & 10.63 & 79.99 & 36.54 \\
\cline{2-16}
& $1.6$
& 55.77 & \textbf{9.09}
& 54.98 & \textbf{9.09}
& 55.68 & \textbf{9.09}
& 55.62 & \textbf{9.09}
& 55.85 & \textbf{9.09}
& 40.07 & 14.56 & 67.77 & 31.44 \\
\cline{2-16}
& $1.8$
& 39.08 & \textbf{6.54}
& 38.78 & \textbf{6.54}
& 37.51 & \textbf{6.54}
& 37.75 & \textbf{6.54}
& 38.01 & \textbf{6.54}
& 26.98 & 7.70 & 61.16 & 20.42 \\
\hline
\end{tabular}%
}

\caption{\textit{KKT residuals ($10^{-2}$) for the synthetic constrained logistic regression experiment with $d=100$. Each entry reports the mean terminal KKT residual over five runs with an iteration budget of $10^4$ per run. "Batch" denotes the increasing-batch regime. The smallest entries in each row are shown in bold, with ties determined at the reported precision. The Gaussian design serves as a light-tailed baseline, while the symmetric Pareto and Student-$t$ designs vary the degree of heavy-tailedness via $\pp$.}}
\label{tab:logistic_d100}
\end{sidewaystable}

For each of the nine settings in Table~\ref{tab:logistic_d100}, the smallest mean terminal KKT residual is attained by an Increasing-Batch TR-SSQP variant. TF-Batch achieves a lower residual than all four existing methods in seven of the nine settings; the exceptions are Pareto $\pp=1.4$ and Student-$t$ $\pp=1.4$.

The comparison across the choices of $B_k$ also shows that TF-Online gives residuals close to those of the SR1, estimated-Hessian, and averaged-Hessian variants in all nine settings. Under increasing batches, TF matches or closely approaches the best residual in seven settings, but is less effective for Pareto $\pp=1.4$ and Student-$t$ $\pp=1.4$. Thus, the TF model provides a less expensive alternative to the Hessian-based constructions while achieving comparable terminal residuals in most of the tested settings.

Overall, the logistic-regression results provide additional support for the conclusions drawn from the CUTEst experiments. The increasing-batch setting helps reduce the KKT residuals for the Gaussian baseline and in many heavy-tailed settings. Moreover, curvature-aware choices of $B_k$ are often more effective when combined with larger mini-batches. The comparisons at $d=100$ further demonstrate the promising performance of TR-SSQP relative to the existing competing methods, with an increasing-batch variant attaining the smallest residual in every setting.

\end{document}